\newcommand{\Cc}{\mathbb{C}} 
\newcommand{\Pp}{\mathbb{P}}
\newcommand{\Rr}{\mathbb{R}}
\newcommand{\Nn}{\mathbb{N}}
\newcommand{\Zz}{\mathbb{Z}}
\newcommand{\Qq}{\mathbb{Q}}
\newcommand{\Ff}{\mathbb{F}}
\newcommand{\Aa}{\mathbb{A}}
\renewcommand {\leq}{\leqslant}
\renewcommand {\geq}{\geqslant}
\newcommand{\point}{^\ast}
\newcommand {\xbar}{\underline{x}}
\newcommand {\lambdabar}{{\underline{\lambda}}}
\newcommand {\charac}{\mathop{\mathrm{char}}\nolimits}
\theoremstyle{plain}
\newtheorem{theorem}{Theorem}[section]    
\newtheorem{lemma}[theorem]{Lemma}       
\newtheorem{proposition}[theorem]{Proposition}      
\newtheorem{corollary}[theorem]{Corollary}      
\theoremstyle{remark}
\newtheorem{definition}[theorem]{Definition}      
\newtheorem*{remark*}{Remark}  
\newtheorem{remark}[theorem]{Remark}   
\def\degP{\rho}
\title[The Schinzel Hypothesis for Polynomials]{The Schinzel Hypothesis for Polynomials}
\author{Arnaud Bodin}
\author{Pierre D\`ebes}
\author{Salah Najib}
\email{arnaud.bodin@univ-lille.fr}
\email{pierre.debes@univ-lille.fr}
\email{slhnajib@gmail.com}
\address{Laboratoire Paul Painlev\'e, Math\'ematiques, Cit\'e Scientifique, Universit\'e de Lille, 59655 Villeneuve d'Ascq Cedex, France}
\address{Laboratoire Paul Painlev\'e, Math\'ematiques, Cit\'e Scientifique, Universit\'e de Lille, 59655 Villeneuve d'Ascq Cedex, France}
\address{Laboratoire ATRES, Facult\'e Polydisciplinaire de Khouribga, Universit\'e Sultan Moulay Slimane,
BP 145, Hay Ezzaytoune, 25000 Khouribga, Maroc.}
\subjclass[2010] {Primary 12E05, 12E25, 12E30; Sec. 11C08, 11N80, 13Fxx}
\keywords{Polynomials, Irreducibility, Specialization, Hilbertian Fields}
\thanks{{\it Acknowledgment}. 
This work was supported in part by the Labex CEMPI  (ANR-11-LABX-0007-01) 
and by the ANR project ``LISA'' (ANR-17-CE40–0023-01).
}
\date{\today}
\begin{document}

\begin{abstract}
The Schinzel hypothesis is a famous conjectural statement about primes
in value sets of polynomials, which generalizes the 
Dirichlet theorem about primes in an arithmetic progression.
We consider the situation that the ring of integers is replaced by a polynomial ring 
and prove the Schinzel hypothesis for a wide class 
of them: polynomials in at least one variable over the integers, polynomials in several variables over an arbitrary field,
 etc.
We achieve this goal by developing a version over rings of the Hilbert 
specialization property.
A polynomial 
Goldbach con\-jec\-ture 
is deduced, along with a result on spectra of rational functions.
\end{abstract}

\maketitle



\section{Introduction} \label{sec:intro}

The so-called Schinzel Hypothesis (H), which builds on an earlier conjecture of Bunyakovsky, 
was stated in  \cite{schinzel}. Consider a set $\underline P=\{P_1,\ldots,P_s\}$ of $s$ polynomials, 
irreducible in $\Zz[y]$, of degree $\geq 1$ and such that 
\vskip 1mm
 
 \noindent
 (*) {there is no prime $p\in \Zz$ dividing all values $\prod_{i=1}^s P_i(m)$, $m\in \Zz$.}
 \vskip 1mm
\noindent
Hypothesis (H) concludes that there are infinitely many $m\in \Zz$ 
such that $P_1(m),\ldots, P_s(m)$ are prime numbers. 
If true, the Schinzel hypothesis would solve many classical problems in number theory:
the twin prime problem (take ${\underline P} =\{y,y+2\}$), the infiniteness 
of primes of the form $y^2+1$ (take $\underline P=\{y^2+1\}$), the Sophie Germain prime problem ($\underline P=\{y, 2y+1\}$), etc.
However it is wide open except for one polynomial $P_1$ of degree one, in which case it
is the Dirichlet theorem about primes in an arithmetic progression.

We consider the situation that the ring $\Zz$ is replaced by a 
polynomial ring $R[\underline x]$ in $n\geq 1$ variables 
over some ring $R$, and 
``prime'' is understood as ``irreducible''. We prove the Schinzel Hypothesis in this situation for a 
wide class of rings $R$, for example $\Zz$, or $k[u]$ with $k$ an arbitrary field.
The  infiniteness of integers $m$ 
is replaced by a degree condition.

\subsection{Main result} \label{ssec:main-result}
Specifically, let $R$ be a Unique Factorization Domain {\rm (UFD)} with fraction field $K$. Our assumptions
include $K$ being a field with the product formula. Definition is recalled in \S \ref{sec:Hilbert}. 
The basic example is $K=\Qq$. The product formula is: $\prod_{p} |a|_p \cdot |a| = 1$
for every $a\in \Qq^\ast$, where $p$ ranges over all prime numbers, $|\cdot|_p$ is the $p$-adic absolute value 
and $|\cdot |$ is the standard absolute value. The rational function field $k(u_1,\ldots,u_r)$ in $r\geq 1$ variables over an arbitrary field $k$
is another example.

Given $n$ indeterminates $x_1,\ldots,x_n$, set $R[\underline x] = R[x_1,\ldots,x_n]$ ($n\geq 0$)\footnote{For $n=0$, we mean $R[\underline x]=R$, which is the original context of Schinzel's hypothesis.}.
Con\-sider $s\geq 1$ polynomials $P_1,\ldots,P_s$, irreducible in $R[\underline x,y]$, of degree $\geq 1$ in $y$.
Set $\underline P=\{P_1,\ldots,P_s\}$ and let ${\mathcal Irr}_{n}(R,\underline P)$ be the set of polynomials 
$M\in R[\underline x]$ such that $P_1(\underline x,M(\underline x)), \ldots, P_s(\underline x,M(\underline x))$  are irreducible in $R[\underline x]$.

For every $n$-tuple $\underline d=(d_1,\ldots,d_n)$ of integers $d_i\geq 0$, 
denote  the set of polynomials $M\in R[\underline x]$ such that $\deg_{x_i}(M) \leq d_i$, $i=1,\ldots,n$, by ${\mathcal Pol}_{R,n,\underline d}$. It is an affine space 
over $R$: the coordinates correspond  to the coefficients.

 \begin{theorem} \label{thm:schinzel1}
Assume that $n\geq 1$ and $R$ is a UFD with fraction field a field $K$ with the product formula, 
imperfect if $K$ is of characteristic $p>0$ {\rm (}i.e. $K^p\not=K${\rm )}.
For every $\underline d\in (\Nn^\ast)^n$ such that $\displaystyle d_1+\cdots+d_n \geq \max_{1\leq i\leq s} \deg_{\underline x}(P_i)+2$, 
\vskip -2pt

\noindent
the set ${\mathcal Irr}_{n}(R,\underline P)$ is Zariski-dense in ${\mathcal Pol}_{R,n,\underline d}$.
\end{theorem}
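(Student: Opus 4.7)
The strategy is to realize the desired $M$ as a specialization of a generic polynomial with indeterminate coefficients, and then apply the ring version of the Hilbert specialization property that the authors have developed earlier in the paper. I expect the proof to proceed in three steps.

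\textbf{Step 1 (generic irreducibility).} Introduce new indeterminates $\underline\lambda = (\lambda_\beta)_{\beta \leq \underline d}$, one for each coefficient, and consider the generic polynomial $M(\underline x, \underline\lambda) = \sum_\beta \lambda_\beta \underline x^{\beta}$. Form the family
$$F_i(\underline x, \underline\lambda) \;=\; P_i\!\left(\underline x,\, M(\underline x, \underline\lambda)\right) \;\in\; R[\underline x, \underline\lambda], \qquad i=1,\ldots,s.$$
I would show that each $F_i$ is irreducible in $R[\underline x, \underline\lambda]$ by exhibiting an $R$-algebra automorphism of $R[\underline x, \underline\lambda]$ sending $F_i$ to $P_i(\underline x, y)$ (which is irreducible by hypothesis). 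Concretely, pick any fixed index $\beta_0$; then the triangular substitution $\lambda_{\beta_0} \mapsto y - \sum_{\beta \neq \beta_0} \lambda_\beta \underline x^{\beta}$ (the other $\lambda_\beta$ unchanged) is such an automorphism after renaming $\lambda_{\beta_0}$ as $y$. Since it preserves irreducibility, $F_i$ is irreducible in $R[\underline x, \underline\lambda]$; by Gauss's lemma it is then irreducible in $K(\underline\lambda)[\underline x]$ as well.

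\textbf{Step 2 (ring-version Hilbert specialization).} Apply the ring-version of Hilbert's specialization property (developed in the previous section of the paper and valid precisely under the hypotheses: $R$ a UFD whose fraction field $K$ is a product-formula field, with $K$ imperfect in positive characteristic) to the family $F_1, \ldots, F_s$. This yields a Zariski-dense set of specialization values $\underline\lambda^{\ast} \in R^{N}$ (with $N = \prod_i (d_i+1)$ the number of coefficients) for which \emph{all} the specialized polynomials $F_i(\underline x, \underline\lambda^{\ast})$ remain irreducible in $R[\underline x]$ simultaneously. Since $R^N$ is identified with the $R$-points of ${\mathcal Pol}_{R,n,\underline d}$ via the coefficient coordinates, such $\underline\lambda^{\ast}$ are exactly elements of ${\mathcal Irr}_n(R, \underline P)$, giving the required Zariski-density.

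\textbf{Main obstacle.} The delicate point is the degree bound $d_1 + \cdots + d_n \geq \rho + 2$, where $\rho = \max_i \deg_{\underline x}(P_i)$. This condition is not needed for the generic irreducibility step above, which is purely formal; rather, it must be dictated by the ring-version Hilbert property of Step 2. The usual failure mode of hilbertian-style specialization over a coefficient ring is that the specialized polynomial acquires a nontrivial content or splits off a ``geometric'' factor absent from the generic fibre; avoiding this requires enough indeterminate coefficients together with enough $\underline x$-degree so that the generic family is genuinely non-isotrivial relative to the base $R$. The product-formula hypothesis is precisely what is needed to handle the content problem over each prime of $R$, and the imperfectness assumption in characteristic $p$ rules out inseparable pathologies. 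Verifying that the lower bound $\rho + 2$ is exactly what the ring-version Hilbert property requires — and checking that the family $\{F_i\}$ of Step 1 satisfies the non-degeneracy hypotheses of that property — is therefore the heart of the proof; the rest is formal bookkeeping.
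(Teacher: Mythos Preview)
Your Step 1 is correct and matches the paper's Lemma~\ref{lemma:first}(a). The gap is in Step 2: the ``ring-version Hilbert specialization property'' you invoke does not exist in the form you state. What the paper proves (Proposition~\ref{prop:I-HilbertUFD} and Theorem~\ref{lemma:hilbert1}) is only that Hilbert subsets of $K^r$ contain tuples $\underline\lambda^\ast$ with coordinates in $R$ such that the specialized polynomials are irreducible in $K[\underline x]$ --- not in $R[\underline x]$. The paper itself gives the counterexample $F=(\lambda^2-\lambda)x+(\lambda^2-\lambda+2)\in\Zz[\lambda,x]$, for which every integer specialization is divisible by $2$; so irreducibility over $R$ cannot be a black-box output of any general Hilbert-type statement.

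Your diagnosis of the obstacle is therefore slightly off. The product-formula hypothesis is \emph{not} what handles the content problem; it is used only to establish that $R$ is a Hilbertian ring (Theorem~\ref{lemma:hilbert1}), which gives irreducibility over $K$. The primitivity of $F_i(\underline\lambda^\ast,\underline x)$ with respect to $R$ is obtained by a separate, ad hoc argument (proof of Theorem~\ref{thm:general}) exploiting the special shape $F_i=P_i(\underline x,M(\underline\lambda,\underline x))$: one uses the congruence refinement in Proposition~\ref{prop:I-HilbertUFD}(iii) to arrange that two particular coordinates $\lambda_1^\ast,\lambda_2^\ast$ are coprime in $R$, where these are the coefficients of two monomials $Q_1,Q_2$ of degrees $d_1+\cdots+d_n$ and $d_1+\cdots+d_n-1$. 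Then, modulo any prime $\pi\in R$, one of $\lambda_1^\ast,\lambda_2^\ast$ survives, forcing $\deg(\overline M)\geq d_1+\cdots+d_n-1>\max_i\deg_{\underline x}(P_i)$; a degree comparison in $(R/\pi)[\underline x]$ then yields the contradiction. This is precisely where the bound $d_1+\cdots+d_n\geq\max_i\deg_{\underline x}(P_i)+2$ enters --- it is needed for the primitivity step, not for the Hilbert specialization step.
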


In particular, the following {\it Schinzel hypothesis for $R[\underline x]$} holds true: 
\vskip 1mm

\noindent
(**)  {\it there exist polynomials $M\in R[\underline x]$ with partial degrees any suitably large integers such that 
 $P_1(\underline x,M(\underline x)), \ldots, P_s(\underline x,M(\underline x))$ are irreducible in $R[\underline x]$
 \footnote{Up to adding $P_0=y$ to the set $\underline P$, one may also require that $M$ be irreducible in $R[\underline x]$.}. }
 \vskip 1mm
 
\noindent
 Irreducibility over $R$ is a main point. As a comparison, the Hilbert speciali\-zation property provides elements $m\in K$ such that 
 $P_1(\underline x,m), \ldots, P_s(\underline x,m)$ are 
irreducible over $K$
 (provided that 
 all $\deg_{\underline x}(P_i)$ are $\geq 1$).
Developing a {\it Hilbert property over rings} will in fact be the core of our approach; we say more about this in \S \ref{ssec:hilbertian_rings}.

Rings $R$ satisfying the assumptions of Theorem \ref{thm:schinzel1} include:
 
\noindent
 (a) the ring $\Zz$ of integers, and more generally, every ring ${\mathcal O}_k$ of integers of a number field $k$ of class number $1$, 
\vskip -0,3mm

\noindent
(b) every polynomial ring $k[u_1,\ldots,u_r]$ with $r\geq 1$ and $k$ an arbitrary field.

\noindent
(c) fields (so $R=K$) with the product formula, imperfect if of character\-istic $p>0$ e.g. $\Qq$, $k(u_1,\ldots,u_r)$ ($r\geq 1$, $k$ arbitrary), their finite extensions.

 As to the analog of assumption (*), it is automatically satisfied under our hypotheses
 (Lemma \ref{lemma:first}).
Our approach also allows the situation that the polynomials $P_i$ have several variables
$y_1,\ldots,y_m$, which leads to a multivariable Schinzel hypothesis for polynomials (Theorem \ref{thm:schinzel3}).

\subsection{Examples} Take $R[\underline x]$ as above and $P_i = b_i(\underline x) \hskip 0,5pt y^{\rho_i} + a_i(\underline x)$ with $\rho_i\in \Nn^\ast$, $a_i,b_i$ relatively prime in $R[\underline x]$ (possibly in $R$) and such that $-a_i/b_i$ satisfies the Capelli con\-dit\-ion that makes $b_iy^{\rho_i} + a_i$ irreducible in $K(\underline x)[y]$, \hbox{i.e.} $-a_i/b_i\notin  K(\underline x)^\ell$ for every prime divisor $\ell$ of $\rho_i$ and $-a_i/b_i\notin -4 K(\underline x)^4$
if $4\hskip 2pt |\hskip 2pt \rho_i$. Then
\vskip 1,5mm

\noindent
(***) {\it 
there exist polynomials $M\in R[\underline x]$ with partial degrees any suitably large integers such that
$b_1 M^{\rho_1}+a_1, \ldots, b_s M^{\rho_s}+a_s$ 
are irreducible in $R[\underline x]$.}

\vskip 1,5mm

\noindent
This solves the polynomial analogs of all famous number-theoretic problems mentioned above
(twin prime, etc.), and proves the Dirichlet theorem as well.

On the other hand, Schinzel's hypothesis for $R[\underline x]$ obviously fails (hence Theorem \ref{thm:schinzel1} too)  for $n=1$ 
if $R=K$ is algebraically closed. It also fails for the finite field $R=\Ff_2$ and $\underline P=\{y^8+x^3\}$: from an example of Swan \cite[pp.1102-1103]{swan},
$M(x)^8 + x^3$ is reducible in $\Ff_2[x]$ for every $M\in \Ff_2[x]$.
Interestingly enough, results of Kornblum-Landau \cite{kornblum} show that it does hold for $\Ff_q[x]$ in the degree one case and
for one polynomial, i.e., in the situation of the Dirichlet theorem; see also \cite[Theorem 4.7]{rosen}. The situation that $R=K$ is a finite field, and the related one that $R=K$ is a PAC field\footnote{A field $K$ is PAC if every curve over 
$K$ has infinitely many $K$-rational points. The first examples of PAC fields were ultraproducts of finite fields.}, and $n=1$, have led to valuable variants; see \cite{Bary-Soroker_Dirichlet},
\cite{liorSchinzel},  \cite{bender-wittenberg}.

\subsection{Special rings} 
The special situation that $R=K$ is a field is easier, and is dealt with in \S \ref{sec:preliminary}. In the addendum to Theorem  \ref{thm:schinzel1} (in \S \ref{sec:preliminary}), $K$ is assumed to be a Hilbertian field, more exactly a {\it totally Hilbertian} field 
(definitions are in \S \ref{ssec:reminder}). This provides more fields than those in \S \ref{ssec:main-result}(c) for which Theorem \ref{thm:schinzel1} holds (with $R=K$): every abelian extension of $\Qq$, the field $k((u_1,\ldots,u_r))$ of formal power series over a field $k$ in at least two variables, etc. 
\vskip 1mm

For $R=k[u]$ with $k$ a field, we have this version of Theorem \ref{thm:schinzel1} in which the partial degrees of $M$ are prescribed,
including the degree in $u$.

\begin{theorem} \label{thm:schinzel2}
With ${\underline P}$ as above and $n\geq 1$, assume $R=k[u]$ with $k$ an ar\-bi\-tra\-ry field. 
For every $\underline d\in (\Nn^\ast)^n$ satisfying $\displaystyle d_1+\cdots+d_n \geq \max_{1\leq i\leq s} \deg_{\underline x}(P_i)+2$, there is an integer $d_0\geq 1$ such that for every integer $\delta\geq d_0$, there is a polynomial $M\in {\mathcal Irr}_{n}(R,\underline P)$
satisfying
 
 \vskip 1mm

\centerline{
$\left\{\begin{matrix}
\deg_{x_j}(M)= \hskip 2mm d_j \hskip 6mm j=1,\ldots, n \hfill \\
\deg_u(M) = \left\{\begin{matrix}
\delta \hskip 3mm  &\hbox{if ${\rm char}(k) = 0$} \hfill \\
p\delta \hskip 3mm  &\hbox{if ${\rm char}(k) = p>0$}. \hfill \\
\end{matrix}
\right.\\
\end{matrix}\right.$
}
 \end{theorem}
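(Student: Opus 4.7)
The plan is to deduce Theorem~\ref{thm:schinzel2} from Theorem~\ref{thm:schinzel1} applied to $R=k[u]$. The hypotheses hold: $k[u]$ is a UFD; its fraction field $K=k(u)$ carries the standard product formula from the discrete valuations attached to the closed points of $\Pp^1_k$; and $K$ is imperfect in characteristic $p>0$ since $u\notin k(u)^p$. Thus Theorem~\ref{thm:schinzel1} already gives that ${\mathcal Irr}_{n}(k[u],\underline P)$ is Zariski-dense in the affine $k[u]$-space ${\mathcal Pol}_{k[u],n,\underline d}$; and upgrading the bounds $\deg_{x_j}(M)\leq d_j$ to equalities is a non-vanishing, hence Zariski-open, condition on the coefficient of $x_j^{d_j}$ in $M$, so it will come for free once enough freedom has been secured.

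The nontrivial point is to prescribe the $u$-degree of $M$. The strategy I would follow is to reopen the proof of Theorem~\ref{thm:schinzel1} and observe that the ``bad'' $M$'s---those for which some $P_i(\underline x,M(\underline x))$ factors in $k[u,\underline x]$---lie in the $K$-rational points of a thin subset $T$ of the $K$-affine space ${\mathcal Pol}_{k[u],n,\underline d}\otimes_{k[u]} K$, in the sense of Hilbert irreducibility over $K=k(u)$. Fixing $\delta$ (respectively $p\delta$ in characteristic $p$), restrict attention to the finite-dimensional $k$-affine subspace $V_\delta\subset{\mathcal Pol}_{k[u],n,\underline d}$ of polynomials with $\deg_u\leq\delta$ (respectively $\leq p\delta$). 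Since $k(u)$ is Hilbertian for every base field $k$, and since for $\delta$ large enough $V_\delta\otimes_k K$ is not contained in $T$, one gets that $T\cap V_\delta$ is a proper Zariski-closed subset of $V_\delta$. Intersecting its complement with the Zariski-open conditions $\deg_u(M)=\delta$ (respectively $p\delta$) and $\deg_{x_j}(M)=d_j$ then yields the required polynomial $M$.

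The main obstacle lies in the characteristic $p$ case, and it is precisely what dictates the divisibility constraint $\deg_u(M)=p\delta$ in the statement. The imperfectness of $K$ used in Theorem~\ref{thm:schinzel1} typically enters through a $p$-power substitution argument; this forces the admissible $M$'s produced by the proof to be parametrised by families whose $u$-degrees are multiples of $p$. Working out this divisibility carefully---so as to realise \emph{every} sufficiently large $p\delta$ rather than just a sparse subsequence---is the delicate step, and it is also what forces the threshold $d_0$ to depend on $\underline P$ and $\underline d$.
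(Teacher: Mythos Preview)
Your proposal has a genuine gap at the step ``one gets that $T\cap V_\delta$ is a proper Zariski-closed subset of $V_\delta$.'' Thin sets in the sense of Hilbert irreducibility are \emph{not} Zariski-closed in general: the complement of a Hilbert subset of $K^N$ is typically a countable union of images of lower-dimensional varieties under finite morphisms, not a hypersurface. Hilbertianity of $K=k(u)$ tells you only that $T$ misses some $K$-points; it says nothing about the trace of $T$ on the $k$-linear slice $V_\delta$ being algebraic over $k$. So the leap from ``$V_\delta\otimes_k K\not\subset T$'' to ``$T\cap V_\delta$ is proper Zariski-closed in $V_\delta$'' is unjustified, and in fact this is precisely the hard content that the paper isolates as a separate theorem.

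Two further issues compound this. First, when $k$ is finite, even a genuine proper Zariski-closed subset of $V_\delta$ could cover all of $V_\delta(k)$, so Zariski-density arguments over $k$ would not suffice; the paper handles finite $k$ by an explicit Chinese Remainder construction. Second, you have not addressed irreducibility in $R[\underline x]$ as opposed to $K[\underline x]$: the primitivity of $P_i(\underline x,M(\underline x))$ with respect to $k[u]$ is not a Hilbert-type condition and is obtained in the paper by arranging that two specific coefficients $\lambda_1^\ast,\lambda_2^\ast$ of $M$ be relatively prime in $k[u]$, which again requires control beyond Zariski-density.

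The paper's route is to prove a dedicated Hilbertian-ring statement for $k[u]$ (Theorem~\ref{lemma:hilbert2}) guaranteeing that any Hilbert subset of $K^r$ contains tuples $\underline\lambda^\ast\in k[u]^r$ with $\lambda_1^\ast$ coprime to a prescribed product and with $\max_i\deg_u(\lambda_i^\ast)=\widetilde p\,d_1$ for all large $d_1$; the $\widetilde p=p$ in characteristic $p$ comes exactly from the $\tau\mapsto\tau^p+b$ substitution you anticipated. Theorem~\ref{thm:schinzel2} is then the special case of Theorem~\ref{thm:general}(b) obtained by taking the $Q_i$ to be all monomials in ${\mathcal Pol}_{k,n,\underline d}$, with the coprimality of $\lambda_1^\ast,\lambda_2^\ast$ feeding the primitivity argument.
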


Identifying $k[u][x_1,\ldots,x_n]$ with a polynomial ring in $n+1$ variables, it follows that Schinzel's hypothesis holds for polynomial rings in at least $2$ va\-ria\-bles over a field of characteristic $0$.
In characteristic $p>0$, a weak version holds where one degree is allowed to be any suitably large multiple of $p$. 
\vskip 1mm

In the degree one case of the Schinzel hypothesis, \hbox{i.e.} in the Dirichlet situation, one can get rid of this last restriction.

\begin{theorem} \label{thm:Main} Assume that $n\geq 2$ and $k$ is an arbitrary field.
Let $(A_1,B_1),$ $\ldots,$ $(A_s,B_s)$ be $s$ pairs of nonzero relatively prime polynomials in $k[\underline x]$. There is an integer $d_0\geq 1$ with this property: for all integers $d_1,\ldots,d_n$ larger than $d_0$, there exists an irreducible polynomial $M\in k[\xbar]$ such that $A_i+B_iM$ is irreducible in $k[\underline x]$, $i=1,\ldots,s$, 
and $\deg_{x_j}(M) = d_j$, 
$j=1,\ldots, n$.
\end{theorem}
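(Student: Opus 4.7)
My plan is to derive Theorem \ref{thm:Main} from Theorem \ref{thm:schinzel2}. Identify $k[\underline x]$ with $R[x_1,\ldots,x_{n-1}]$, where $R=k[u]$ and $u$ stands for $x_n$; this is legitimate because $n\ge 2$. Since $(A_i,B_i)$ is a coprime pair in $k[\underline x]=R[x_1,\ldots,x_{n-1}]$, the polynomial $A_i+B_iy$ is primitive in $R[x_1,\ldots,x_{n-1}][y]$ and, being of degree one in $y$, is irreducible. To force $M$ itself to be irreducible I append $y$ to the set $\{A_i+B_iy\}_i$, as in the footnote following~(**). The degree hypothesis of Theorem \ref{thm:schinzel2} then reads $d_1+\cdots+d_{n-1}\ge \max_i\max(\deg_{\underline x'}A_i,\deg_{\underline x'}B_i)+2$, where $\underline x'=(x_1,\ldots,x_{n-1})$; this is satisfied once $d_0$ is large.

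In characteristic zero, Theorem \ref{thm:schinzel2} directly produces an $M$ with $\deg_{x_j}(M)=d_j$ for $j=1,\ldots,n-1$ and $\deg_u(M)=\delta$ for every $\delta$ above some threshold $d_0^{\mathrm{Thm}}$; taking $\delta=d_n$ (which is allowed once $d_n>d_0^{\mathrm{Thm}}$) concludes the argument.

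The substance is in characteristic $p>0$, where Theorem \ref{thm:schinzel2} yields only $\deg_u(M)=p\delta$. The trick is a shift: setting $N=x_n^{d_n}$, apply Theorem \ref{thm:schinzel2} to
\[
\underline P^{\mathrm{mod}}=\{(A_i+B_iN)+B_iy:1\le i\le s\}\cup\{N+y\}
\]
instead of $\{A_i+B_iy\}_i\cup\{y\}$. Each element remains primitive in $y$ (since $\gcd(A_i+B_iN,B_i)=\gcd(A_i,B_i)=1$ and $\gcd(N,1)=1$), hence irreducible. Because $N$ involves only $u=x_n$, the $\underline x'$-degrees of the elements of $\underline P^{\mathrm{mod}}$ coincide with those of the unmodified set, so the degree condition in Theorem \ref{thm:schinzel2} is unaltered. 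The theorem yields $M_0$ with $\deg_{x_j}(M_0)=d_j$ for $j<n$ and $\deg_u(M_0)=p\delta$, such that $(A_i+B_iN)+B_iM_0$ and $N+M_0$ are irreducible. Setting $M=N+M_0$ gives $\deg_{x_j}(M)=d_j$ for $j<n$ (since $N$ does not involve $x_j$), $\deg_{x_n}(M)=\max(d_n,p\delta)=d_n$ provided $p\delta<d_n$, together with $A_i+B_iM=(A_i+B_iN)+B_iM_0$ irreducible and $M=N+M_0$ irreducible.

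The main obstacle is a uniformity point: one needs a single $\delta$ satisfying both $\delta\ge d_0^{\mathrm{Thm}}$ and $p\delta<d_n$, so the threshold $d_0^{\mathrm{Thm}}$ produced by Theorem \ref{thm:schinzel2} for $\underline P^{\mathrm{mod}}$ must be bounded independently of $d_n$. Since the modification only raises the $u$-degree, not the $\underline x'$-degree of the polynomials involved, this should follow from the inner workings of Theorem \ref{thm:schinzel2} (which controls partial $\underline x'$-degrees); granting it, one sets $d_0:=p\cdot d_0^{\mathrm{Thm}}+C$ for a constant $C$ absorbing the degree hypothesis, and the conclusion follows.
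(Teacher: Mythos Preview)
Your characteristic-zero reduction via Theorem~\ref{thm:schinzel2} is fine, but the characteristic-$p$ argument has a real gap, precisely where you flag it. The threshold $d_0^{\mathrm{Thm}}$ produced by Theorem~\ref{thm:schinzel2} depends on the input set $\underline P^{\mathrm{mod}}$, and your $\underline P^{\mathrm{mod}}$ depends on $d_n$ through $N=u^{d_n}$. You argue that the dependence should disappear because only the $u$-degree of the $P_i$ grows, not the $\underline x'$-degree; but Theorem~\ref{thm:schinzel2} is stated over $K=k(u)$, and its threshold comes (via Theorem~\ref{lemma:hilbert2}) from a chain of Hilbert-subset reductions whose outcome depends on the actual coefficients in $K$, not merely on their $\underline x'$-degrees. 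In the finite-field branch of \S\ref{ssec:proof1}, for instance, $d_0$ depends on the degrees of auxiliary primes $\mathfrak p_j$ of $k[u]$ produced by \cite[Lemma~13.3.4]{FrJa}, and there is no evident bound on these once the coefficients of $\underline P^{\mathrm{mod}}$ in $k[u]$ are allowed to have arbitrarily large degree. So ``granting it'' is granting something that does not follow from the statements you invoke and is not straightforward to extract from their proofs.

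The paper avoids this issue by a different mechanism. Rather than applying Theorem~\ref{thm:schinzel2} as a black box, it proves a structural lemma (Lemma~\ref{lemma:degree1_separable}) specific to the degree-one situation $\deg_y(P_i)=1$: the Hilbert subset $H_K(F_1,\ldots,F_s)$ already contains a \emph{separable} Hilbert subset, obtained via the Kronecker substitution. One then invokes the \emph{separable case} of Theorem~\ref{lemma:hilbert2}, where $\widetilde p=1$ regardless of the characteristic, so $\deg_u(M)=\delta$ with no factor of $p$. No shift trick and no uniformity hypothesis are needed. The moral is that the degree-one hypothesis of Theorem~\ref{thm:Main} is exactly what makes the relevant Hilbert set separable, and this is what eliminates the $p$-obstruction.
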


To our knowledge, this was unknown, even for $s=1$. When $k$ is infinite, we have a stronger version,
not covered either by Theorems \ref{thm:schinzel1} 
 and \ref{thm:schinzel2}. Let $\overline k$ denote an algebraic closure of $k$.

\begin{theorem} \label{thm:Main2} Assume $n\geq 2$ and $k$ is an infinite field. Let $A,B\in k[\underline x]$ be two nonzero relatively prime polynomials and
 ${\mathcal Irr}_{n}(k,A,B)$ the set of polynomials $M\in k[\xbar]$ such that $A+BM$ is irreducible in $k[\xbar]$. For every $\underline d\in (\Nn^\ast)^n$, ${\mathcal Irr}_{n}(\overline k,A,B)$ 
contains a nonempty Zariski open subset of ${\mathcal Pol}_{k,n,\underline d}(k)$. 
\end{theorem}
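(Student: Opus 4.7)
The plan is to argue directly, via a Bertini / Noether style specialization, that for $M$ ranging over ${\mathcal Pol}_{k,n,\underline d}$ the polynomial $A + B M$ is absolutely irreducible on a nonempty Zariski open subset. Write $M = \sum_{\underline i} \lambda_{\underline i} \underline x^{\underline i}$ with indeterminate coefficients $\underline \lambda = (\lambda_{\underline i})_{\underline 0 \le \underline i \le \underline d}$, set $N = \prod_{j=1}^n (d_j + 1)$ (so that ${\mathcal Pol}_{k,n,\underline d} \cong \Aa^N$), and form $F(\underline x, \underline \lambda) = A(\underline x) + B(\underline x)\, M(\underline x, \underline \lambda) \in k[\underline \lambda, \underline x]$.

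First I would establish that $F$ is absolutely irreducible in $\overline k[\underline \lambda, \underline x]$. Since $F$ has total degree $1$ in $\underline \lambda$, any factorization $F = GH$ in $\overline k[\underline \lambda, \underline x]$ forces one factor, say $G$, to lie in $\overline k[\underline x]$, while $H$ is affine-linear in $\underline \lambda$. Matching the constant coefficient in $\underline \lambda$ gives $G \mid A$, and matching the coefficient of each $\lambda_{\underline i}$ gives $G \mid B\,\underline x^{\underline i}$, in particular $G \mid B$. As $\gcd_{k[\underline x]}(A, B) = 1$ is preserved under field extension (via subresultants), $\gcd_{\overline k[\underline x]}(A, B) = 1$, so $G$ is a unit. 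By Gauss's lemma over the UFD $\overline k[\underline \lambda]$, irreducibility then propagates to $\overline k(\underline \lambda)[\underline x]$, i.e.\ $F$ is absolutely irreducible as a polynomial in $\underline x$ over $\overline k(\underline \lambda)$.

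Next I would invoke the classical openness of absolute irreducibility in a family of hypersurfaces in $\Aa^n_{\overline k}$: there exists a nonempty Zariski open $U \subset \Aa^N_{\overline k}$ such that $F(\underline x, \underline \lambda_0) \in \overline k[\underline x]$ is absolutely irreducible for every $\underline \lambda_0 \in U(\overline k)$. This follows from Noether's theorem exhibiting universal forms in the coefficients of a polynomial whose simultaneous vanishing characterizes reducibility; as the coefficients of $F$ lie in $k[\underline \lambda]$, the locus $U$ is automatically defined over $k$. The hypothesis $n \geq 2$ is indispensable here, since it makes the generic fiber of the projection $\{F = 0\} \subset \Aa^{N+n} \to \Aa^N$ a positive-dimensional absolutely irreducible hypersurface; for $n = 1$ generic specialization would split the polynomial into linear factors over $\overline k$ and the openness statement would fail.

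To conclude, since $k$ is infinite and $U$ is defined over $k$, $U(k)$ is a nonempty Zariski open subset of ${\mathcal Pol}_{k,n,\underline d}(k)$, and by construction $U(k) \subset {\mathcal Irr}_n(\overline k, A, B)$, yielding the theorem. The main obstacle is the openness of absolute irreducibility invoked in the third paragraph: it is classical (Ostrowski, Noether) but nontrivial, and it is precisely the point at which the hypothesis $n \geq 2$ plays its essential role.
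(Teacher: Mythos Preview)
Your overall strategy matches the paper's: prove that $F = A + BM$ is geometrically irreducible over the parameter field, then apply the Bertini--Noether theorem to obtain a nonempty Zariski open set of good specializations. The gap is in the passage from ``irreducible in $\overline k(\underline\lambda)[\underline x]$'' to ``absolutely irreducible''. These are not the same: $\overline k(\underline\lambda)$ is not algebraically closed, and absolute irreducibility means irreducibility over its algebraic closure $\overline{k(\underline\lambda)}$. Your degree-one-in-$\underline\lambda$ argument (which is essentially the paper's Lemma~\ref{lemma:first}(a)) only yields the former. To see that the distinction matters, note that your argument works verbatim with $M$ replaced by any $\sum_j \lambda_j Q_j$; yet for $A = x_1^2$, $B = -x_2^2$, $Q_0 = Q_1 = 1$ one gets $F = x_1^2 - (\lambda_0+\lambda_1)x_2^2$, irreducible in $\overline k(\lambda_0,\lambda_1)[x_1,x_2]$ but splitting over $\overline{k(\lambda_0,\lambda_1)}$. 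Since the Noether forms detect reducibility over the algebraic closure, the open locus $U$ is nonempty only if $F$ is irreducible in $\overline{k(\underline\lambda)}[\underline x]$, which you have not established.

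The paper closes this gap via the Bertini--Krull theorem (its Lemma~\ref{lemma:general2}): if $F$ were reducible over $\overline{K(\underline\lambda)}$, then either ${\rm char}(K)=p>0$ and $F \in K[\underline\lambda,\underline x^p]$, or $A, B, Q_1, \ldots, Q_\ell$ all lie in $K[\chi]$ for a single polynomial $\chi \in K[\underline x]$. Both alternatives are ruled out precisely because the generic $M$ contains the monomials $x_1$ and $x_2$; this is where the hypotheses $n\ge 2$ and $\underline d \in (\Nn^\ast)^n$ actually enter. Your observation that $n\ge 2$ makes the generic fibre positive-dimensional explains why the hypothesis is necessary, but it is the Bertini--Krull step that shows it is sufficient.
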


\subsection{The Goldbach problem} \label{ssec:goldbach} The analog of the Goldbach conjecture for a polynomial ring $R[\underline x]$ is that every nonconstant polynomial ${\mathcal Q}\in R[\underline x]$ is the sum of two irreducible polynomials $F,G\in R[\underline x]$ with $\deg(F) \leq \deg({\mathcal Q})$ {\rm (}and so $\deg(G) \leq \deg({\mathcal Q})$ too{\rm )}. 
Pollack  \cite{pollack} showed it in the $1$-variable case when $R$ is a Noetherian integral domain with infinitely many maximal ideals, or, if $R=S[u]$ with $S$ an integral domain. His method relies on a clever use of the Eisenstein criterion.

Finding Goldbach decompositions for ${\mathcal Q}\in R[\underline x]$ ($n\geq 1$) corresponds to the special situation of the degree $1$ case of the Schinzel hypothesis for which $\underline P=\{P_1,P_2\}$ with $P_1=-y$ and $P_2=y+{\mathcal Q}$. We obtain this result.

\begin{corollary} \label{cor:goldbach}
Let $R$ be a ring as in Theorem \ref{thm:schinzel1}. Every nonconstant poly\-nomial ${\mathcal Q}\in R[\underline x]$ is the sum of two irreducible polynomials $F,G\in R[\underline x]$ with $F=a + b \hskip 1pt x_1^{d_1} \cdots x_n^{d_n}$ 
{\rm (}$a,b\in R${\rm )} a binomial of degree $d_1+\cdots+d_n \leq \deg({\mathcal Q})$.
\end{corollary}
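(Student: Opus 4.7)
The plan is to obtain the Goldbach decomposition as the degree-one case of Theorem~\ref{thm:schinzel1}, applied to the pair $\underline P=\{P_1,P_2\}$ with $P_1(\underline x,y)=-y$ and $P_2(\underline x,y)=y+\mathcal{Q}$. Both polynomials lie in $R[\underline x,y]$, are linear in $y$ with unit leading coefficient (hence irreducible there), and satisfy $\max_i\deg_{\underline x}(P_i)=\deg_{\underline x}(\mathcal{Q})$. Invoking Theorem~\ref{thm:schinzel1} produces a polynomial $M\in R[\underline x]$ with $-M$ and $M+\mathcal{Q}$ simultaneously irreducible in $R[\underline x]$, so that $\mathcal{Q}=(M+\mathcal{Q})+(-M)$ is already a decomposition of $\mathcal{Q}$ into two irreducible summands.

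To refine this into the binomial shape claimed by the corollary, fix exponents $d_1,\ldots,d_n$ with $d_1+\cdots+d_n\leq\deg\mathcal{Q}$ and write $\xi=x_1^{d_1}\cdots x_n^{d_n}$. The goal is to select $M$ inside the constrained two-parameter family $M=-a-b\,\xi$ with $a,b\in R$, so that $F=-M=a+b\,\xi$ has the desired binomial form. Treating $a,b$ as indeterminates over $K$, one verifies generic irreducibility in $K(a,b)[\underline x]$ of both $a+b\,\xi$ (by the Capelli criterion, since $-a/b$ is transcendental over $K$ and hence not a nontrivial power) and $\mathcal{Q}(\underline x)-a-b\,\xi$ (its degrees in $a$ and $b$ are both $1$ and the coefficient of $a$ is the unit $-1$, so any factorization must involve a constant factor).

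The main obstacle is to descend this generic irreducibility from $K(a,b)$ to a specialization $(a,b)\mapsto(a_0,b_0)\in R^2$ that preserves irreducibility of both polynomials in $R[\underline x]$ — not merely in $K[\underline x]$. This is precisely what the Hilbert-property-over-rings underlying Theorem~\ref{thm:schinzel1} is engineered to do. Applied in the two-parameter family indexed by $(a,b)$ (either through a multivariable form of that property, or by iterating Theorem~\ref{thm:schinzel1} one parameter at a time), one extracts $(a_0,b_0)\in R^2$ such that $F=a_0+b_0\,\xi$ and $G=\mathcal{Q}-F$ are both irreducible in $R[\underline x]$; by construction $F$ is a binomial of total degree $d_1+\cdots+d_n\leq\deg\mathcal{Q}$, and $G$ is its irreducible complement, yielding the claimed decomposition.
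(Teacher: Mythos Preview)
Your setup with $\underline P=\{-y,\,y+\mathcal Q\}$ and the two-parameter family $M=-a-b\,\xi$ is exactly right, and your check of generic irreducibility over $K(a,b)$ is fine. The gap is in the last paragraph, where you invoke ``the Hilbert-property-over-rings underlying Theorem~\ref{thm:schinzel1}'' as a black box to land $(a_0,b_0)\in R^2$ with both specializations irreducible \emph{in $R[\underline x]$}. That property, as stated and proved in the paper (Proposition~\ref{prop:I-HilbertUFD}, Theorem~\ref{lemma:hilbert1}), only yields irreducibility in $K[\underline x]$; irreducibility in $R[\underline x]$ requires in addition that the specialized polynomials be primitive with respect to $R$, and the paper is explicit that this fails for general polynomials (see the example $F=(\lambda^2-\lambda)x+(\lambda^2-\lambda+2)$ in \S\ref{ssec:hilbertian_rings}). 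The primitivity argument in Theorem~\ref{thm:general} is what produces the $+2$ in the degree bound of Theorem~\ref{thm:schinzel1}: it needs $\min(\deg Q_1,\deg Q_2)>\max_i\deg_{\underline x}(P_i)=\deg\mathcal Q$, which is precisely the opposite of the constraint $\deg(\xi)\leq\deg\mathcal Q$ you are working under. So you cannot simply quote that machinery here.

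What the paper does instead is an ad hoc primitivity argument that replaces the degree hypothesis by a congruence choice. One picks a nonconstant monomial $Q_\infty$ actually occurring in $\mathcal Q$ with coefficient $q_\infty\in R$, takes $Q_1$ to be a nonconstant monomial of degree $\leq\deg\mathcal Q$ \emph{different from} $Q_\infty$, and uses the congruence clause of Proposition~\ref{prop:I-HilbertUFD} to force $\lambda_0^\ast\equiv 1-q_0\pmod{q_\infty}$ and $\lambda_1^\ast\equiv 1\pmod{\lambda_0^\ast}$, where $q_0$ is the constant term of $\mathcal Q$. Then $M=\lambda_0^\ast+\lambda_1^\ast Q_1$ is primitive because $\gcd(\lambda_0^\ast,\lambda_1^\ast)=1$, and $M+\mathcal Q$ is primitive because its $Q_\infty$-coefficient is $q_\infty$ while its constant term is $\lambda_0^\ast+q_0\equiv 1\pmod{q_\infty}$. (The corner case $n=1$, $\deg\mathcal Q=1$, where no such $Q_1$ exists, is handled by an explicit decomposition.) This congruence trick is the missing idea in your proposal; without it, nothing prevents a prime of $R$ from dividing every coefficient of $\mathcal Q-a_0-b_0\xi$.
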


One can even take $d_1+\cdots+d_n=1$ when $R=K$ is a Hilbertian field, or when $n\geq 2$ and $R=K$ is an infinite field (the latter was already known from \cite[Corollary 4.3(2)]{BDN1}). On the other hand, the Goldbach conjecture fails for $\Ff_2[x]$ and ${\mathcal Q}(x)=x^2+x$ (note that $x^2+x+1$ is the only irreducible polynomial in $\Ff_2[x]$ of degree $2$). From Corollary \ref{cor:goldbach} however, it holds true for $\Ff_q[x,y]$ if condition $\deg(F)\leq \deg({\mathcal Q})$ is replaced by $\deg_{x}(F)\leq \deg_{x}({\mathcal Q})$.

\subsection{Spectra} The following result uses Theorem \ref{thm:Main} as a main ingredient.
\begin{corollary} \label{cor:spectre}
Assume that $n\geq 2$ and $k$ is an arbitrary field. 
Let ${\mathcal S}\subset k$ be a finite subset, $a_0\in \overline k\setminus {\mathcal S}$, separable over $k$ 
and $V\in k[\underline x]$ a nonzero polynomial. Then, for all suitably large integers $d_1,\ldots,d_n$ {\rm (}larger than some $d_0$ depending on ${\mathcal S}$, $a_0$, $V${\rm )}, there is a polynomial $U\in k[\underline x]$
such that:
\vskip 0,5mm

\noindent
\hskip -3mm
$\begin{matrix}
& \hbox{{\rm (a)} $U(\underline x) - a V(\underline x)$ is reducible in $k[\underline x]$ for every $a \in {\mathcal S}$,} \hfill\\
& \hbox{{\rm (b)} $U(\underline x) - a_0 V(\underline x)$ is irreducible in $k(a_0)[\underline x]$ of degree $\max(\deg(U),\deg(V))$,}  
\hfill\\
& \hbox{{\rm (c)} $\deg_{x_i}(U)=d_i$, $i=1,\ldots,n$.} \hfill \\
\end{matrix}$
\end{corollary}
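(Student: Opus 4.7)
The plan is to force (a) by a Chinese Remainder construction and then to secure (b) via Theorem~\ref{thm:Main} applied to a Galois-equivariant family of linear pairs. Write $\mathcal{S}=\{a_1,\dots,a_N\}$, and let $\mu(T)\in k[T]$ be the minimal polynomial of $a_0$, of degree $r$; by the separability hypothesis its roots $\alpha_1=a_0,\dots,\alpha_r\in\overline k$ are distinct. Choose $N$ pairwise coprime irreducibles $H_1,\dots,H_N\in k[\underline x]$, each coprime to $V$ and of small partial degrees. By CRT in the UFD $k[\underline x]$, find $U_0\in k[\underline x]$ with $U_0\equiv a_iV\pmod{H_i}$ for $i=1,\dots,N$; set $H=\prod_iH_i$ (taken as $1$ if $N=0$), and look for $U$ of the form $U_0+HM$ with $M\in k[\underline x]$ to be determined. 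Then (a) is automatic: $H_i$ divides both $U_0-a_iV$ and $H$, so $H_i\mid U-a_iV$ with a nonunit cofactor once $\deg_{x_j}(M)$ is large.

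For (b), consider the $r$ pairs $(A_j,B_j):=(U_0-\alpha_jV,\, H)\in\overline k[\underline x]^2$. They are pairwise coprime: any irreducible factor of $H_i$ in $\overline k[\underline x]$ dividing $A_j$ would, together with $U_0\equiv a_iV\pmod{H_i}$, give $(a_i-\alpha_j)V\equiv 0$ modulo that factor, contradicting $\alpha_j\neq a_i$ (since $a_0\notin\mathcal{S}$) and $\gcd(H_i,V)=1$ in $k[\underline x]$, a relation preserved under base change to $\overline k[\underline x]$ by ideal-height considerations. The $r$ pairs form a single $\mathrm{Gal}(\overline k/k)$-orbit since $H\in k[\underline x]$ and the $A_j$ are Galois-conjugate. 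Apply Theorem~\ref{thm:Main} (multi-pair version) over the infinite field $\overline k$ to this orbit: for sufficiently large prescribed partial degrees it produces $M\in\overline k[\underline x]$ making every $A_j+B_jM$ irreducible in $\overline k[\underline x]$. The set of such good $M$'s is $\mathrm{Gal}(\overline k/k)$-stable and hence defined over $k$; for $k$ infinite it contains a $k$-point by density of $k$-rational points in nonempty $k$-Zariski opens of ${\mathcal Pol}_{k,n,\underline d}$, and for $k$ finite it does so once the $d_j$ are large enough, via a Lang--Weil count in the high-dimensional ambient space. The resulting $M\in k[\underline x]$ renders $U-a_0V=A_1+B_1M$ irreducible in $\overline k[\underline x]$, hence in $k(a_0)[\underline x]$; the degree equality in (b) follows by taking $d_j>\deg_{x_j}(V)$ to preclude top-degree cancellation, and (c) by choosing $\deg_{x_j}(M)=d_j-\deg_{x_j}(H)$.

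The main obstacle is this Galois descent from $\overline k[\underline x]$ to $k[\underline x]$ in the finite-field case, where a Galois-invariant nonempty Zariski-open subset may a priori miss all $k$-points; the hypothesis that the $d_j$ be suitably large is what ultimately provides enough $k$-rational points in ${\mathcal Pol}_{k,n,\underline d}$. An alternative bypass would reformulate the irreducibility of $U-a_0V$ in $k(a_0)[\underline x]$ as the irreducibility in $k[\underline x]$ of the $k$-rational norm $\prod_j(U_0+HM-\alpha_jV)$ and invoke a reinforcement of Theorem~\ref{thm:Main} over $k$ to pairs extracted from its expansion.
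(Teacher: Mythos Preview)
Your CRT setup (forcing $H_i\mid U-a_iV$) matches the paper's ``explicit form'' of Corollary~\ref{cor:spectre}. The divergence is in the descent step, and there the finite-field case has a real gap.

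You invoke Theorem~\ref{thm:Main} over $\overline k$ and then assert that the set of good $M$'s contains a Galois-stable nonempty Zariski open. But Theorem~\ref{thm:Main} is an existence statement; the open-set conclusion you need is Theorem~\ref{thm:Main2} (or Lemma~\ref{lemma:general2}), applied once to each conjugate pair $(A_j,B_j)$. Granting that substitution, your argument is fine when $k$ is infinite: the intersection of the $r$ opens is a nonempty $\overline k$-open, Galois-stable, hence $k$-defined, and $k$-points are dense. For $k$ finite, however, the appeal to Lang--Weil is exactly what the paper flags as unjustified in Remark~3.3(a): the bad locus from Bertini--Noether is a hypersurface whose degree grows with $\underline d$, and nothing in the paper bounds it below the number of $\Ff_q$-points of ${\mathcal Pol}_{k,n,\underline d}$. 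Your ``alternative bypass'' via the norm $\prod_j(U_0+HM-\alpha_jV)$ does not help either: irreducibility of that product in $k[\underline x]$ is neither implied by nor equivalent to irreducibility of $U-a_0V$ in $k(a_0)[\underline x]$, and in any case Theorem~\ref{thm:Main} handles only degree-$1$ polynomials in $y$, not the degree-$r$ expression you would obtain.

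The paper's route avoids all of this by working one variable at a time: set $R=k(a_0)[x_n]$, $K=k(a_0)(x_n)$, $K_0=k(x_n)$, and use Lemma~\ref{lemma:degree1_separable} to place $H_K(F_0,\dots,F_t)$ above a \emph{separable} Hilbert subset. Then \cite[Corollary~12.2.3]{FrJa} (Hilbert subsets descend along the finite separable extension $K/K_0$) yields a separable Hilbert subset of $K_0^{N_{\underline d}+1}$, and the separable case of Theorem~\ref{lemma:hilbert2} over $R_0=k[x_n]$ produces the required $\underline\lambda^\ast$ with coordinates in $k[x_n]$, hence $M\in k[\underline x]$, uniformly in whether $k$ is finite or infinite. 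The point is that the descent happens at the level of Hilbert subsets, where finite separable base change is a theorem, rather than at the level of Zariski opens, where finite fields are genuinely problematic.
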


If ${\mathcal S}\not=k$, e.g. if $k$ is infinite, $a_0$ can be chosen in $k$ itself.

A more precise version of Corollary \ref{cor:spectre}, given in \S \ref{sec:spectra}, shows that one can even prescribe all irreducible factors  but one of each polynomial $U(\underline x)-a V(\underline x)$, $a\in {\mathcal S}$, provided that these factors satisfy some standard condition.

If $k$ is algebraically closed, the irreducibility condition (b) implies that the rational function $U/V$ is {\it indecomposable} \cite[Theorem 2.2]{bodin_isrJ}; ``indecompos\-able'' means that $U/V$ cannot be written $h\circ H$ with $h\in k(u)$ and $H\in k(\underline x)$ with $\deg(h)\geq 2$. The set of all $a\in k$ such that $U(\underline x) - a V(\underline x)$ is reducible in $k[\underline x]$ is called the {\it spectrum} of $U/V$ and the indecomposability condition equivalent to the {spectrum} being finite.
Corollary \ref{cor:spectre} rephrases to conclude that 
given ${\mathcal S}$ and $V$ as above, indecomposable rational functions $U/V\in k(\underline x)$ 
exist with a spectrum containing ${\mathcal S}$ and satisfying (c). See 
\cite{salah1} \cite{salah2} for the special case $V=1$
and \cite[\S 3.1.1]{BDN3} for further results.

\subsection{Hilbertian rings} \label{ssec:hilbertian_rings}
Except for Theorem \ref{thm:Main2} for which we use geometrical tools (\S \ref{sec:ngeq2}),
we follow a Hilbert like specialization approach.

Given an irreducible polynomial $F(\underline \lambda, \underline x) \in R[\underline \lambda, \underline x]$ with $\deg_{\underline x}(F)\geq 1$, the 
Hilbert 
property provides specializations $\lambda_1^\ast,\ldots, \lambda_r^\ast\in K$ of the indeterminates from $\underline \lambda$ such that 
$F(\lambda_1^\ast,\ldots, \lambda_r^\ast,\underline x)$ is irreducible in $K[\underline x]$ (\S \ref{ssec:reminder}). 

As suggested above and detailed in \S \ref{sec:preliminary}, the challenge for our purpose is to make it work {\it over the ring $R$}, i.e., to be able to find $\lambda_1^\ast,\ldots, \lambda_r^\ast$ in $R$ such that $F(\lambda_1^\ast,\ldots, \lambda_r^\ast,\underline x)$ is irreducible in $R[\underline x]$. A problem however is that this is false in general, even with $R=\Zz$. Take $F=(\lambda^2-\lambda) \hskip 2pt x + (\lambda^2-\lambda+2)$ in $\Zz[\lambda,x]$; for every $\lambda^\ast\in \Zz$, $F(\lambda^\ast, \underline x)$ is divisible by $2$, hence reducible in $\Zz[x]$. 

To remedy this problem, we develop the notion of {\it Hilbertian ring} introduced in \cite[\S 13.4]{FrJa}. 
The defining property is that, for separable polynomials $F(\underline \lambda,x)$ in the one variable $x$, tuples $(\lambda_1^\ast,\ldots, \lambda_r^\ast)$ can be found with coordinates in the ring $R$ and satisfying the specialization property over $K$.  

Our approach 
to reach irreducibility over $R$
can be summarized as follows.
It may be of interest for the sole sake of the Hilbertian field theory.
\vskip 1,5mm

\noindent ({\it Hilbert sections \ref{sec:Hilbert} and \ref{sec:Schinzel}})
Assume that $K$ is of characteristic $0$, 
or $K$ is of characteristic $p>0$ and imperfect (the {\it imperfectness assumption}).

\noindent
\hskip 2mm {\rm (a)} 
We extend the property of Hilbertian rings to all irreducible polynomials $F(\underline \lambda, \underline x)$ (not just the separable ones $F(\underline \lambda, x)$), and show in fact a stronger version: $\lambda_1^\ast,\ldots, \lambda_r^\ast$ can be chosen pairwise relatively prime (Prop.\ref{prop:I-HilbertUFD}); and for $R=k[u]$,
their degrees in $u$ can be prescribed \hbox{off a finite range \hskip -2pt (Theorem \ref{lemma:hilbert2}).}

\noindent
\hskip 2mm {\rm (b)} We show that if $K$ is a field with the product formula, then $R$ is a Hil\-ber\-tian ring (Theorem  \ref{lemma:hilbert1}); this improves on \cite[Prop.13.4.1]{FrJa} where the as\-sumption is that $R$ is finitely generated over $\Zz$, or over $k[u]$ for some\hbox{ field $k$.}

\noindent
\hskip 2mm {\rm (c)} For $R$ both a UFD and a Hilbertian ring, we show that our polynomials $F(\underline \lambda, \underline x)$, due to their structure, satisfy the specialization property {\it over the ring $R$},
and we prove Theorem \ref{thm:schinzel1} in this situation (\S  \ref{sec:Schinzel}). 
\vskip 1mm

The imperfectness assumption relates to a classical subtlety in positive cha\-rac\-te\-ris\-tic. There are two notions of {\it Hilbertian fields}, depending on whether the specialization property is requested for all irreducible polynomials or only for the separable ones.
We follow \cite{FrJa} and use the name {\it Hilbertian} for the weaker (the latter), and we say {\it totally Hilbertian} for the stronger (precise definitions are in \S \ref{ssec:reminder}). They are equivalent under the {imperfectness assumption} \cite{uchida} \cite[Proposition 12.4.3]{FrJa}.

\vskip 2mm

\noindent
{\bf Final note.} 
The original Schinzel hypothesis has also appeared in \hbox{Arithmetic} Geometry, notably around the question of whether, for appropriate varieties over a number field $k$, the Brauer-Manin 
obstruction is the only obstruction to the Hasse principle: if rational points exist locally (over all completions of $k$), they should exist globally (over $k$). In 1979, Colliot-Th\'el\`ene and Sansuc \cite{colliot-sansuc} noticed that this is true for a large family of conic bundle surfaces over $\Pp^1_\Qq$ if one assumes Schinzel's hypothesis. This conjectural statement has become since a working hypothesis of the area. See for example \cite{harpaz-wittenberg2016} for some last developments. Although the number field environment seems closely tied to the question, it could be interesting to investigate the potential use of our polynomial version of the Schinzel hypothesis to some similar questions over appropriate fields like rational function fields.

\vskip 1,5mm
The paper is organized as follows. The strategy is detailed in \S \ref{sec:preliminary}. \S \ref{sec:ngeq2} is devoted to the geometric case that $R=k[\underline x]$ with $n\geq 2$ and $k$ is an infinite field; Theorem \ref{thm:Main2} is proved. \S \ref{sec:Hilbert} is the Hilbert part. The main results from \S \ref{sec:intro} (other than Theorem \ref{thm:Main2}) are finally proved in \S \ref{sec:Schinzel}.

\section{General strategy} \label{sec:preliminary}

Throughout the paper, $R$ is a UFD with fraction field $K$. Recall that a polynomial with coefficients 
in $R$ is said to be {\it primitive \hbox{w.r.t.} $R$} if its coefficients are relatively prime  in $R$. 

All indeterminates are algebraically 
independent over $\overline K$.

Let $\underline x = (x_1,\ldots,x_n)$ ($n\geq 1$) and $\lambdabar=(\lambda_0, \lambda_1,\ldots,\lambda_\ell)$ 
($\ell \geq 1$) be two tuples of indeterminates and let ${\underline Q}= (Q_0, Q_1,\ldots,Q_\ell$) with $Q_0=1$
be a $(\ell+1)$-tuple of nonzero polynomials in $R[\underline x]$, distinct up to multiplicative constants 
in $K^\times$. Set
\vskip 1mm

\centerline{$M(\lambdabar,\xbar) = \sum_{i=0}^{\ell} \lambda_i \hskip 1pt Q_i(\underline x)$.} 

\vskip 1mm

Consider a set $\underline{P}=\{P_1,\ldots, P_s\}$ of $s$ polynomials 
\vskip 1mm

\centerline{$P_i(\underline x,y)= P_{i\degP_i}(\underline x) \hskip 2pt y^{\degP_i} +\cdots+ P_{i1}(\underline x) \hskip 1pt y + P_{i0}(\underline x)$,}
\vskip 1mm

\noindent
irreducible in $R[\underline x,y]$ and of degree $\degP_i \geq 1$ in $y$, $i=1,\ldots,s$.
Each polynomial $P_i(\underline x,y)$ is irreducible in $K(\underline x)[y]$ and is
primitive \hbox{w.r.t.} $R[\underline x]$. 

Finally set, for $i=1,\ldots,s$,
 
\vskip 1,5mm 
\centerline{$F_i(\lambdabar,\xbar) 
= P_i(\underline x, M(\lambdabar,\xbar))= P_i(\underline x, \sum_{j=0}^\ell \lambda_j Q_j(\underline x))$.}
\vskip 1,5mm

\noindent
In the case $\rho_i = 1$, i.e., $P_i = A_i(\underline x) + B_i(\underline x)\hskip 1pt  y$, the polynomial $F_i$ rewrites

\vskip 1mm
\centerline{$ F_i(\lambdabar,\xbar)= A_i(\xbar) + B_i(\xbar) \left(\sum_{j=0}^\ell \lambda_j Q_j(\underline x))\right)$.}
\vskip 1mm

\begin{lemma} \label{lemma:first}
{\rm (a)} Each polynomial $F_i(\lambdabar,\xbar)$ is irreducible in $R[\underline \lambda,\xbar]$ and of de\-gree $\geq 1$ in $\underline x$. Furthermore, 
if $\deg_y(P_i)=1$, $F_i(\lambdabar,\xbar)$ \hbox{is irreducible
in $\overline K[\lambdabar,\xbar]$.}
\vskip 1,2mm

\noindent
{\rm (b)} If $R$ is infinite and $\Pi= \prod_{i=1}^s P_i$, there is no irreducible polynomial $p \in R[\underline x]$ dividing all polynomials $\Pi(\underline x, M(\underline x))$ with $M\in R[\underline x]$.
\end{lemma}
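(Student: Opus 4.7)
My plan for (a) is to exploit the linear-in-$\lambda_0$ structure of $M(\lambdabar,\xbar)$: the first step is to introduce the $R[\lambda_1,\ldots,\lambda_\ell,\xbar]$-algebra automorphism $\phi$ of $R[\lambdabar,\xbar]$ defined by $\lambda_0\mapsto \lambda_0-\sum_{j\geq 1}\lambda_j Q_j(\xbar)$ (its inverse adds back the same expression), and to observe that $\phi$ carries $F_i = P_i(\xbar,\lambda_0+\sum_{j\geq 1}\lambda_j Q_j)$ to $P_i(\xbar,\lambda_0)$. This reduces irreducibility of $F_i$ in $R[\lambdabar,\xbar]$ to irreducibility of $P_i(\xbar,\lambda_0)$ in $R[\xbar,\lambdabar]$, which follows from $P_i$ being irreducible in $R[\xbar,y]$ because adjoining the indeterminates $\lambda_1,\ldots,\lambda_\ell$ to a UFD preserves irreducibility. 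For the degree claim $\deg_{\xbar}(F_i)\geq 1$ I would split cases: if $P_i$ already involves $\xbar$, specialising $\lambda_1=\cdots=\lambda_\ell=0$ in $F_i$ recovers $P_i(\xbar,\lambda_0)$ and exhibits positive $\xbar$-degree; otherwise $P_i\in R[y]$ and $F_i=P_i(M)$ has $\xbar$-degree $\rho_i\cdot\deg_{\xbar}(M)\geq 1$, using that the $Q_j$ are pairwise non-proportional in $K^\times$, so none of $Q_1,\ldots,Q_\ell$ can be a scalar multiple of $Q_0=1$.

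For the second assertion in (a), when $P_i = A_i(\xbar)+B_i(\xbar)y$, the plan is to view $F_i = A_i + B_i\lambda_0 + \sum_{j\geq 1} B_i Q_j\lambda_j$ as a polynomial of total degree $1$ in $\lambdabar$ with coefficients in $\overline K[\xbar]$. Any factorization in $\overline K[\lambdabar,\xbar]$ must split off a factor lying in $\overline K[\xbar]$ (of $\lambdabar$-degree $0$), which then divides the gcd of the $\lambdabar$-coefficients, namely $\gcd_{\overline K[\xbar]}(A_i,B_i)$. The plan finishes by invoking two standard facts: irreducibility of $P_i$ in $K(\xbar)[y]$ forces $A_i,B_i$ to be coprime in $K[\xbar]$, and coprimality in $K[\xbar]$ is preserved when extending scalars to $\overline K$; hence that factor is a unit.

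For (b) my strategy is a cardinality count modulo $p$. Suppose, for contradiction, that an irreducible $p\in R[\xbar]$ divides $\Pi(\xbar,M(\xbar))$ for every $M\in R[\xbar]$, and set $D = R[\xbar]/(p)$ with fraction field $L$. Reducing mod $p$, every class $\overline M \in D$ is a root in $L$ of the reduction $\overline{\Pi}(\xbar,y)\in D[y]$ of $\Pi(\xbar,y)$; and the primitivity of each $P_i$ with respect to $R[\xbar]$ ensures that $p$ does not divide all $y$-coefficients of $P_i$, so each $\overline{P_i}$ is nonzero in $L[y]$ and hence so is $\overline{\Pi}$. As $\overline{\Pi}$ has degree $\rho = \sum_i \rho_i$ in $y$, it has at most $\rho$ roots in $L$, whence $|D|\leq\rho$. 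The main obstacle—where the hypothesis that $R$ is infinite enters—is to conclude that $D$ is itself infinite. I would argue by cases on whether $p\in R$ or not: if $p\in R$ then $p$ is irreducible in $R$ and $D\cong(R/(p))[\xbar]$ is infinite because $n\geq 1$; if $p\notin R$ then $p$ has positive $\xbar$-degree, a degree comparison in $\xbar$ shows $R\cap(p)=0$, so $R$ embeds in $D$, which is then infinite because $R$ is.
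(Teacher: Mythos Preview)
Your proof is correct and follows essentially the same route as the paper: the $R[\lambda_1,\ldots,\lambda_\ell,\xbar]$-automorphism shifting $\lambda_0$ for irreducibility in (a), the reduction of the $\overline K$-irreducibility claim to $\gcd_{\overline K[\xbar]}(A_i,B_i)=1$, and for (b) the observation that every element of the infinite integral domain $R[\xbar]/(p)$ would be a root of the nonzero reduction $\overline\Pi$. One small slip to fix: you justify coprimality of $A_i,B_i$ in $K[\xbar]$ by ``irreducibility of $P_i$ in $K(\xbar)[y]$'', but a degree-$1$ polynomial is \emph{always} irreducible in $K(\xbar)[y]$, so that premise forces nothing; the correct reason is primitivity of $P_i$ with respect to $R[\xbar]$ (equivalently, irreducibility of $A_i+B_iy$ in the UFD $R[\xbar,y]$), which yields $\gcd(A_i,B_i)\in R^\times$ and hence coprimality in $K[\xbar]$.
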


Note that (b) fails if $R$ is finite: with $R=\Ff_2$ and $\underline P=\{x,x+1\}$, the polynomial $x$ divides all polynomials $M(x) (M(x)+1)$ ($M\in \Ff_2[x]$).

\begin{proof}
(a) Fix an integer $i\in \{1,\ldots,s\}$. By assumption, the polynomial $P_i(\underline x, \lambda_0)$ is irreducible in $R[\underline x,\lambda_0]$. It is also irreducible in the bigger ring $R[\underline x,\underline \lambda]$.  Consider the ring automorphism $R[\underline x,\underline \lambda] \rightarrow R[\underline x,\underline \lambda]$ that is the identity on $R[\underline x, \lambda_1,\ldots,\lambda_{\ell}]$ and maps $\lambda_0$ to 
the polynomial $\lambda_0 +  \sum_{i=1}^{\ell} \lambda_i Q_i (\underline x)$. The polynomial $F_i(\lambdabar,\xbar)$ is the image of $P_i(\underline x, \lambda_0)$ by this isomorphism. Hence it is irreducible in $R[\underline x,\underline \lambda]$.

To see that $\deg_{\underline x}(F_i) \geq 1$, write $F_i$ as a polynomial in $\lambda_1$. The leading coefficient is $P_{i\rho_i}(\underline x) Q_1(\underline x)^{\rho_i}$; it is of positive degree in $\underline x$ since $Q_1$ is by assumption. This proves that 
$\deg_{\underline x}(F_i) \geq 1$.

In the case $\degP_i=1$, irreducibility of $F_i(\lambdabar,\xbar)$ in $\overline K[\underline x,\underline \lambda]$ follows from the above case, applied with $R$ taken to be $\overline K$, and the fact that the polynomial $P_i(\underline x,y)=A_i(\underline x) + B_i(\underline x)\hskip 1pt y$ is irreducible in $\overline K[\underline x,y]$. Namely $P_i(\underline x,y)$ is of degree $1$ in $y$ and is primitive \hbox{w.r.t.} $\overline K[\underline x]$. Primitivity follows from the fact that, as $A_i$ and $B_i$ are relatively prime in $R[\underline x]$, then 

\noindent 
- they are relatively prime in $K[\underline x]$ (an application of Gauss's lemma), and,

\noindent
- they are relatively prime in $\overline K[\underline x]$. 
For lack of reference for this last point, we provide below a quick argument. 

Prove by induction on $n$ that for every field $K$, for every nonzero $A,B\in K[\underline x]$, if $A$ and $B$ have a common divisor $D\in \overline K[\underline x]$ with $\deg(D)>0$, they have a common divisor $C\in K[\underline x]$ with $\deg(C)>0$. The case $n=1$  follows from the B\'ezout theorem. Then, for $n\geq 2$, if $D$ is as in the claim, we may assume that $\deg_{(x_2,\ldots,x_n)}(D)>0$. Observe then that $D$ divides $A$ and $B$ in $\overline{K(x_1)}[x_2,\ldots,x_n]$. By induction $A$ and $B$ have a common divisor $C\in K(x_1)[x_2,\ldots,x_n]$ with $\deg_{(x_2,\ldots,x_n)}(C)>0$. Using Gauss's lemma, one easily constructs a polynomial $C_0 = c(x_1) C \in 
  K[x_1][x_2,\ldots,x_n]$ (with $c(x_1)\in K[x_1]$) dividing both $A$ and $B$ in $K[x_1][x_2,\ldots,x_n]$.
 \vskip 1mm
  
 \noindent
 (b) If the claim is false, there is an irreducible polynomial $p \in R[\underline x]$ such that
 $\Pi(\underline x, M(\underline x)) = 0$ in the quotient ring $R[\underline x]/(p(\underline x))$ for all $M\in R[\underline x]$.
 But $R[\underline x]/(p(\underline x))$ is an integral domain, and it is infinite. Indeed, if $p$ is nonconstant, say $d=\deg_{x_1}(p)\geq 1$, the elements $\sum_{i=0}^{d-1} r_i x_1^i$ with $r_0,\ldots,r_{d-1}\in R$ are infinitely many different elements in
 $R[\underline x]/(p(\underline x))$; and if $p\in R$, then the quotient ring 
 is $R/(p)[\underline x]$, which is infinite too. Conclude that the polynomial $\Pi(\underline x, y)$ which has infinitely many 
 roots in $R[\underline x]/(p(\underline x))$ is zero in the ring $(R[\underline x]/(p(\underline x))[y]$. As this ring is an integral domain, there is an index $i\in\{1,\ldots,s\}$ such that $P_i(\underline x,y)$ is zero in $(R[\underline x]/(p(\underline x))[y]$. This contradicts $P_i(\underline x, y)$ being primitive w.r.t. $R[\underline x]$.
\end{proof}

Denote the set of polynomials $F_1,\ldots,F_s$ by ${\underline F}$
and consider the subset 
\vskip 1mm

\centerline{${H}_{R}({\underline F})  \subset R^{{\ell}+1}$,} 
\vskip 1mm

\noindent
of all $(\ell+1)$-tuples $\underline \lambda^\ast = (\lambda_0^\ast,\ldots,\lambda_\ell^\ast)$ such that 
$F_i(\underline \lambda^\ast, \underline x)$ is irreducible in $R[\underline x]$, for each $i=1,\ldots,s$. It can be equivalently viewed as the set of all 
polynomials of the form $m(\underline x) = \sum_{j=0}^\ell m_j Q_j(\underline x)$ ($m_0,\ldots,m_\ell \in R$) such that 
$P_i(\underline x, m(\underline x))$ is irreducible in $R[\underline x]$, $i=1,\ldots,s$.

Theorems \ref{thm:schinzel1} -- \ref{thm:Main}
will be obtained {\it via}
the following special case of our situation: for a given $\underline d=(d_1,\ldots,d_n) \in (\Nn^\ast)^n$,  the polynomials $Q_i$ are 
all the monic monomials $Q_0,Q_1,\ldots,Q_{N_{\underline d}}$ in ${\mathcal Pol}_{R,n,\underline d}$. The polynomial
\vskip 1mm

\centerline{$M_{\underline d}(\lambdabar,\xbar) = \sum_{i=0}^{N_{\underline d}} \lambda_i \hskip 1pt Q_i(\underline x)$} 

\vskip 1mm

\noindent
is then the {\it generic polynomial in $n$ variables of $i$-th partial degree $d_i$, $i=1,\ldots,n$}, and 
Theorems \ref{thm:schinzel2} and \ref{thm:Main} are about the set
\vskip 1mm  
  
\centerline{${H}_{R}({\underline F}) = {\mathcal Irr}_{n}(R,{\underline P})\cap {\mathcal Pol}_{R,n,{\underline d}}$ }
\vskip 1mm

For example, anticipating on the reminder on Hilbertian fields in \S \ref{ssec:reminder}, 
we can immediately prove this statement, already alluded to in \S \ref{sec:intro}.
\vskip 2,5mm

\noindent
{\bf Addendum to Theorem \ref{thm:schinzel1}.}  {\it The set ${\mathcal Irr}_{n}(R,\underline P)$ is Zariski-dense in ${\mathcal Pol}_{R,n,\underline d}$ for every $\underline d\in (\Nn^\ast)^n$, in each of these two situations:
\vskip 0,5mm

\noindent
{\rm (a)} $R=K$ is a totally Hilbertian field, 
\vskip 0,5mm

\noindent
{\rm (b)} $R=K$ is a Hilbertian field and $\deg_y(P_1) = \ldots = \deg_y(P_s) = 1$. 
}
\vskip 2,5mm

\begin{proof} By definition, ${H}_{K}({\underline F})$ is a {\it Hilbert subset}. Furthermore, from Remark \ref{rem:degree1_Hilbertian}, it contains a {\it separable Hilbert subset} if $\deg_y(P_1) = \ldots = \deg_y(P_s) = 1$. It follows from the definitions that ${H}_{K}({\underline F})$ is Zariski-dense in $K^{N_{\underline d}+1} = {\mathcal Pol}_{K,n,{\underline d}}$ in both situations. One does not even need to assume that  $\displaystyle d_1+\cdots+d_n \geq \max_{1\leq i\leq s} \deg_{\underline x}(P_i)+2$; the statement holds for example \hbox{for $d_1=\ldots = d_n=1$.}\end{proof}
\vskip 1mm

When $R$ is more generally a ring, we have to further guarantee that: 

\noindent
- the Hilbert subset ${H}_{K}({\underline F})$ contains $(\ell +1)$-tuples with coordinates in $R$,

\noindent
- for some of these $(\ell +1)$-tuples $\underline \lambda^\ast$, the corresponding polynomials $F_i(\lambdabar^\ast,\xbar)$ are primitive  \hbox{w.r.t.} $R$, and so irreducible in $R[\underline x]$.
 
For $R=k[u_1,\ldots,u_r]$, polynomials in $R[\underline x]$ can be viewed as polynomials in at least two variables over the field $k$. 
We explain in  \S \ref{sec:ngeq2} how geometric specialization techniques can be used, if $k$ is also infinite.
For more general rings $R$, more arithmetic specialization tools are needed, which we develop in \S \ref{sec:Hilbert}. The specific argument for the primitivity point is given in \S \ref{ssec:schinzel_proof}; it takes advantage of the special form of the polynomial $F_i$ and, as mentioned before, cannot extend  to arbitrary polynomials $F \in R[\lambdabar,\xbar]$.

\section{The geometric part} \label{sec:ngeq2}

Lemma \ref{lemma:general2} is our specialization tool here. Based on results of Bertini, Krull and Noether, it is in the same vein as those from \cite{BDN1}, \cite{BDN3}.
We prove it in \S \ref{ssec:general}, then deduce Theorem \ref{thm:Main2} in \S \ref{ssec:proof-Dirichlet}.

\subsection{The specialization lemma} \label{ssec:general} Notation is as in \S \ref{sec:preliminary}.  Consider the special case of the general situation from \S \ref{sec:preliminary} for which $s=1=\rho_1$. One degree $1$ polynomial $P(\underline x,y)$ is given: $P(\underline x,y) = A(\underline x) + B(\underline x) y$ with $A, B\in R[\underline x]$ two nonzero relatively prime polynomials,
or $P(\underline x,y)=y$. 
We then have:

\vskip 1mm
\centerline{$\begin{matrix}
F(\lambdabar,\xbar) & = A(\xbar) + B(\xbar) \left(\sum_{j=0}^\ell \lambda_j Q_j(\underline x)\right)\hfill \\
& = A(\xbar) + \lambda_0 B(\xbar)+ \lambda_1 B(\xbar) Q_1(\xbar) + \cdots
  +\lambda_\ell B(\xbar) Q_\ell(\xbar)
\end{matrix}$}
\vskip 1mm

\begin{lemma} \label{lemma:general2} Assume that $n\geq 2$, $R=K$ is an algebraically closed field
and the following holds {\rm (}which implies $\ell \geq 1${\rm )}:
\vskip 0,2mm

{\rm (a)} there is an index $i_0\in \{1,\ldots,\ell\}$ such that 

\hskip 3mm - $\deg(Q_{i_0})\not\equiv 0$ modulo $p$ if $\charac(K) = p >0$, 

\hskip 3mm - $\deg(Q_{i_0})\not=0$ if $\charac(K) = 0$.

\vskip 0,5mm

{\rm (b)} there is \underbar{no} polynomial $\chi \in K[\underline x]$ such that $A,B,Q_1,\ldots,Q_\ell \in K[\chi]$.
\vskip 0,6mm

\noindent
Then the set ${H}_{K}({F})$ of all $(\ell+1)$-tuples $\underline \lambda^\ast = (\lambda_0^\ast,\ldots,\lambda_\ell^\ast)$ such that 
$F(\underline \lambda^\ast, \underline x)$ is irreducible in $K[\underline x]$  contains a nonempty Zariski 
open subset of $K^{\ell+1}$.
\end{lemma}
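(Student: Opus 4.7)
My approach is to show that $F(\underline\lambda,\underline x)$ is absolutely irreducible over $K(\underline\lambda)$, and then invoke a Noether-type irreducibility specialization theorem.

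Set $L=K(\lambda_1,\ldots,\lambda_\ell)$ and consider
\[
\theta(\underline x)=-\frac{A(\underline x)}{B(\underline x)}-\sum_{j=1}^{\ell}\lambda_j Q_j(\underline x)\in L(\underline x),
\]
with $A/B=0$ in the case $P=y$. Since $F$ is of degree $1$ in $\lambda_0$ with leading coefficient $B(\underline x)$, it factors as $F=B(\underline x)(\lambda_0-\theta)$ in $L(\underline x)[\lambda_0]$. Therefore $\mathcal V=V(F)\subset\Aa^{\ell+1}\times\Aa^n$ is birational to $\Aa^{n+\ell}_K$ with function field $L(\underline x)$, and the inclusion $K(\underline\lambda)\hookrightarrow L(\underline x)$ arising from the first projection identifies $K(\underline\lambda)$ with the subfield $L(\theta)$. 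Absolute irreducibility of $F$ over $K(\underline\lambda)$ is thus equivalent to $L(\theta)$ being algebraically closed in $L(\underline x)$.

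To prove the latter, I argue by contradiction. Since $L(\underline x)$ is purely transcendental over $L$ and $\theta\notin L$ (guaranteed by $\deg Q_{i_0}>0$ coming from assumption (a)), Lüroth's theorem for $1$-dimensional intermediate fields of purely transcendental extensions gives, should $L(\theta)$ fail to be algebraically closed in $L(\underline x)$, a rational function $\chi\in L(\underline x)$ and $h\in L(T)$ with $\deg h\geq 2$ and $\theta=h(\chi)$. Assumption (a) forces $h$ to be \emph{separable}: were $h$ inseparable, one would have $h\in L(T^p)$, hence $\theta\in L(\chi^p)$; differentiating $\theta=-A/B-\sum_j\lambda_j Q_j$ with respect to $\lambda_{i_0}$ would then place $Q_{i_0}\in L(\chi^p)$, and a degree-in-$\underline x$ comparison would force $p\mid\deg Q_{i_0}$, contradicting (a). With $h$ separable, specializing $\lambda_1,\ldots,\lambda_\ell$ to sufficiently generic values in $K$ descends $\theta=h(\chi)$ to a decomposition over $K$, and the polynomial nature of $A,B,Q_1,\ldots,Q_\ell$ then upgrades this to the existence of a single polynomial $\chi'\in K[\underline x]$ with $A,B,Q_1,\ldots,Q_\ell\in K[\chi']$, contradicting assumption (b).

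Once absolute irreducibility of $F$ over $K(\underline\lambda)$ is established, a Noether-type irreducibility theorem yields a nonempty Zariski open subset $U\subset\Aa^{\ell+1}_K$ such that $F(\underline\lambda^\ast,\underline x)$ remains absolutely irreducible---hence irreducible, since $K=\overline K$---in $K[\underline x]$ for every $\underline\lambda^\ast\in U(K)$; this gives $U(K)\subseteq{H}_{K}({F})$. The main obstacle is the descent step: converting the \emph{rational} decomposition $\theta=h(\chi)$ over the transcendental base $L$ into a \emph{polynomial} identity over $K$ that simultaneously captures $A,B,Q_1,\ldots,Q_\ell$ so that hypothesis (b) can be invoked. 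The polynomial nature of the data, combined with the separability secured by (a), is precisely what makes this descent feasible; without (a), an inseparable pathology could decouple the decomposition of $\theta$ from any visible polynomial relation among $A,B,Q_1,\ldots,Q_\ell$ in $K[\underline x]$.
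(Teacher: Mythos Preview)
Your high-level strategy matches the paper's: both aim to show that $F$ is irreducible in $\overline{K(\underline\lambda)}[\underline x]$ and then invoke the Bertini--Noether specialization theorem. The divergence is in how absolute irreducibility is established, and the step you yourself flag as ``the main obstacle'' is a genuine gap, not merely a technicality.

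The paper argues by contradiction via the Bertini--Krull theorem. Assuming $F$ is reducible over $\overline{K(\underline\lambda)}$, Bertini--Krull produces, outside the $p$-th-power case disposed of by~(a), two polynomials $\phi,\psi\in K[\underline x]$ \emph{independent of $\underline\lambda$} and a common degree $\delta$ such that each of $A,\,B,\,BQ_1,\ldots,BQ_\ell$ is individually a degree-$\delta$ homogeneous form in $\phi,\psi$. An induction on $\delta$ then forces $\beta\phi+\gamma\psi=1$ for suitable scalars, whence $A,B,Q_1,\ldots,Q_\ell\in K[\chi]$ for a single polynomial $\chi$, contradicting~(b).

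Your L\"uroth route yields only a single identity $\theta=h(\chi)$ with $h\in L(T)$ and $\chi\in L(\underline x)$, both a~priori depending on $\lambda_1,\ldots,\lambda_\ell$. Specializing $\underline\lambda$ to a point of $K^\ell$ decomposes the particular combination $-A/B-\sum_j\lambda_j^\ast Q_j$, but different specializations may yield unrelated $\chi^\ast$; nothing in the argument forces a single $\chi'\in K[\underline x]$ to capture $A,B,Q_1,\ldots,Q_\ell$ \emph{individually}. That uniform-in-$\underline\lambda$ structure is precisely what Bertini--Krull delivers for free and what your descent step would have to manufacture from scratch. Absent a concrete mechanism for this (and none is indicated), the proof is incomplete.

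Your separability argument, incidentally, is fine once spelled out: in characteristic $p$, differentiating $\theta=\tilde h(\chi^p)$ in $\lambda_{i_0}$ kills the $\chi$-derivative term, giving $Q_{i_0}\in L(\chi^p)\subset L(\underline x^p)$, hence $Q_{i_0}\in K[\underline x^p]$ and $p\mid\deg Q_{i_0}$, contradicting~(a). But this does not help with the descent.
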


\begin{remark} Assumptions (a) and (b) can probably be improved but the following examples show they cannot be totally removed. In each of them, $F(\lambdabar,\xbar)$ is reducible in $\overline{K(\lambdabar)} [\underline x]$ and every non-trivial factorization yields a Zariski dense subset of $\lambdabar\point \in K^{\ell+1}$ such that $F(\lambdabar\point,\xbar)$ is reducible in $K[\xbar]$. 
\vskip 1mm

\noindent
$\bullet$ If $A,B,Q_1,\ldots,Q_\ell \in K[\chi]$ for some $\chi \in K[\underline x]$, one can write 
$F(\lambdabar,\xbar) = h(\chi)$ with $h\in \overline{K(\lambdabar)} [u]$. If $\deg(h)\geq 2$, $h$ is reducible and so is 
$F(\lambdabar,\xbar)$ in $\overline{K(\lambdabar)} [\underline x]$.

\vskip 1mm

\noindent
$\bullet$ For $A = x_1^2$, $B = -x_2^2$, $\ell = 1$ and $Q_0= Q_1=1$, we have
\vskip 0,5mm

\centerline{$F(\lambdabar,\xbar) = x_1^2 - \lambda_0 x_2^{2} -\lambda_1 x_2^2= (x_1 - \sqrt{\lambda_0+\lambda_1} \hskip 1,5pt x_2)(x_1+\sqrt{\lambda_0+\lambda_1} \hskip 1,5pt x_2)$.}
\vskip 0,5mm

\vskip 0,5mm

\noindent
$\bullet$ If ${\rm char}(K)=p>0$, for $A = x_1^p$, $B = x_2^p$, $\ell = 1$, $Q_0=1$, $Q_1=x_2^p$, we have
\vskip 0,5mm

\centerline{$F(\lambdabar,\xbar) = x_1^p + \lambda_0 x_2^{p} + \lambda_1 x_2^{2p}= (x_1 + \lambda_0^{1/p} x_2 + \lambda_1^{1/p} x_2^{2})^p$.}
\vskip 0,5mm
\end{remark}

\begin{proof}
Assume that the conclusion of Lemma \ref{lemma:general2} is false.
From the Bertini-Noether theorem \cite[Prop.~9.4.3]{FrJa}, $F(\lambdabar,\xbar)$ is reducible in $\overline{K(\lambdabar)} [\underline x]$. Clearly then polynomials $F(\underline x, \underline \lambda^\ast)$ are reducible in $K[\underline x]$ for all $\underline \lambda^\ast \in K^{\ell+1}$ such that $\deg(F(\underline x, \underline \lambda^\ast)) = \deg_{\underline x}(F)$.
The Bertini-Krull theorem \cite[Theorem 37]{schinzel-book2000} then yields that one 
of the following conditions holds:

\noindent
\begin{enumerate}
  \item \label{iit:charp} $\charac(K) = p >0$ and
    $F(\lambdabar, \xbar) \in K[\lambdabar,\xbar^p]$ with $\xbar^p =
    (x_1^p,\ldots,x_n^p)$,
  \item \label{iit:hom} there exist $\phi,\psi \in K[\xbar]$ with
    $\deg_{\xbar}(F) > \max(\deg(\phi), \deg(\psi))$ satisfying the following:
    there is an integer $\delta\geq 1$  and $\ell +2$ polynomials $H, H_0,H_1,\ldots,H_\ell \in K[u,v]$
    homogeneous of degree $\delta$ such that
$$\left\lbrace
\begin{array}{c}
A(\xbar) = H(\phi(\xbar),\psi(\xbar)) = \sum_{i=0}^{\delta} h_{i} \hskip 2pt
\phi(\xbar)^i \psi(\xbar)^{\delta-i} \hfill \\
B(\xbar) = H_0(\phi(\xbar),\psi(\xbar)) =  \sum_{i=0}^{\delta} h_{0i} \hskip 2pt
\phi(\xbar)^i \psi(\xbar)^{\delta-i}\ \hfill \\
BQ_1(\xbar) = H_1(\phi(\xbar),\psi(\xbar)) =  \sum_{i=0}^{\delta} h_{1i} \hskip 2pt
\phi(\xbar)^i \psi(\xbar)^{\delta-i}\ \hfill \\
\ \vdots \ \\
BQ_\ell(\xbar) = H_\ell(\phi(\xbar),\psi(\xbar)) = \sum_{i=0}^{\delta}
h_{\ell i} \hskip 2pt\phi(\xbar)^i \psi(\xbar)^{\delta-i}\\
\end{array}
\right.$$
\end{enumerate}

\noindent
The rest of the proof consists in ruling out both conditions (1) and (2).

For condition (1), this readily follows from the assumption on $\deg(Q_{i_0})$: if $\charac(k) = p >0$, the polynomials $B$ and $BQ_{i_0}$ cannot be both in $K[\underline x^p]$.

Assume condition (2) holds. Note that the polynomials $\phi$ and $\psi$ are relatively prime in 
$K[\underline x]$ as a consequence of $A,B$ being relatively prime in 
$K[\underline x]$. We claim that the two conditions
\vskip 1mm

\centerline{$\left\lbrace
\begin{array}{c}
B(\xbar) = H_0(\phi(\xbar),\psi(\xbar))\ \hfill \\
BQ_{i_0}(\xbar) = H_{i_0}(\phi(\xbar),\psi(\xbar)) \hfill \\
\end{array}
\right.$}
\vskip 1mm

\noindent
lead to this conclusion: there is $(\beta,\gamma)\in K^2$ such that $\beta 
  \phi(\underline x) +\gamma \psi(\underline x) = 1$. We show it by induction on the common degree $\delta$ of 
  $H_0$ and $H_{i_0}$.

For $\delta=1$, write $B= a \phi+b \psi$ and $BQ_{i_0} = a^\prime \phi+b^\prime \psi$ with $a,b,a^\prime,b^\prime
  \in K$. If $\deg(B)=0$, then $a \phi+b \psi \in K\setminus\{0\}$ and the claim is established. Assume $\deg(B)>0$. If $ab^\prime - a^\prime b\not=0$, any irreducible factor $\pi$ of $B$ divides 
$a\phi+ b\psi$ and $a^\prime \phi + b^\prime \psi$,
hence divides both
$\phi$ and $\psi$ in $K[\underline x]$, which contradicts $\phi$ and $\psi$ being relatively prime. As there is at least one such factor $\pi$, we have
 $(a,b)= \kappa (a^\prime,b^\prime)$ for some nonzero $\kappa \in K$. It follows that $B= \kappa BQ_{i_0}$ and $\deg(Q_{i_0})=0$. 
 This contradicts our assumption. Hence the claim is established for $\delta=1$.

Assume the claim is proved for $\delta\geq 1$ and that 
\vskip 1mm

\centerline{$\left\{ \begin{matrix}
B= \prod_{j=1}^{\delta+1} (a_j\phi + b_j \psi) \\
BQ_{i_0}= \prod_{j=1}^{\delta+1} (a_j^\prime\phi + b_j^\prime \psi)\\
  \end{matrix} \right.$}
 \vskip 1mm
 
\noindent
for some $(\delta+1)$-tuples $((a_1,b_1),\ldots,(a_{\delta+1},b_{\delta+1}))$ and $((a^\prime_1,b^\prime_1),\ldots,(a^\prime_{\delta+1},b^\prime_{\delta+1}))$ with components in $K^2$.

If $\deg(B)=0$, all polynomials $a_j\phi + b_j \psi$, $j=1,\ldots,\delta+1$, are of degree $0$. Hence there exists $(\beta,\gamma)\in K^2$ such that $\beta 
  \phi +\gamma \psi = 1$. Assume $\deg(B)>0$. As above in the case $\delta=1$, use an irreducible factor of $B$ in 
  $K[\underline x]$ to conclude that there exist two indices $j$, $j^\prime$ such that 
 this irreducible factor divides both $a_j\phi + b_j \psi$ and $a_{j^\prime}^\prime \phi + b_{j^\prime}^\prime \psi$. We may assume that 
 $j=j^\prime = \delta+1$. As above in the case $\delta=1$, it follows from $\phi$, $\psi$ relatively prime in $K[\underline x]$
 that  \vskip 1mm
 
 \centerline{$a_{\delta+1}\phi + b_{\delta+1}\psi = \kappa (a_{{\delta+1}}^\prime \phi + b_{{\delta+1}}^\prime \psi)$}
 \vskip 1mm

 \noindent
 for some nonzero $\kappa \in K$. Consider the polynomial $B_1 = B/(a_{\delta+1}\phi + b_{\delta+1} \psi)$. 
 It is nonzero and we have 
 \vskip 1mm

\centerline{$\left\{ \begin{matrix}
B_1= \prod_{j=1}^{\delta} (a_j\phi + b_j \psi) \\
\kappa B_1Q_{i_0}= \prod_{j=1}^{\delta} (a_j^\prime\phi + b_j^\prime \psi)\\
  \end{matrix} \right.$}
\vskip 1mm

\noindent
From the induction hypothesis, applied to $B_1$ and $\kappa B_1Q_{i_0}$, there is $(\beta,\gamma)\in K^2$ such that $\beta 
  \phi +\gamma \psi = 1$. This completes the proof of our claim.

\noindent

Fix $(\beta, \gamma)\in K^2$ such that $\beta \phi +\gamma \psi = 1$. Pick $(a,b) \in K^2$ such that 
$a\gamma - \beta b\not= 0$
and set $\chi = a\phi + b\psi$. 
We have $\deg(\chi) >0$. 
Then $K \phi + K \psi = K \chi + K$ and so $A,B,BQ_1,\ldots,BQ_\ell$ are in $K[\chi]$. It follows that $A,B,Q_1,\ldots,Q_\ell$ are in $K[\chi]$ too. Here is an argument. Fix $i\in \{1,\ldots,\ell\}$. Since $B, BQ_i\in K[\chi]$, $Q_i$ writes $Q_i=(p/q)(\chi)$ for some $p,q\in K[t]$ relatively prime. But then there exists $u,v\in K[t]$ such that $u(\chi) p(\chi)+v(\chi)q(\chi)=1$. Since $q(\chi)$ divides $p(\chi)$ in $K[\underline x]$, we have $\deg(q)=0$.
Hence $Q_i\in K[\chi]$.
\end{proof}

\subsection{Proof of Theorem \ref{thm:Main2}} \label{ssec:proof-Dirichlet}
Assume $n\geq 2$, fix an infinite field $k$, two nonzero relatively prime polynomials $A$, $B$ in $k[\underline x]$ and a $n$-tuple $\underline d \in (\Nn^\ast)^n$. As explained in \S \ref{sec:preliminary}, consider the special case of Lemma \ref{lemma:general2}  for which the polynomials $Q_i$ are all the monomials $Q_0,\ldots,Q_{N_{\underline d}}$ in ${\mathcal Pol}_{k,n,\underline d}$ (with $Q_0=1$). We then have $F(\lambdabar,\xbar)=  A(\xbar) + B(\xbar) M_{\underline d}(\underline \lambda,\xbar)$ with $M_{\underline d} = \sum_{i=0}^{N_{\underline d}} \lambda_i Q_i $ the generic polynomial in $n$ variables of partial degree $d_i$  in $x_i$, $i=1,\ldots,n$.

Lemma \ref{lemma:general2} concludes that ${H}_{k}({F}) = {\mathcal Irr}_{n}(\overline k,A,B)\cap {\mathcal Pol}_{k,n,{\underline d}}(\overline k)$ contains a nonempty Zariski open subset of ${\mathcal Pol}_{k,n,{\underline d}}(\overline k)$. 
As $k$ is infinite, the set ${\mathcal Irr}_{n}(\overline k,A,B)\cap {\mathcal Pol}_{k,n,{\underline d}}(k)$ also contains a nonempty Zariski open subset of ${\mathcal Pol}_{k,n,{\underline d}}(k)$. This proves Theorem \ref{thm:Main2}.

\begin{remark}
(a) If $k$ is finite however, non emptyness of ${\mathcal Irr}_{n}(k,A,B)$ cannot be guaranteed at this stage: each finite set ${\mathcal Irr}_{n}(k,A,B) \cap {\mathcal Pol}_{k,n,\underline d}(k)$ ($\underline d \in (\Nn^\ast)^n$) could be covered by an hypersurface.  
For infinite fields, Theorem \ref{thm:Main2} clearly covers Theorem \ref{thm:Main}. We will use a different method, in \S  \ref{sec:Hilbert}, to prove Theorem \ref{thm:Main} for finite fields (which will also reprove the infinite case).
\vskip 0,5mm

\noindent
(b) Lemma \ref{lemma:general2} can be used in other situations. For exam\-ple, let $A,B,C\in K[\underline x]$ be nonzero polynomials, with $A$, $B$ relatively prime and $C\in K[\underline x]$ distinct from $A$, $B$, up to multiplicative constants in $K^\times$. Assume hypotheses (a) and (b) of Lemma \ref{lemma:general2} respectively hold for $Q_{i_0} = C$ and for $A,B,C$. Lemma \ref{lemma:general2} shows that the set of $(\lambda, \mu) \in K^2$ such that $A+B(\lambda C+ \mu)$ is irreducible in $K[\underline x]$ contains a nonempty Zariski open subset of $\Aa^{2}_{K}$.
\end{remark}

\section{The Hilbert side} \label{sec:Hilbert} 

This section introduces the notion of Hilbertian ring and establishes some corresponding 
specialization tools, which will be important ingredients of the proofs of the main theorems in \S \ref{sec:Schinzel}.

\subsection{Basics from the Hilbertian field theory} \label{ssec:reminder}
We recall the basic definitions and refer to chapters 12 and 13 of \cite{FrJa} for more. Other classical references include 
\cite{schinzel-book}, \cite{schinzel-book2000}, \cite{langFG}.

Consider a field $K$ and two tuples $\underline \lambda = (\lambda_1,\ldots, \lambda_r)$ and $\underline x= (x_1,\ldots,x_n)$ ($r\geq 1$, $n\geq 1$) of indeterminates. Given $m$ polynomials $f_1(\underline \lambda,\underline x), \ldots, f_m(\underline \lambda,\underline x)$ ($m\geq 1$) in $\underline x$ with coefficients in $K(\underline \lambda)$, irreducible in the ring $K(\underline \lambda)[\underline x]$
and a polynomial $g\in K[\underline \lambda]$, $g\not= 0$,
consider the set

\vskip 1,5mm

\centerline{$\displaystyle H_K(f_1,\ldots,f_m;g)=\left\{ \underline \lambda^\ast \in K^r \hskip 1mm \left | 
\begin{matrix}
\hskip 1mm f_i( \underline \lambda^\ast,  \underline x)\hskip 1mm  \hbox{\rm irreducible in } K[\underline x] \cr
\hskip 1mm \hbox{\rm for each }i=1,\ldots,m,  \hfill \cr
\hskip 1mm \hbox{\rm and}\ g(\underline \lambda^\ast) \not=0. \hfill \cr
\end{matrix} \right.
\right\}$}

\vskip 1,5mm
\noindent
Call $H_K(f_1,\ldots,f_m;g)$ a {\it Hilbert subset of} $K^r$. If in addition $n=1$ and each $f_i$ is separable in $x$ (i.e., $f_i$ has no multiple root in $\overline{K(\underline \lambda)}$), call 
$H_K(f_1,\ldots,f_m;g)$ a {\it separable Hilbert subset of $K^r$}. The field $K$ is called {\it Hilbertian} if every separable Hilbert subset 
of $K^r$ is nonempty  and {\it totally Hilbertian} if every Hilbert subset of $K^r$ is nonempty ($r\geq 1$).
Equivalently, ``nonempty'' can be replaced by ``Zariski-dense in $K^r$'' in 
the definitions.
As recalled earlier, a field $K$ is totally Hilbertian if and only if it is Hilbertian and the imperfectness condition holds: $K$ is imperfect if
of characteristic $p>0$.

Classical Hilbertian fields include the field $\Qq$, the rational function fields $\Ff_q(u)$ (with $u$ some indeterminate) and all of their finitely generated extensions \cite[Theorem 13.4.2]{FrJa}, every abelian extension of $\Qq$ \cite[Theorem 16.11.3]{FrJa}, fields $k((u_1,\ldots,u_r))$ of formal power series in $r\geq 2$ variables over a field $k$ \cite[Theorem 15.4.6]{FrJa}; all of them are also
totally Hilbertian. Algebraically closed fields, the fields $\Rr$, $\Qq_p$ of real, of $p$-adic numbers, more generally Henselian fields are non-Hilbertian.  The fraction field of a UFD $R$ need not be Hilbertian (take $R=\Zz_p$), even if $R$ has infinitely many distinct prime ideals: a counter-example 
is given in \cite[Example 15.5.8]{FrJa}.

{\it Fields with the product formula} provide other examples of Hilbertian fields. Recall from \cite[\S 15.3]{FrJa} that a nonempty set $S$ of primes ${\frak p}$ of $K$, with associated absolute value $|\hskip 2mm |_{\frak p}$, is said to satisfy the product formula if for each ${\frak p}\in S$, there exists $\beta_{{\frak p}} >0$ such that:
\vskip 1,5mm

\noindent
(1) {\it For each $a\in K^\times$, the set $\{{\frak p}\in S \hskip 2pt | \hskip 2pt |a|_{\frak p} \not=1 \}$ is finite and $\prod_{{\frak p}\in S} |a|_{\frak p}^{\beta_{{\frak p}}} = 1$.}

\vskip 1,5mm

\noindent
In this case call $K$ a field with the product formula. From a result of Weissauer, such fields  are Hilbertian \cite[Theorem 15.3.3]{FrJa}.
The fields $\Qq$, $k(\lambda_1,\ldots, \lambda_r)$ with $k$ any field and $r\geq 1$, and their finite extensions, are fields 
with the product formula.

\subsection{Hilbertian ring} \label{ssec:Hilbertian-UFD} The following definition is given in \cite[\S 13.4]{FrJa}. 
\begin{definition} \label{def:HilbertUFD}
An integral domain $R$ with fraction field $K$ is said to be a {\it Hilbertian ring} if every separable Hilbert subset of $K^r$  ($r\geq 1$) contains $r$-tuples $\underline \lambda^\ast= (\lambda_1^\ast, \ldots, \lambda_r^\ast)$ with coordinates in $R$. 
\end{definition}

Since Zariski open subsets of Hilbert subsets remain Hilbert subsets, it is equivalent to require that a Zariski dense subset of tuples $\underline \lambda^\ast$ exist in Definition \ref{def:HilbertUFD}. 
Under the imperfectness assumption, a better property holds for Hilbertian rings, and extends to arbitrary Hilberts sets. 

\begin{proposition} \label{prop:I-HilbertUFD}
Let $R$ be an integral domain such that the fraction field $K$ is imperfect if of characteristic $p>0$. 
The following are equivalent.

\noindent
{\rm (i)} $R$ is a Hilbertian ring.

\noindent
{\rm (ii)} Every separable Hilbert subset of $K$ contains elements $\lambda^\ast \in R$.

\noindent
{\rm (iii)} For every nonzero $\lambda_0^\ast\in R$ and every $\underline a=(a_1,\ldots,a_r)\in R^r$, every Hilbert subset of $K^r$  {\rm (}$r\geq 1${\rm )} contains $r$-tuples $\underline \lambda^\ast= (\lambda_1^\ast, \ldots, \lambda_r^\ast)$ with nonzero coordinates in $R$ and such that 
$\lambda_i^\ast \equiv a_i\hskip 1mm [{\rm mod}\hskip 1mm \lambda_0^\ast \cdots \lambda_{i-1}^\ast]$, $i=1,\ldots,r$.
\end{proposition}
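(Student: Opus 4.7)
Plan: The implications $(iii) \Rightarrow (i)$ and $(i) \Rightarrow (ii)$ are immediate: for the first, apply (iii) with $\lambda_0^\ast = 1$ so the congruences become vacuous, noting that every separable Hilbert subset is a fortiori a Hilbert subset; for the second, take $r = 1$ in Definition \ref{def:HilbertUFD}. The substance is $(ii) \Rightarrow (iii)$, which I would break into three stages.

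Stage 1 (from $r=1$ to arbitrary $r$, separable case): I would show by induction on $r$ that every separable Hilbert subset of $K^r$ contains elements of $R^r$. The base case is (ii). Observe first that $R$ is infinite: in characteristic $0$ it contains $\Zz$, while in characteristic $p$ the imperfectness hypothesis excludes $K$ finite (finite fields being perfect), and an infinite field cannot have a finite subring with the same fraction field. For the inductive step, a Bertini--Noether style argument (cf.\ \cite[Cor.~12.2.3]{FrJa}) gives that outside a finite set of bad values of $\lambda_r^\ast \in K$, the polynomials $f_i(\lambda_1, \ldots, \lambda_{r-1}, \lambda_r^\ast, x)$ remain irreducible in $K(\lambda_1, \ldots, \lambda_{r-1})[x]$ and separable in $x$, and $g(\lambda_1, \ldots, \lambda_{r-1}, \lambda_r^\ast)$ remains a nonzero polynomial in $(\lambda_1,\ldots,\lambda_{r-1})$. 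Picking such a $\lambda_r^\ast$ in the infinite ring $R$ and applying the inductive hypothesis to the resulting separable Hilbert subset of $K^{r-1}$ concludes.

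Stage 2 (from separable in one variable $x$ to arbitrary Hilbert subsets): I would invoke Uchida's theorem \cite{uchida}, \cite[Prop.~12.4.3]{FrJa}. Under the imperfectness hypothesis, its proof associates to any Hilbert subset $H_K(f_1, \ldots, f_m; g)$ of $K^r$ a \emph{separable} Hilbert subset $H_K(\tilde f_1, \ldots, \tilde f_m; \tilde g) \subseteq H_K(f_1, \ldots, f_m; g)$, where each $\tilde f_i \in K[\underline\lambda, x]$ is irreducible in $K(\underline\lambda)[x]$ and separable in the single variable $x$. Applying Stage 1 to this separable Hilbert subset then produces elements of $R^r$ lying in the original one.

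Stage 3 (nonzero coordinates and congruences): given $H = H_K(f_1, \ldots, f_m; g)$ in $K^r$, $\lambda_0^\ast \in R \setminus \{0\}$ and $a_1, \ldots, a_r \in R$, introduce indeterminates $\underline\mu = (\mu_1, \ldots, \mu_r)$ and set recursively
\begin{equation*}
\Lambda_i(\underline\mu) = a_i + \lambda_0^\ast\, \Lambda_1(\underline\mu) \cdots \Lambda_{i-1}(\underline\mu)\, \mu_i \in R[\mu_1, \ldots, \mu_i].
\end{equation*}
Each $\Lambda_i$ is a nonzero polynomial by induction on $i$ (its $\mu_i$-coefficient is $\lambda_0^\ast \Lambda_1 \cdots \Lambda_{i-1} \neq 0$). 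The $K$-algebra map $\Phi \colon K(\underline\lambda) \to K(\underline\mu)$ sending $\lambda_i$ to $\Lambda_i(\underline\mu)$ is a $K$-isomorphism, with inverse $\mu_i \mapsto (\lambda_i - a_i)/(\lambda_0^\ast \lambda_1 \cdots \lambda_{i-1})$, so $\tilde f_i(\underline\mu, \underline x) := f_i(\Lambda_1(\underline\mu), \ldots, \Lambda_r(\underline\mu), \underline x)$ is irreducible in $K(\underline\mu)[\underline x]$ for each $i$, and $\tilde g(\underline\mu) := g(\Lambda_1(\underline\mu), \ldots, \Lambda_r(\underline\mu)) \prod_{i=1}^{r} \Lambda_i(\underline\mu)$ is a nonzero element of $K[\underline\mu]$. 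Stage 2 applied to $H_K(\tilde f_1, \ldots, \tilde f_m; \tilde g)$ yields $\underline\mu^\ast \in R^r$ in it, and $\underline\lambda^\ast := (\Lambda_1(\underline\mu^\ast), \ldots, \Lambda_r(\underline\mu^\ast)) \in R^r$ then has all coordinates nonzero (since $\tilde g(\underline\mu^\ast) \ne 0$), satisfies the required congruences by construction, and lies in $H$. The main obstacle is Stage 2, where the imperfectness hypothesis is indispensable — without it the equivalence genuinely fails in characteristic $p$ — and where one must appeal to Uchida's classical machinery; Stages 1 and 3 are essentially algebraic bookkeeping.
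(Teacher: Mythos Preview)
Your Stage~3 is correct and is a cleaner way to encode all the congruences at once than the paper's step-by-step substitution $\lambda_1 \mapsto \lambda_0^\ast \lambda_1 + a_1$. But Stages~1 and~2 each contain a genuine gap.

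\textbf{Stage 1.} The claim that $f_i(\lambda_1,\ldots,\lambda_{r-1},\lambda_r^\ast,x)$ remains irreducible in $K(\lambda_1,\ldots,\lambda_{r-1})[x]$ for all but finitely many $\lambda_r^\ast \in K$ is false. Take $K=\Qq$, $r=2$, $f = x^2 - \lambda_2$: every square $\lambda_2^\ast\in\Qq$ gives a reducible specialization. Bertini--Noether concerns \emph{absolute} irreducibility, but a one-variable polynomial of degree $\geq 2$ is never absolutely irreducible; preserving irreducibility under specialization of a parameter is exactly the content of the Hilbertian property, not a Zariski-open condition. (Corollary~12.2.3 of \cite{FrJa} is about descent of Hilbert subsets along finite separable extensions and does not help here.)

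\textbf{Stage 2.} Uchida's proof does not show that every Hilbert subset contains a separable one, and in fact this fails. For $K=\Ff_p(t)$ the Hilbert subset $H_K(y^p-\lambda)=K\setminus K^p$ contains no separable Hilbert subset: if $H_K(h_1,\ldots,h_s;\gamma)\subset K\setminus K^p$ with each $h_j$ separable in $x$, then $\tilde h_j(\mu,x)=h_j(\mu^p,x)$ is still irreducible and separable in $K(\mu)[x]$ (purely inseparable base change preserves irreducibility of separable polynomials), so since $K$ is Hilbertian there is $\mu^\ast$ with $(\mu^\ast)^p \in H_K(h_1,\ldots,h_s;\gamma)\subset K\setminus K^p$, which is absurd. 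What Uchida's argument actually produces is a Frobenius twist: an element $b$ (which one can take in $R$) and separable $\tilde Q_j$ such that $\tau\in H_K(\tilde Q_1,\ldots,\tilde Q_\nu)$ forces $\tau^p+b$ into the original Hilbert subset --- not a containment.

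The paper's route sidesteps both issues. It specializes one $\lambda_i$ at a time, but at each step treats the remaining $\lambda$'s \emph{together with} the $x$'s as a single block, applies the Kronecker substitution (Proposition~\ref{prop:kronecker}) to land in $K[\lambda_1,y]$, and then --- since Kronecker can destroy separability --- runs the Ikeda-type Lemma~\ref{lem:ikeda_argument} (the $\tau\mapsto\tau^p+b$ trick with $b\in R$) before invoking~(ii). Your Stage~3 change-of-variables could be grafted onto that argument to handle the congruences in one shot, but Stages~1 and~2 must be replaced by the Kronecker plus Ikeda route.
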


Clearly, it suffices to prove ${\rm (ii)} \Rightarrow {\rm (iii)}$. This is done in \S \ref{ssec:prop-Hilbertian-UFD}  by reducing the number of variables to reach the separable situation $r=n=1$ of Definition \ref{def:HilbertUFD}.
We recall a classical tool.

\subsection{The Kronecker substitution} \label{ssec:kronecker}
Given an arbitrary field $K$, an irreducible polynomial $f\in K[\underline \lambda,\underline y]$, 
of degree $\geq 1$ in $\underline y = (y_1,\ldots,y_m)$ and an integer $\displaystyle D>\max_{{1\leq i\leq m}}\deg_{y_i}(f)$,
the Kronecker substitution is the map
\vskip 1,5mm

\centerline{$S_D: {\mathcal Pol}_{K(\underline \lambda), m,\underline D} \rightarrow {\mathcal Pol}_{K(\underline \lambda), 1,D^{m}}$, \hskip 2mm with $\underline D=(D,\ldots,D)$,}

\vskip 1,5mm

\noindent
deriving from the substitution of $y^{D^{i-1}}$ for $y_i$, $i=1,\ldots,m$, and leaving the coefficients in the field $K(\underline \lambda)$ unchanged. 

\begin{proposition} \label{prop:kronecker} There exist a finite set ${\mathcal S}(f)$ of irreducible polynomials $g\in K[\underline \lambda][y]$ of degree $\geq 1$ in $y$ and a nonzero polynomial $\varphi \in K[\underline \lambda]$ such that the Hilbert subset ${H}_{K}(f)\subset K^r$ contains the Hilbert subset 

\vskip 1,5mm

\centerline{$H_{K}({\mathcal S}(f);\varphi)$}
\vskip 1,5mm

\noindent
Furthermore, the finite set ${\mathcal S}(f)$ can be taken to be the set of irreducible divisors of $S_D(f)$ in $K[\underline \lambda][y]$.
\end{proposition}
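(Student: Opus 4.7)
The plan is to reduce a potential several-variable factorization of $f(\lambdabar^\ast,\underline{y})$ to a one-variable factorization of $S_D(f)(\lambdabar^\ast,y)$ via the Kronecker substitution, and then to exploit the irreducibility of $f$ over $K(\lambdabar)$ to rule out all nontrivial subset-products of the irreducible factors of $S_D(f)$, modulo a single nonzero polynomial condition on $\lambdabar^\ast$. The preliminary observation is that $S_D$, viewed as the $K[\lambdabar]$-algebra homomorphism $K[\lambdabar][\underline{y}]\to K[\lambdabar][y]$ sending $y_i\mapsto y^{D^{i-1}}$, restricts to a $K[\lambdabar]$-linear bijection between the submodule $V$ of polynomials with $\deg_{y_i}<D$ for every $i$ and the submodule $W$ of polynomials in $y$ of degree strictly less than $D^m$ (via the base-$D$ expansion bijection $\{0,\ldots,D-1\}^m\to\{0,\ldots,D^m-1\}$). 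By the choice of $D$, $f\in V$ and $\deg_y(S_D(f))\leq\sum_i \deg_{y_i}(f)\,D^{i-1}\leq D^m-1$.

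First, I factor $S_D(f)=c(\lambdabar)\prod_{j=1}^{J} g_j(\lambdabar,y)$ in $K[\lambdabar][y]$ with $c(\lambdabar)\in K[\lambdabar]\setminus\{0\}$ and each $g_j$ irreducible in $K[\lambdabar][y]$ of positive degree in $y$, and I set $\mathcal{S}(f):=\{g_1,\ldots,g_J\}$. For each partition $\{1,\ldots,J\}=J_1\sqcup J_2$ into two nonempty subsets, the product $G_{J_k}:=\prod_{j\in J_k} g_j$ lies in $W$ and therefore has a unique preimage $H_{J_k}\in V$ under $S_D$. The key claim is that the ``discrepancy''
\[
\Delta_{J_1}(\lambdabar,\underline{y}):=f(\lambdabar,\underline{y})-c(\lambdabar)\,H_{J_1}(\lambdabar,\underline{y})\,H_{J_2}(\lambdabar,\underline{y})
\]
is nonzero in $K[\lambdabar][\underline{y}]$: otherwise $f=c\cdot H_{J_1}H_{J_2}$ in $K(\lambdabar)[\underline{y}]$, a nontrivial factorization since the $H_{J_k}$ have positive $\underline{y}$-degree (as the $G_{J_k}$ are nonconstant in $y$), contradicting the hypothesis that $f$ is irreducible in $K(\lambdabar)[\underline{y}]$. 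I then take $\varphi\in K[\lambdabar]\setminus\{0\}$ to be the product of $c(\lambdabar)$, the $y$-leading coefficients of all the $g_j$, and, for each partition $J_1\sqcup J_2$, one fixed nonzero $\lambdabar$-coefficient of $\Delta_{J_1}$.

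To check $H_K(\mathcal{S}(f);\varphi)\subseteq H_K(f)$, take $\lambdabar^\ast$ in the former and suppose for contradiction that $f(\lambdabar^\ast,\underline{y})=h_1(\underline{y})\,h_2(\underline{y})$ in $K[\underline{y}]$ with both $h_k$ of positive $\underline{y}$-degree. Since $\deg_{y_i}(h_k)\leq\deg_{y_i}(f)<D$, both $h_k$ lie in the specialization $V|_{\lambdabar^\ast}$, and applying $S_D$ gives $S_D(h_1)(y)\,S_D(h_2)(y)=S_D(f)(\lambdabar^\ast,y)=c(\lambdabar^\ast)\prod_j g_j(\lambdabar^\ast,y)$ in $K[y]$. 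The condition $\varphi(\lambdabar^\ast)\neq 0$ preserves the $y$-degrees of the $g_j$ on specialization, and each $g_j(\lambdabar^\ast,y)$ is irreducible in $K[y]$ by hypothesis; unique factorization in $K[y]$ then produces a nontrivial partition $J_1\sqcup J_2$ and $u_1,u_2\in K^{\times}$ with $u_1 u_2=c(\lambdabar^\ast)$ and $S_D(h_k)=u_k G_{J_k}(\lambdabar^\ast,y)$. Since both $h_k$ and $u_k H_{J_k}(\lambdabar^\ast,\underline{y})$ lie in $V|_{\lambdabar^\ast}$ and share the same image under $S_D$, the bijection forces $h_k=u_k H_{J_k}(\lambdabar^\ast,\underline{y})$, whence $\Delta_{J_1}(\lambdabar^\ast,\underline{y})=0$ in $K[\underline{y}]$, contradicting $\varphi(\lambdabar^\ast)\neq 0$. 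The main obstacle is the nonvanishing of each $\Delta_{J_1}$: after applying $S_D$ the several-variable structure of $f$ is destroyed, and it is not a priori clear that no subset-product of the $g_j$ lifts under $S_D^{-1}$ to a multiple of $f$; this is precisely where the irreducibility of $f$ over $K(\lambdabar)$ is used essentially.
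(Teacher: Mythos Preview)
Your argument is correct and is precisely the standard proof of the Kronecker-substitution reduction that the paper cites from \cite[Lemma~12.1.3]{FrJa} (generalized verbatim to $r\geq 1$ parameters): factor $S_D(f)$, rule out each nontrivial subset-product of its irreducible factors via the irreducibility of $f$ in $K(\lambdabar)[\underline y]$, and collect the finitely many nonvanishing conditions into $\varphi$. The paper simply defers to that reference rather than spelling out the proof.
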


\begin{proof}
See \cite[Lemma 12.1.3]{FrJa}. The statement is only stated for $r=1$ but the proof carries over  to the situation $r\geq 1$ by merely changing the single variable for an $r$-tuple $\underline \lambda=(\lambda_1,\ldots,\lambda_r)$ of variables.
\end{proof}

We will also use several times the following observation.

\begin{lemma} \label{lem:imperfect} Let $R$ be a Hilbertian ring with a fraction field 
$K$ of characteristic $p>0$ and imperfect. There are
infinitely many $a\in R$ that are different modulo $K^p$.
\end{lemma}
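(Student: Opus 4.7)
The plan is to interpret ``different modulo $K^p$'' in the multiplicative sense, i.e., pairwise distinct in $K^\ast/(K^\ast)^p$; the Hilbertian hypothesis enters only through the fact that $R$ is infinite (any Hilbertian ring has a Hilbertian, hence infinite, fraction field, and a finite integral domain is necessarily a field).

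First I would exhibit a single element $\alpha \in R \setminus K^p$. By imperfectness of $K$, pick $\beta \in K \setminus K^p$ and write $\beta = u/v$ with $u,v \in R$. Then $\alpha := uv^{p-1} = \beta v^p$ lies in $R$ and in the same multiplicative coset modulo $K^p$ as $\beta$, so $\alpha \notin K^p$.

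The main step is to show that the family $\{\alpha + c^p : c \in R\} \subseteq R$ consists of elements in pairwise distinct multiplicative cosets modulo $K^p$. Suppose $(\alpha + c^p)/(\alpha + c'^p) = d^p$ for some $d \in K^\ast$. Rearranging and using the Frobenius identity $(dc')^p - c^p = (dc' - c)^p$ available in characteristic $p$ yields
\[
\alpha(1 - d^p) \,=\, (dc' - c)^p.
\]
If $d = 1$, the left side vanishes and we get $c = c'$. Otherwise $1 - d^p = (1-d)^p \neq 0$ (Frobenius additivity again), and dividing gives $\alpha = \bigl((dc'-c)/(1-d)\bigr)^p \in K^p$, contradicting $\alpha \notin K^p$. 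Since $c \mapsto \alpha + c^p$ is injective on the infinite set $R$, the family yields infinitely many $R$-elements in pairwise distinct cosets.

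The main subtlety lies in choosing the right family: $\{\alpha + c : c \in R\}$ would only give cosets parametrized by $R/(R\cap K^p)$, which a priori might be finite, and $\{\alpha^n : n\geq 1\}$ only produces $p$ distinct cosets. Using $c^p$ instead of $c$ is precisely what activates the key identity $1 - d^p = (1-d)^p$ and forces $\alpha$ into $K^p$ in the nontrivial case. If the intended reading were instead additive (distinct cosets in $K/K^p$ as an additive group), the above family fails, since all $\alpha + c^p$ lie in the same additive coset; one would then need to separate cosets using prime valuations in $R$, exploiting the UFD identity $R \cap K^p = R^p$.
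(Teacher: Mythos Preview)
The lemma is meant additively: ``different modulo $K^p$'' refers to distinct cosets in the additive quotient $K/K^p$, not in $K^\ast/(K^\ast)^p$. This is clear both from the paper's own proof (which argues that $K/K^p$ is a nonzero vector space over the infinite field $K^p$, hence infinite) and from the sole application of the lemma, where one needs an element of $R$ lying outside a finite union of \emph{additive} cosets $a_i + K^p$.

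Your argument correctly produces infinitely many elements of $R$ in distinct multiplicative cosets, but, as you yourself observe, the family $\{\alpha + c^p : c \in R\}$ lies entirely in the single additive coset $\alpha + K^p$. So the proof does not establish the statement that is actually needed. Your suggested alternative for the additive reading invokes ``the UFD identity $R\cap K^p = R^p$'', but the lemma assumes only that $R$ is a Hilbertian ring (an integral domain with a specialization property), not that $R$ is a UFD or even integrally closed; that route is therefore not available here.

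The paper's approach is short and uses nothing about $R$ beyond its having fraction field $K$: once $K/K^p$ is known to be infinite, pick $k_1,\ldots,k_{h+1}\in K$ in distinct additive cosets and clear denominators by some $\delta\in R$. The one subtlety is that multiplication by an arbitrary $\delta$ need not preserve distinctness modulo $K^p$; using $\delta^p$ instead does, since $\delta^p(k_i-k_j)=c^p$ forces $k_i-k_j=(c/\delta)^p\in K^p$. The elements $\delta^p k_1,\ldots,\delta^p k_{h+1}\in R$ then do the job.
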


\begin{proof}
Let $R$ be a Hilbertian ring. Clearly 
$K$ is Hilbertian, in particular it 
is infinite. Assume further that $K$ is of characteristic $p>0$ and imperfect. Then $K\not= K^p$ and $K/K^p$ is a nonzero vector space over the infinite field $K^p$. 
Thus $K/K^p$ is infinite. It follows 
that if $h\in \Nn$ is an integer, one can find $h+1$ elements $k_1,\ldots,k_{h+1}$ of $K$ that are different
modulo $K^p$. If $\delta \in R$ is a common denominator of $k_1,\ldots,k_{h+1}$, then $\delta k_1,\ldots,\delta k_{h+1}$ are elements
of $R$ that are distinct modulo $K^p$. The conclusion follows.
\end{proof}

\subsection{Proof of Proposition \ref{prop:I-HilbertUFD}} \label{ssec:prop-Hilbertian-UFD} 

Fix an integral domain $R$ satisfying the imperfectness assumption and assume that condition (ii) holds.
Let $\lambda_0^\ast \in R\setminus\{0\}$, $\underline a=(a_1,\ldots,a_r)\in R^r$ and ${\mathcal H}\subset K^r$ be a Hilbert subset.

\subsubsection{First reductions} \label{ssec:first-reductions}
Consider the Hilbert subset ${\mathcal H}_{\lambda_0^\ast, a_1}$ deduced from ${\mathcal H}$ by substituting $\lambda_0^\ast \lambda_1+ a_1$ to $\lambda_1$ in the polynomials involved in ${\mathcal H}$. This first reduction is used at the end of the proof in \S \ref{ssec:end-prop-Hilbertian-UFD}. 

From the standard reduction Lemma 12.1.1  from \cite{FrJa}, the Hilbert
subset ${\mathcal H}_{\lambda_0^\ast, a_1}$ contains a Hilbert subset  of the form
\vskip 1mm

\centerline{$\displaystyle H_{K}(f_1,\ldots,f_m;g)=\left\{ \underline \lambda^\ast \in K^r \hskip 1mm \left | 
\begin{matrix}
f_i(\underline \lambda^\ast, \underline x)\hskip 1mm  \hbox{\rm irreducible in } K[\underline x] \cr
\hskip 1mm  \hbox{for each}\ i=1,\ldots,m, \hfill \cr
g(\underline \lambda^\ast) \not=0 \cr
\end{matrix} \right.
\right\}$}

\vskip 0,5mm

\hskip 7mm with $f_1,\ldots,f_m$ irreducible polynomials in $K[\underline \lambda,\underline x]$, 

\hskip 7mm of degree at least $1$ in $\underline x$ and $g\in K[\underline \lambda]$, $g\not=0$.
\vskip 1,5mm

For $i=1,\ldots,m$, view $f_i$ as a polynomial in $\underline y=(\lambda_2,\ldots,\lambda_r,x_1,\ldots,x_n)$ with coefficients in $K[\lambda_1]$. From Proposition \ref{prop:kronecker},  there is a finite set ${\mathcal S}(f_i)$ of irreducible polynomials $g\in K[\lambda_1][y]$ of degree $\geq 1$ in $y$ and a nonzero polynomial $\varphi _i\in K[\lambda_1]$ such that the Hilbert subset $H_{K}(f_i)\subset K$ contains the Hilbert subset $H_{K}({\mathcal S}(f_i); \varphi_i)\subset K$.

Consider the Hilbert subset
\vskip 1mm

\centerline{${H}_K({\mathcal S}(f_1)\cup\cdots \cup {\mathcal S}(f_m); \varphi_1 \cdots \varphi_m) \subset K$.} 
\vskip 1mm

\noindent
From the standard 
reduction Lemma 12.1.4 from \cite{FrJa}, this Hilbert
subset  contains a Hilbert subset  of the form
\vskip 1mm

\centerline{$\displaystyle H_{K}(g_1,\ldots,g_\nu)=\left\{\lambda_1^\ast \in K \hskip 1mm \left | 
\begin{matrix}
g_i(\lambda_1^\ast, y)\hskip 1mm  \hbox{\rm irreducible in } K[y] \cr
\hskip 1mm  \hbox{\rm for each} \ i=1,\ldots,\nu, \hfill\cr 
\end{matrix} \right.
\right\}$}

\vskip 0,5mm

\hskip 7mm with $g_1,\ldots,g_\nu$ irreducible polynomials in $K[\lambda_1,y]$, 

\hskip 7mm monic and of degree at least $2$ in $y$.
\vskip 1mm

\subsubsection{{\bf 1st case:} $g_1,\ldots,g_\nu$ are separable in $y$} From
 assumption (ii), 
there is an element $\lambda_1^\ast \in R\setminus \{-a_1/\lambda_0^\ast\}$
such that, for each $i=1,\ldots,\nu$,  
$g_i(\lambda_1^\ast,y)$ is irreducible in $K[y]$ and $\deg_{\underline x}(f_i(\lambda_1^\ast, \lambda_2,\ldots,\lambda_r,\underline x)) \geq 1$.
We refer to \S \ref{ssec:end-prop-Hilbertian-UFD} for the end of the proof which is common to 1st and 2nd cases.

\subsubsection{ {\bf 2nd case:} \label{ssec:2nd-case}
 $g_1,\ldots,g_\nu$ are not all separable in $y$.} Necessarily $K$ is of characteristic $p>0$.
The following lemma (which we will use a second time) adjusts arguments from \cite[Prop. 12.4.3]{FrJa}. 
For simplicity, set $\lambda = \lambda_1$.

\begin{lemma} \label{lem:ikeda_argument}
Under the 2nd case assumption, for every nonzero $\lambda_0^\ast \in R$, there is a nonzero $b\in \lambda_0^\ast R$ with this property:
there exist irreducible polynomials $\widetilde Q_1,\ldots,\widetilde Q_\nu$ in $K[\lambda,y]$, separable, monic of degree  $\geq 1$ in $y$ such that for all but finitely many $\tau \in H_K(\widetilde Q_1,\ldots,\widetilde Q_\nu)$,  $\tau^p+b$ is in $H_{K}(g_1,\ldots,g_\nu)$.
\end{lemma}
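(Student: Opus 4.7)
The plan is to adapt the Ikeda--Uchida argument from the proof of \cite[Prop.~12.4.3]{FrJa}, refining the choice of the shifting element $b$ so that $b \in \lambda_0^\ast R$. The 2nd case assumption forces $\charac(K) = p > 0$ and, combined with the imperfectness hypothesis inherited from the standing hypothesis of Proposition \ref{prop:I-HilbertUFD}, makes Lemma \ref{lem:imperfect} available.

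First I would extract the separable parts of each $g_i$ in $y$: write $g_i(\lambda, y) = h_i(\lambda, y^{p^{e_i}})$ with $h_i \in K[\lambda, z]$ monic in $z$, irreducible in $K[\lambda,z]$, and \emph{separable} in $z$; at least one $e_i$ is positive under the case hypothesis. Let $L_i = K(\lambda)[z]/(h_i(\lambda, z))$, a finite separable extension of $K(\lambda)$. Next, I would choose $b \in \lambda_0^\ast R \setminus \{0\}$ avoiding, for every $i$, the unique class modulo $K^p$ for which $\lambda - b$ happens to lie in $L_i^p$; uniqueness of the bad class follows from $L_i^p \cap K = K^p$ (if $\lambda - b$ and $\lambda - b'$ both belonged to $L_i^p$, then $b' - b \in L_i^p \cap K = K^p$). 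Lemma \ref{lem:imperfect} yields infinitely many elements of $\lambda_0^\ast R$ distinct modulo $K^p$, so finitely many bad classes can be avoided simultaneously.

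With $b$ fixed, I would define $\widetilde Q_i(\lambda, y) := h_i(\lambda^p + b, y) \in K[\lambda, y]$. These are monic of degree $\geq 1$ in $y$ and separable in $y$ (inherited from $h_i$). Irreducibility of $\widetilde Q_i$ in $K[\lambda, y]$ is equivalent to the linear disjointness over $K(\lambda)$ of the purely inseparable extension $K(\tau)/K(\lambda)$ (with $\tau^p = \lambda - b$) from $L_i$, which is exactly the property arranged in the previous step: $T^p - (\lambda - b)$ stays irreducible over $L_i$, so the compositum has degree $p \cdot \deg_z(h_i)$ over $K(\lambda)$, matching the degree of $\widetilde Q_i$.

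The final and most delicate step verifies the specialization property, and is the main obstacle. For $\tau \in H_K(\widetilde Q_1, \ldots, \widetilde Q_\nu)$, each $h_i(\tau^p + b, y)$ is irreducible in $K[y]$; when $e_i = 0$ one has $g_i = h_i$ and the conclusion is immediate. For $e_i \geq 1$, let $\beta \in \overline K$ be a root of $h_i(\tau^p + b, y)$. By the Capelli criterion for purely inseparable radicals, irreducibility of $g_i(\tau^p + b, y) = h_i(\tau^p + b, y^{p^{e_i}})$ in $K[y]$ is equivalent to $\beta \notin K(\beta)^p$. The generic analog $\beta \notin K(\tau, \beta)^p$ is ensured by the choice of $b$. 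The hard point is to show that the exceptional set of $\tau \in K$ for which this specialized non-$p$-th-power condition fails is \emph{finite}, not merely Hilbert-thin: one translates $\beta(\tau) \in K(\beta(\tau))^p$ into the vanishing of an explicit nonzero polynomial in $\tau$ over $K$, produced by comparing the generic and specialized identities, and concludes that its zero set in $K$ is finite.
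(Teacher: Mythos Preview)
Your overall architecture matches the paper's (and \cite[Prop.~12.4.3]{FrJa}): write $g_i(\lambda,y)=h_i(\lambda,y^{p^{e_i}})$ with $h_i$ separable in~$y$, set $\widetilde Q_i(\lambda,y)=h_i(\lambda^p+b,y)$, and pass from irreducibility of $\widetilde Q_i(\tau,y)$ to that of $g_i(\tau^p+b,y)$. The irreducibility of $\widetilde Q_i$ over $K(\lambda)$ is indeed automatic, exactly by the separable/purely inseparable linear disjointness you invoke.

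The gap is in your criterion for choosing $b$. You ask that $\lambda-b\notin L_i^p$, but this holds for \emph{every} $b\in K$: since $L_i/K(\lambda)$ is separable one has $L_i^p\cap K(\lambda)=K(\lambda)^p=K^p(\lambda^p)$, and $\lambda-b$ is linear in $\lambda$, hence never in $K^p(\lambda^p)$. So your step~3 imposes no constraint, and the sentence ``the generic analog $\beta\notin K(\tau,\beta)^p$ is ensured by the choice of $b$'' is unjustified. It can genuinely fail: with $p$ odd, $h_i(\lambda,z)=z^2-\lambda$ (so $g_i=y^{2p}-\lambda$, irreducible) and $b=0$, the root $\beta$ of $z^2-\tau^p$ satisfies $\beta=\tau^{p/2}\in K^p(\tau^{p/2})=K(\tau,\beta)^p$. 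Then every $\tau$ lies in the exceptional set and your finiteness argument collapses: the ``explicit nonzero polynomial in $\tau$'' you appeal to is identically zero.

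What the paper does instead is isolate, for each inseparable $g_i$, one $y$-coefficient $c(\lambda)$ of $h_i$ that is not a $p$-th power in $K[\lambda]$ (this exists by irreducibility of $g_i$), and then choose $b$ so that $c(\lambda+b)\notin K^p[\lambda]$; by \cite[Lemma 12.4.2(b)]{FrJa} the bad $b$'s form at most one class modulo $K^p$, which Lemma~\ref{lem:imperfect} lets one avoid inside $\lambda_0^\ast R$. That coefficient condition is equivalent to your generic $\beta\notin K(\tau,\beta)^p$ (via the standard fact that for a monic separable irreducible $f\in F[y]$ with root $\beta$, one has $\beta\in F(\beta)^p\Leftrightarrow f\in F^p[y]$, applied with $F=K(\tau)$), and it is precisely what makes \cite[Lemma 12.4.2(c)]{FrJa} deliver \emph{finiteness} of the exceptional $\tau$'s rather than mere thinness. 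In the example above the correct criterion forces $b\notin K^p$, which your vacuous condition does not.
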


\begin{proof}[Proof of Lemma \ref{lem:ikeda_argument}]
Assume $g_1,\ldots,g_\ell$ are not separable in $y$ (with $\ell\geq 1$) and $g_{\ell+1},\ldots,g_\nu$ are separable in $y$. 
For each $i=1,\ldots,\ell$, there exists $Q_i\in K[\lambda,y]$ irreducible, separable, monic and of degree 
$\geq 1$ in $y$ and $q_i$ a power of $p$ different from $1$ such that $g_i(\lambda,y)=Q_i(\lambda,y^{q_i})$. Since $g_i(\lambda,y)$ is irreducible in $K[\lambda,y]$, $Q_i$ has a coefficient $h_i\in K[\lambda]$ which is not a $p$th power. Choose $a_i\in R$ 
with $h_i(\lambda+a_i) \in K^p[\lambda]$ if there exists any, otherwise let $a_i=0$. Also set $Q_i=g_i$ for $i=\ell+1,\ldots,\nu$. 

Consider the elements $a\in R$ from Lemma \ref{lem:imperfect}. Among the corresponding elements 
$a\lambda_0^\ast\in R$, which are also different modulo $K^p$, there is at least one, say $b=a\lambda_0^\ast$, 
such that
$b\in R \setminus \bigcup_{i=1}^\ell (a_i+ K^p)$.
By \cite[Lemma 12.4.2(b)]{FrJa}, $h_i(\lambda+b)\notin K^p[\lambda]$,
$i=1,\ldots,\ell$. 

Consider the polynomials $\widetilde Q_i(\lambda,y)=Q_i(\lambda^p+b,y)$, $i=1,\ldots,\nu$. They are monic and separable in $y$. Furthermore, as detailed in \S 12.4 from \cite{FrJa} (and \cite{FrJanew} which clarifies the argument), they are irreducible in $K[\lambda,y]$. 

Let $\tau \in H_K(\widetilde Q_1,\ldots,\widetilde Q_\nu)$ but not in the set $C$, finite by \cite[Lemma 12.4.2(c)]{FrJa}, of all elements $c\in R$ with $h_i(c^p+b)\in K^p$ for some $i=1,\ldots, \ell$. For $i=\ell+1,\ldots,\nu$, we have $\widetilde Q_i(\tau,y)=g_i(\tau^p+b, y)$ 
and so $g_i(\tau^p+b, y)$ is irreducible in $K[y]$. Let $i\in \{1,\ldots, \ell\}$. Since $\tau \notin C$, we have $h_i(\tau^p+b)\notin K^p$. Hence $Q_i(\tau^p+b, y)=\widetilde Q_i(\tau, y) \notin K^p[y]$. From the choice of $\tau$, this polynomial is irreducible in $K[y]$.
By \cite[Lemma 12.4.1]{FrJa},  we obtain that
\vskip 1mm

\centerline{$\widetilde Q_i(\tau,y^{q_i})= Q_i(\tau^p+b,y^{q_i}) = g_i(\tau^p+b,y)$}
\vskip 1mm

\noindent
is irreducible in $K[y]$. Whence finally: $\tau^p+b\in H_K(g_1,\ldots,g_\nu)$. \end{proof}

Use then the assumption (ii) of Proposition \ref{prop:I-HilbertUFD} 
to conclude that
for the element $b$ and the polynomials $\widetilde Q_1,\ldots,\widetilde Q_\nu$ given by Lemma \ref{lem:ikeda_argument},
the Hilbert subset $H_K(\widetilde Q_1,\ldots,\widetilde Q_\nu)$ contains infinitely many elements $\tau \in R$.
Fix one off the finite list of exceptions in the final sentence of Lemma \ref{lem:ikeda_argument} and such that  $\lambda_1^\ast = \tau^p+b$ is different from $-a_1/\lambda_0^\ast$. The element $\lambda_1^\ast\in R$ is then in $H_K(g_1,\ldots,g_\nu)$ and $\lambda_0^\ast \lambda_1^\ast + a_1\not=0$. Up to excluding finitely many more $\tau$ above, we may also assure that $\deg_{\underline x}(f_i(\lambda_1^\ast, \lambda_2,\ldots,\lambda_r,\underline x)) \geq 1$ ($i=1,\ldots,\nu$).
(We have only used here that $b\in R$. The possible choice of $b$ in $\lambda_0^\ast R$ will be used later (\S \ref{ssec:proof1})).

\subsubsection{End of proof of Proposition \ref{prop:I-HilbertUFD}} \label{ssec:end-prop-Hilbertian-UFD} Applying Prop.\ref{prop:kronecker} and taking into account the first reduction changing ${\mathcal H}$ to ${\mathcal H}_{\lambda_0^\ast, a_1}$ yields in both cases that
\vskip 1,5mm

\noindent
(2) there is $\lambda_1^\ast\in R\setminus\{0\}$ such that $\lambda_1^\ast \equiv a_1\hskip 1mm [{\rm mod}\hskip 1mm  \lambda_0^\ast]$, $f_i(\lambda_1^\ast,\lambda_2,\ldots,\lambda_r, \underline x)$ is irreducible in $K[\lambda_2,\ldots,\lambda_r,\underline x]$ and is of degree at least $1$ in $\underline x$, $i=1,\ldots,m$.
\vskip 1,5mm

\noindent
Repeating this argument provides a $r$-tuple $\underline \lambda^\ast= (\lambda_1^\ast,\ldots,\lambda_r^\ast)$ in $(R\setminus \{0\})^r$ such that $f_1(\underline \lambda^\ast,\underline x), \ldots, f_m(\underline \lambda^\ast,\underline x)$ are irreducible in $K[\underline x]$ (so $\underline \lambda^\ast$ is in the original Hilbert subset ${\mathcal H}$) and such that $\lambda_i^\ast \equiv a_i\hskip 1mm [{\rm mod}\hskip 1mm  \lambda_0^\ast\cdots \lambda_{i-1}^\ast]$, $i=1,\ldots,r$.

\subsection{UFD with fraction field with the product formula} \label{sssec:Hilbert} 

\begin{theorem} \label{lemma:hilbert1} If $R$ is an integral domain such that the fraction field $K$ has the product formula and is imperfect if of characteristic $p>0$, then $R$ is a Hilbertian ring.
\end{theorem}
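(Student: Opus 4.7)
The plan is to exploit Proposition \ref{prop:I-HilbertUFD} to reduce Theorem \ref{lemma:hilbert1} to its one-variable separable case. Under the imperfectness hypothesis, condition (ii) of that proposition is equivalent to (i); it therefore suffices to show that every separable Hilbert subset $H = H_K(f_1,\ldots,f_m;g) \subset K$ (with $f_i\in K[\lambda,x]$ irreducible and separable in $x$, of degree $\geq 1$ in $x$, and $g\in K[\lambda]\setminus\{0\}$) meets $R$. So the problem becomes: find $\lambda^* \in R$ such that $f_i(\lambda^*,x)$ is irreducible in $K[x]$ for $i=1,\ldots,m$ and $g(\lambda^*)\neq 0$.

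The heart of the argument is Weissauer's theorem \cite[Theorem 15.3.3]{FrJa}, which, under the product-formula assumption, shows $K$ to be Hilbertian, so $H$ is already nonempty (and even Zariski dense) in $K$. The proof of Weissauer's theorem moreover provides a quantitative statement: using the product formula one associates a multiplicative height to $K^\times$, and the complement $K\setminus H$ is \emph{thin} in the sense that its elements of height $\leq N$ grow much more slowly than the total number of $K$-elements of height $\leq N$. Concretely, bad $\lambda^*$ arise as $\lambda$-coordinates of $K$-rational points on the covering curves $C_i:f_i(\lambda,x)=0$, and Puiseux expansions at a distinguished place ${\mathfrak p}_0\in S$ furnish an upper bound on their heights strictly below the generic rate.

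To upgrade the conclusion to $\lambda^* \in R$, one needs enough elements of $R$ at each height scale. Since $K=\mathrm{Frac}(R)$, given any element of $K$ one can clear denominators by multiplying by a nonzero element of $R$, producing an element of $R$ whose ${\mathfrak p}_0$-adic absolute value is as large as desired while remaining controlled at other places of $S$. Constructing in this way a sequence $\lambda_n^* \in R$ with $|\lambda_n^*|_{{\mathfrak p}_0}\to\infty$ and comparing with Weissauer's height bound on the bad set, one concludes that all but finitely many $\lambda_n^*$ lie in $H$, yielding the required element of $R\cap H$.

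The main obstacle is that, unlike in \cite[Prop.~13.4.1]{FrJa} where $R$ is finitely generated over $\Zz$ or $k[u]$ and a Northcott-type counting function is directly available, here $R$ is arbitrary; the only quantitative link between $R$ and $K$ is the product formula. Making Weissauer's density/thin-set argument compatible with the integrality condition $\lambda^*\in R$ requires a careful place-by-place analysis along $S$ and a judicious choice of the distinguished place ${\mathfrak p}_0$ and of the scaling factors in $R$, so as to guarantee that enough large-height elements of $R$ fall into $H$.
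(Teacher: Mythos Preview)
Your reduction to condition (ii) of Proposition \ref{prop:I-HilbertUFD} is exactly right, and the paper proceeds the same way. The gap is in the second half of your argument. You write that, having a sequence $\lambda_n^\ast\in R$ with $|\lambda_n^\ast|_{{\frak p}_0}\to\infty$, ``comparing with Weissauer's height bound on the bad set'' shows that all but finitely many $\lambda_n^\ast$ lie in $H$. This inference does not follow. A thin-set bound of the shape
\[
\#\{\lambda\in K:\ h(\lambda)\leq N,\ \lambda\notin H\}\ =\ o\bigl(\#\{\lambda\in K:\ h(\lambda)\leq N\}\bigr)
\]
says something about a \emph{count over all of $K$}; it says nothing about any particular sequence you decide to manufacture in $R$. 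A thin set can perfectly well contain an infinite sequence of elements of unbounded height (indeed, any infinite cover $f_i(\lambda,x)=0$ has infinitely many $K$-rational $\lambda$-coordinates when $K$ is Hilbertian of characteristic $0$). So ``$|\lambda_n^\ast|_{{\frak p}_0}\to\infty$'' is no obstruction to every $\lambda_n^\ast$ being bad. You yourself flag this in your last paragraph, but you do not actually close the gap; ``a careful place-by-place analysis'' is not an argument.

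The paper does not try to push the thin-set counting through. It instead invokes a specific arithmetic input, Lemma \ref{thm:indian} (an adaptation of \cite[Theorem 3.3]{DeIndian}): for all but finitely many $t_0\in R$ there exists $a\in R$ such that, for any $b\in R$ of positive height, infinitely many elements of the \emph{explicit} one-parameter family $t_0+ab^\ell$ ($\ell>0$) lie in $H_K(f_1,\ldots,f_m)$. The point is that this family has enough algebraic structure (it is a ``polynomial progression'' in $b^\ell$) that the usual ramification/Puiseux analysis rules out bad specializations for all large $\ell$ directly, not via a density comparison. The imperfectness assumption enters through Lemma \ref{lem:imperfect}, which is needed to guarantee that $a$ can be taken in $R$ rather than merely in $K$. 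Once Lemma \ref{thm:indian} is in hand, the proof is short: pick $t_0$, pick $a$, use the product formula to exhibit some $b\in R$ with $h(b)>0$, and one is done.

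If you want to salvage your approach, you would need to replace the naive height-growth sequence by a structured family for which an \emph{individual} avoidance statement can be proved---which is precisely what Lemma \ref{thm:indian} supplies.
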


Fix a ring $R$ as in the statement.
Theorem \ref{lemma:hilbert1} relies on the following lemma, whose main ingredient is a result  for fields with the product formula. Recall a useful tool in a field $K$ with a set $S$ of primes $\frak p$ satisfying  the product formula. For every $a\in K$, the ({\it logarithmic}) {\it height} $h(a)$ of $a$ is defined by:
\vskip 1mm

\centerline{$h(a) = \sum_{{\frak p} \in S} \log(\max(1,|a|_{\frak p}))$.}

\vskip 1mm
\noindent
Clearly $h(a^n) = n h(a)$ ($n\in \Nn$) and 
$h(1/a) = h(a)$ if $a\not=0$.

\begin{lemma} \label{thm:indian}
Let $f_1,\ldots,f_m$ be $m$ irreducible polynomials in $K(\lambda)[y]$. For all but finitely many $t_0\in R$, there is a nonzero element $a\in R$ with 
the following property: if $b\in R$ is of height $h(b)>0$, the
Hilbert subset $H_{K}(f_1,\ldots,f_{m})$ contains infinitely many elements of $R$ of 
the form $t_0+ab^\ell$ {\rm (}$\ell>0${\rm )}.
\end{lemma}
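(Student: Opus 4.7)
My plan is to adapt the Weissauer-style proof that a field $K$ with the product formula is Hilbertian (\cite[Theorem 15.3.3]{FrJa}) to the geometric progression $\{t_0+ab^\ell\}_{\ell\geq 1}$ in $R$. The overall scheme is: reduce to a structural description of the ``bad set'' via Galois theory, then use the height machinery to show that this progression meets the bad set at only finitely many indices $\ell$.

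First, via the Kronecker substitution of Proposition \ref{prop:kronecker} followed by the standard reduction \cite[Lemma 12.1.4]{FrJa}, I would replace the $f_i$ by finitely many polynomials $g_1,\ldots,g_\nu\in K[\lambda,y]$ that are irreducible, separable in $y$, and monic of degree $\geq 1$ in $y$, together with a nonzero polynomial $\varphi\in K[\lambda]$ controlling the denominators. It then suffices to find infinitely many $\ell\geq 1$ with $\varphi(t_0+ab^\ell)\neq 0$ and each $g_j(t_0+ab^\ell,y)$ irreducible in $K[y]$. The geometric input I would use next is the standard Hilbert irreducibility descent applied to the splitting field of $g_j(\lambda,y)=0$ over $K(\lambda)$: for each $g_j$, the bad set $B_j=\{t^\ast\in K : g_j(t^\ast,y)\text{ reducible in } K[y]\}$ is contained, up to a finite exceptional set, in a finite union $\bigcup_k \phi_{j,k}(K)$ where each $\phi_{j,k}\in K(y)$ has degree $\geq 2$. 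The finite exceptional set is precisely what is excluded in the choice of $t_0$; one then takes $a\in R\setminus\{0\}$ (for instance $a=1$).

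It remains to show that for each branch $\phi=\phi_{j,k}$, only finitely many $\ell\geq 1$ admit $y_\ell\in K$ with $t_0+ab^\ell=\phi(y_\ell)$. Writing this out yields a $K$-point $(y_\ell,b^\ell)$ of the affine curve $C_\phi=\{\phi(y)-az=t_0\}\subset \Aa^2_K$. Basic height inequalities force $h(y_\ell)\geq(\ell\,h(b)-O(1))/\deg(\phi)$, so both coordinate heights tend to infinity linearly in $\ell$. Since $\deg(\phi)\geq 2$, the projection $C_\phi\to\Aa^1_K$ via $z$ is not birational; the Siegel/$S$-unit finiteness available from the product formula (\cite[Ch.~15]{FrJa}) then implies that the set of $K$-points on $C_\phi$ whose second coordinate lies in $\{b^\ell:\ell\geq 1\}$ is finite. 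Summing these finite contributions over the finitely many branches $(j,k)$ and over the finitely many zeros of $\ell\mapsto\varphi(t_0+ab^\ell)$ isolates only finitely many bad $\ell$, leaving infinitely many as required.

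The principal obstacle is making precise and invoking the Siegel/$S$-unit-type finiteness in the abstract product-formula setting: for $K=\Qq$ or $K=k(u_1,\ldots,u_r)$ it is a classical consequence of Siegel's theorem (respectively, its function-field analog), but one must verify that the height machinery of \cite[Chapter 15]{FrJa} is strong enough to guarantee finiteness of $b$-power solutions to $\phi(y)=t_0+az$ for each of the branch functions $\phi$ in the generality needed here. Assuming this finiteness ingredient, the plan above produces the required infinitely many $\ell$.
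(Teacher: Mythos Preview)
The paper does not prove this lemma from scratch: it invokes \cite[Theorem~3.3]{DeIndian} (where $a$ is only claimed to lie in $K$) and notes that the same argument yields $a\in R$ once Lemma~\ref{lem:imperfect} supplies, in characteristic $p$, infinitely many elements of $R$ in distinct $K^p$-cosets. Your proposal is therefore an attempted reconstruction of that cited result; the opening reduction to value sets $\phi_{j,k}(K)$ with $\deg\phi_{j,k}\ge 2$ is indeed the standard first move there.

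The core step, however, does not go through as you describe. The curve $C_\phi:\phi(y)-az=t_0$ is rational; when $\phi$ is a polynomial it has a single place at infinity, so neither Siegel's theorem nor any $S$-unit finiteness applies (and you would in any case have no control over the $y$-coordinate being $S$-integral). Nor does \cite[Ch.~15]{FrJa} contain a Diophantine finiteness theorem of this kind for general product-formula fields. The argument in \cite{DeIndian} is of a different nature: the finitely many $t_0$ to be excluded are the branch points of the $\phi_{j,k}$; once $t_0$ is unramified, the local inverses $\phi_{j,k}^{-1}$ near $t_0$ are given by algebraic power series in $\lambda-t_0$, and $a$ is chosen (via an Eisenstein-type estimate on those coefficients) so that, at the relevant places $v$, the value $ab^\ell$ lies inside the $v$-adic disk of convergence for all large $\ell$. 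Any $K$-rational preimage $y_\ell$ would then have to agree $v$-adically with a branch defined only over a nontrivial extension of $K$, a contradiction. In particular $a$ is not arbitrary --- taking ``for instance $a=1$'' as you suggest discards precisely the mechanism that makes the proof work, and upgrading this carefully chosen $a$ from $K$ to $R$ is exactly what the paper's one-line modification via Lemma~\ref{lem:imperfect} addresses.
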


\begin{proof}
\cite[Theorem 3.3]{DeIndian} proves the weaker 
version for which the element $a$ is only 
asserted to lie in $K$. 
However the proof can be adjusted so that $a\in R$. 
Specifically, the same argument there
leads to the stronger conclusion provided that, if $K$ is of characteristic $p>0$, infinitely 
many $a\in R$ can be found that are different 
modulo $K^p$.  This is the conclusion of Lemma \ref{lem:imperfect}.
\end{proof}

\begin{proof}[Proof of Theorem \ref{lemma:hilbert1}]
We prove condition (ii) from Proposition \ref{prop:I-HilbertUFD}. Let ${\mathcal H}\subset K$ be a separable Hilbert subset.
From Lemmas 12.1.1 and 12.1.4 of \cite{FrJa}, the Hilbert
subset ${\mathcal H}$ contains a separable Hilbert subset  of the form
\vskip 1mm

\centerline{$\displaystyle H_{K}(f_1,\ldots,f_m)=\left\{ \lambda^\ast \in K \hskip 1mm \left | 
\begin{matrix}
f_i(\lambda^\ast, y)\hskip 1mm  \hbox{\rm irreducible in } K[y] \cr
\hskip 1mm  \hbox{\rm for each} \ i=1,\ldots,m, \hfill\cr
\end{matrix} \right.
\right\}$}

\vskip 0,5mm

\hskip 7mm with $f_1,\ldots,f_m$ irreducible polynomials in $K[\lambda,y]$, 

\hskip 7mm monic, separable and of degree at least $2$ in $y$.
\vskip 1,5mm

Pick an element $t_0\in R$
that avoids the finite set of exceptions in Lemma \ref{thm:indian}.
Consider an element $a\in R$ associated to this $t_0$ in Lemma \ref{thm:indian}. Choose an element 
$b\in R$ of height $h(b)>0$. 

Here is an argument showing that such $b$ exist. 
Fix a prime ${\frak p}\in S$. Recall that by definition, the corresponding absolute value is nontrivial  \cite[\S 13.3]{FrJa}: there exists $b \in K$ such that $|b|_{\frak p} \not=1$. One may assume that $b \in R$. From the product formula, there is a prime ${\frak p}_0\in S$ such that $|b|_{\frak p_0} > 1$. 
We have $h(b)\geq  \log (\max(1,|b|_{\frak p_0})) >0$.

From Lemma \ref{thm:indian}, $\lambda_1^\ast=t_0+ab^\ell\in R$ is in the Hilbert subset $H_{K}(f_1,\ldots,f_{m})$, hence in 
the Hilbert subset ${\mathcal H}$,
for infinitely many 
integers $\ell>0$. 
\end{proof}

\subsection{Polynomial rings in one variable} \label{sssec:Hilbert2}

\begin{theorem}  \label{lemma:hilbert2}
Assume that $R=k[u]$ with $k$ an arbitrary field. Let ${\mathcal H}$ be a Hilbert subset of  $K^r$ {\rm (}$r\geq 1${\rm )},
$\lambda_0^\ast \in R$ a nonzero element of $R$ and $d_1\geq 1$ an integer. Define $\widetilde p$ by
\vskip 1mm

\centerline{$\widetilde p = \left\{\begin{matrix}
1\hskip 5mm  \hbox{if ${\rm char}(k) = 0$ or ${\mathcal H}$ is a separable Hilbert subset} \hfill \\
p\hskip 5mm  \hbox{otherwise}. \hfill \\
\end{matrix}
\right.$}
\vskip 1mm

\noindent
Denote the subset of ${\mathcal H}$ of 
$r$-tuples $\underline \lambda^\ast= (\lambda_1^\ast,\ldots,\lambda_r^\ast)\in R^r$ such that 
$\lambda_1^\ast$ and $\lambda_0^\ast \lambda_2^\ast \cdots \lambda_r^\ast$ are relatively prime in $R$
and  $\max_{1\leq i\leq r} \deg(\lambda_i^\ast) =  \widetilde  p d_1$ by
${\mathcal H}_{\lambda_0^\ast, \widetilde p d_1}$. 
There is an integer $d_0$ such that if $d_1\geq d_0$, the set ${\mathcal H}_{\lambda_0^\ast, \widetilde p d_1}$ 
is nonempty.
\end{theorem}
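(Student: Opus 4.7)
The plan is to combine the Hilbertian ring property of $R=k[u]$ from Theorem \ref{lemma:hilbert1}, which applies because $K=k(u)$ carries the usual product formula and is imperfect in characteristic $p>0$ (as $u\notin K^p$), with the specialization Lemma \ref{thm:indian} to control the $u$-degree of $\lambda_1^\ast$. The key idea is to first fix the auxiliary coordinates $\lambda_2^\ast,\ldots,\lambda_r^\ast$ of some bounded degree not depending on $d_1$, thereby reducing the task to a single-variable problem in $\lambda_1$, and then to concentrate the prescribed large degree $\widetilde p d_1$ entirely on $\lambda_1^\ast$.

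First I would apply Lemma 12.1.1 of \cite{FrJa} to replace $\mathcal{H}$ by a Hilbert subset of the form $H_K(f_1,\ldots,f_m;g)$ with $f_i$ irreducible in $K[\lambdabar,\xbar]$. Viewing each $f_i$ as a polynomial in $(\lambda_1,\xbar)$ over $K[\lambda_2,\ldots,\lambda_r]$ and running the Kronecker-type reduction used in the proof of Proposition \ref{prop:I-HilbertUFD} produces a Hilbert subset $\mathcal{H}'\subset K^{r-1}$ with the property that for every $(\lambda_2^\ast,\ldots,\lambda_r^\ast)\in\mathcal{H}'$ each specialization $f_i(\lambda_1,\lambda_2^\ast,\ldots,\lambda_r^\ast,\xbar)$ remains irreducible in $K(\lambda_1)[\xbar]$. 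By Theorem \ref{lemma:hilbert1}, $R$ is a Hilbertian ring, so $\mathcal{H}'$ contains a tuple in $R^{r-1}$; fix such a tuple once and for all, set $\Lambda=\lambda_0^\ast\lambda_2^\ast\cdots\lambda_r^\ast\in R$, and let $D_0:=\deg(\Lambda)$, a constant that is independent of $d_1$.

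With $\lambda_2^\ast,\ldots,\lambda_r^\ast$ fixed, the remaining task is to find $\lambda_1^\ast\in R$ in the restricted one-variable Hilbert subset $\mathcal{H}''\subset K$, coprime to $\Lambda$, of degree exactly $\widetilde p d_1$. I would apply the pre-shift $\lambda_1\mapsto\Lambda\lambda_1+1$ in the spirit of Proposition \ref{prop:I-HilbertUFD}(iii), so that any solution automatically satisfies $\lambda_1^\ast\equiv 1\pmod{\Lambda}$ and hence the required coprimality, then further reduce via Kronecker substitution (Proposition \ref{prop:kronecker}) and Lemma 12.1.4 of \cite{FrJa} to a one-variable Hilbert subset $H_K(g_1,\ldots,g_\nu)$ with monic $g_i\in K[\lambda_1,y]$ of degree $\geq 2$ in $y$. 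In Case A (all $g_i$ separable in $y$; $\widetilde p=1$), apply Lemma \ref{thm:indian} with $b=u$, which has strictly positive height at the infinite place of $k(u)$, to produce elements $t_0+au^\ell$ in the shifted subset, and choose $\ell=d_1-D_0-\deg(a)$ so that the preimage $\lambda_1^\ast=\Lambda(t_0+au^\ell)+1$ has degree exactly $d_1$. In Case B (some $g_i$ inseparable, $\charac(k)=p>0$, $\widetilde p=p$), I would first insert the Ikeda trick of Lemma \ref{lem:ikeda_argument} applied with modulus $\Lambda$ to obtain $b\in\Lambda R$ and a separable Hilbert subset in $\tau$ whose elements satisfy $\tau^p+b\in H_K(g_1,\ldots,g_\nu)$, then apply the Case A argument to this separable subset after a further pre-shift $\tau\mapsto\Lambda\tau+1$ to produce $\tau\in R$ of degree $d_1$ with $\tau\equiv 1\pmod{\Lambda}$; then $\lambda_1^\ast=\tau^p+b$ has degree $pd_1=\widetilde p d_1$ and is coprime to $\Lambda$.

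The tuple $(\lambda_1^\ast,\ldots,\lambda_r^\ast)$ lies in $\mathcal{H}\cap R^r$, $\lambda_1^\ast$ is coprime to $\Lambda=\lambda_0^\ast\lambda_2^\ast\cdots\lambda_r^\ast$, and $\deg(\lambda_1^\ast)=\widetilde p d_1$ strictly exceeds $\max_{i\geq 2}\deg(\lambda_i^\ast)\leq D_0$ as soon as $\widetilde p d_1>D_0$, which gives $\max_i\deg(\lambda_i^\ast)=\widetilde p d_1$. Collecting the various lower-bound thresholds on $d_1$ from each step into a single integer $d_0$ concludes the proof. I expect the main obstacle to be verifying that Lemma \ref{thm:indian} can be applied with the \emph{specific} exponent $\ell=d_1-D_0-\deg(a)$ rather than only with infinitely many exponents, which requires a careful inspection of the proof of \cite[Theorem 3.3]{DeIndian} to confirm that the set of admissible $\ell$ is in fact cofinite in $\Nn$, so that for every $d_1$ large enough the prescribed value of $\ell$ is admissible.
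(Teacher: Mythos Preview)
Your overall architecture---fix $\lambda_2^\ast,\ldots,\lambda_r^\ast$ once, then place the entire degree burden on $\lambda_1^\ast$---is sound and is close in spirit to the paper's ``general case'' reduction in \S\ref{ssec:proof2}. The essential divergence, and the source of the gap, is in the one-variable degree-control step.

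You invoke Lemma~\ref{thm:indian} to produce $t_0+au^\ell$ in the shifted Hilbert subset and then solve $\ell=d_1-D_0-\deg(a)$. But Lemma~\ref{thm:indian}, as stated, only guarantees \emph{infinitely many} admissible $\ell$, not a cofinite set; so for a prescribed $d_1$ you cannot be sure that the particular $\ell$ you need is among them. You flag this yourself, but it is not a side issue: it is exactly the point of the theorem, which asserts that \emph{every} $d_1\ge d_0$ works. The paper deliberately avoids Lemma~\ref{thm:indian} here and uses two different devices that do yield all large degrees: for $k$ infinite, Lang's specialization \cite[Prop.~4.1, p.~236]{langFG} produces elements $\tau+\gamma(u-\beta)^{d_1}$ of degree exactly $d_1$ for \emph{every} $d_1\ge1$; for $k$ finite, a Chinese Remainder argument based on \cite[Lemma~13.3.4]{FrJa} produces the entire arithmetic progression $a_0+\mathcal I$ inside the Hilbert set, whose degrees cover all sufficiently large integers. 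Unless you go back into \cite{DeIndian} and extract a cofiniteness statement for $\ell$, your Case~A is incomplete.

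There is also a bookkeeping slip in Case~B. You apply the pre-shift $\lambda_1\mapsto\Lambda\lambda_1+1$ \emph{before} forming $H_K(g_1,\ldots,g_\nu)$, so an element $\tau^p+b$ of that set corresponds to $\lambda_1^\ast=\Lambda(\tau^p+b)+1$, of degree $D_0+p\deg(\tau)$, not $p\deg(\tau)$. Hitting $pd_1$ then forces $p\mid D_0$, which you have no control over. The fix is to drop the first pre-shift in the inseparable branch and rely solely on $b\in\Lambda R$ together with the pre-shift on $\tau$ for coprimality (this is what the paper does in \S\ref{ssec:proof1}, 2nd case): then $\lambda_1^\ast=\tau^p+b\equiv 1\pmod\Lambda$ directly and $\deg(\lambda_1^\ast)=p\deg(\tau)$ as you want.
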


When $R=k[u]$, statement (iii) from Proposition \ref{prop:I-HilbertUFD} also holds for the Hilbert subset ${\mathcal H}$: there the congruence conditions are stronger but no control is given on the degree in $u$ of $\lambda_1^\ast,\ldots,\lambda_r^\ast$ as in Theorem \ref{lemma:hilbert2}.

We divide the proof of Theorem \ref{lemma:hilbert2} into two parts. The situation: one parameter, one variable, is considered in \S \ref{ssec:proof1}, 
the general one  in \S \ref{ssec:proof2}.

\subsubsection{Proof of Theorem \ref{lemma:hilbert2} {\rm -- {situation} $r=n=1$ --}} \label{ssec:proof1}
We are given a Hilbert subset ${\mathcal H}\subset K=k(u)$, a nonzero element $\lambda_0^\ast \in k[u]$, an integer $d_1\geq 1$ and we need to find an element $\lambda_1^\ast \in k[u]$ such that $\lambda_1^\ast \in {\mathcal H}$, $\lambda_1^\ast$ and $\lambda_0^\ast$ are relatively prime and $\deg(\lambda_1^\ast)= \widetilde p d_1$. 

From Lemmas 12.1.1 and 12.1.4 from \cite{FrJa}, the Hilbert
subset ${\mathcal H}$ contains a Hilbert subset  of the form
\vskip 1mm

\centerline{$\displaystyle H_{K}(f_1,\ldots,f_m)=\left\{ \lambda^\ast \in K \hskip 1mm \left | 
\begin{matrix}
f_i(\lambda^\ast, y)\hskip 1mm  \hbox{\rm irreducible in } K[y] \cr
\hskip 1mm  \hbox{\rm for each} \ i=1,\ldots,m. \hfill \cr
\end{matrix} \right.
\right\}$}

\vskip 0,5mm

\hskip 7mm with $f_1,\ldots,f_m$ irreducible polynomials in $K[\lambda,y]$, 

\hskip 7mm monic and of degree at least $2$ in $y$.
\vskip 1,5mm

We distinguish the two cases corresponding to the definition of $\widetilde p$.
\vskip 1,5mm

\noindent
{\textbf{Separable case}: ${\rm char}(k) = 0$ or ${\mathcal H}$ is a separable Hilbert subset}. As $n=1$, the Hilbert subset ${\mathcal H}$ is also separable under the assumption ${\rm char}(k) = 0$. So we may assume that the polynomials $f_1,\ldots,f_m$ above are separable in $y$. We distinguish two sub-cases.
\vskip 1,5mm

- {\it 1st sub-case}: $k$ is infinite. Use \cite[Prop.4.1 p.236]{langFG}
to assert that there exists a nonempty Zariski open subset  $V\subset \Aa_{k}^2$ such that for all but finitely many $\gamma \in k$, 
\vskip 1mm

\centerline{$\{\tau+\gamma (u-\beta)^{d_1} \in k[u]\hskip 2pt | \hskip 2pt (\tau,\beta) \in V\}\subset H_{K}(f_1,\ldots,f_{m})$.}
\vskip 1mm

\noindent
Fix a nonzero $\gamma \in k$ off the finite exceptional list. There are infinitely many different $(\tau,\beta) \in V$ such that 
no root in $\overline k$ of the polynomial $\lambda_0^\ast\in k[u]$ is a root of $\tau+\gamma (u-\beta)^{d_1}$, and so 
$\tau+\gamma (u-\beta)^{d_1}$ and $\lambda_0^\ast$ are relatively prime. The corresponding elements $\lambda_1^\ast=
\tau+\gamma (u-\beta)^{d_1}$ are infinitely many different elements of the set ${\mathcal H}_{\lambda_0^\ast, d_1}$.
In this case, one can take $d_0=1$.

\vskip 1,5mm

- {\it 2nd sub-case}: $k$ is finite.
Start with another classical reduction, namely \cite[Lemma 13.1.2]{FrJa}, to conclude that there exist 
polynomials $Q_1, \ldots, Q_\nu$ in $K[\lambda,y]$, irreducible in $\overline K[\lambda,y]$, monic and separable
in $y$, of degree $\geq 2$ in $y$ and such that the Hilbert subset $H_{K}(f_1,\ldots,f_m)$
contains the set 
\vskip 1mm

\centerline{$\displaystyle H^\prime_{K}(Q_1,\ldots,Q_\nu)=\left\{ \lambda^\ast \in K \hskip 1mm \left |  \hskip 1mm
\begin{matrix}
Q_i(\lambda^\ast, y)\hskip 1mm  \hbox{\rm has no root in } K \cr
\hbox{for each}\ i=1,\ldots,\nu \cr
\end{matrix} \right.
\right\}$}

Consider the set $\{{\frak p}_i \hskip 1mm | \hskip 1mm i\in I\}$ of irreducible factors of the given polynomial $\lambda_0^\ast\in k[u]$;
view them as primes of $K$. Apply \cite[Lemma 13.3.4]{FrJa} to assert that, for each $j=1,\ldots,\nu$, there are infinitely primes ${\frak p}_j$ of $K$ such that there is an $a_{{\frak p}_j}\in R$ with this property: if $a \in R$ satisfies $a\equiv a_{{\frak p}_j}\ {\rm mod}\ {\frak p}_j$, then $Q_j(a,v)\not=0$ for every $v\in K$. For each $j=1,\ldots, \nu$, pick one such prime ${\frak p}_j$ that is different from all primes ${\frak p}_i$ with $i\in I$.

Denote the ideal $(\prod_{j=1}^\nu {\frak p}_j)(\prod_{i\in I} {\frak p}_i) \subset R$ by ${\mathcal I}$. From the Chinese Remainder Theorem, there exists $a_0\in R$ such that every $a\in a_0+ {\mathcal I}$ satisfies

\vskip 1mm

\noindent
\centerline{$\displaystyle \left\{\begin{matrix}
a\equiv a_{{\frak p}_j}\ {\rm mod}\ {\frak p}_j\ {\rm for}\ j=1,\ldots, \nu, \hfill\\
a\equiv 1\ {\rm mod}\ {\frak p}_i\ {\rm for}\ i\in I. \hfill\\
\end{matrix}
\right.
$}

\vskip 1mm

\noindent
Consider such an $a$ and rename it $\lambda_1^\ast$. It follows from the first condition that $\lambda_1^\ast\in H^\prime_{K}(Q_1,\ldots,Q_\nu)$ and so $\lambda_1^\ast\in H_{K}(f_1,\ldots,f_m)\subset {\mathcal H}$. It follows from the second condition that $\lambda_1^\ast \not\equiv 0\ {\rm mod}\ {\frak p}_i$ for every $i\in I$. 
Hence $\lambda_1^\ast$ and $\lambda_0^\ast$ are relatively prime. 
Finally when $\lambda_1^\ast=a$ ranges over $a_0+ {\mathcal I}$, $\deg(\lambda_1^\ast)$ assumes all but finitely many 
values in $\Nn$. Therefore there is an integer $d_0$ such that ${\mathcal H}_{\lambda^\ast_0,d_1}\not= \emptyset$ for every $d_1\geq d_0$.
\vskip 1,5mm

\noindent
{\textbf{2nd case}}: {\it ${\rm char}(k) = p >0 $ and ${\mathcal H}$ is not a separable Hilbert subset}. 
Not all the polynomials $f_1,\ldots,f_m$ are separable in $y$.  
Proceed as in 
\S \ref{ssec:2nd-case}. From Lemma 
\ref{lem:ikeda_argument}, there is a nonzero $b\in \lambda_0^\ast R$ and some irreducible polynomials $\widetilde Q_1,\ldots,\widetilde Q_m$ in $K[\lambda,y]$, separable, monic of degree  $\geq 1$ in $y$ such that for all but finitely many $\tau \in H_K(\widetilde Q_1,\ldots,\widetilde Q_m)$,  $\tau^p+b$ is in $H_{K}(f_1,\ldots,f_m)$.

From the separable case of the current proof, there is an integer $d_0\geq 1$ with the following property: the Hilbert subset $H_K(\widetilde Q_1,\ldots,\widetilde Q_\nu)$ contains infinitely many elements $\tau \in R$ such that $\tau$ and $\lambda_0^\ast$ are relatively prime and $\deg(\tau)=d_1$. Fix one off the finite list of exceptions in the final sentence of Lemma \ref{lem:ikeda_argument} and set  $\lambda_1^\ast = \tau^p+b$. We then have $\lambda_1^\ast \in H_K(f_1,\ldots,f_m)$. 
Furthermore $\lambda_1^\ast$ and $\lambda_0^\ast$ are relatively prime in $R$. Finally assuming that $d_0$ is also 
larger than $\deg(b)$, we have $\deg(\lambda_1^\ast)=pd_1$ if $d_1\geq d_0$, thus finally proving that $\lambda_1^\ast \in {\mathcal H}_{\lambda_0^\ast, p d_1}$.

\subsubsection{Proof of Theorem \ref{lemma:hilbert2} {\rm -- situation $r\geq 1$, $n\geq 1$ --}} \label{ssec:proof2}
As in \S \ref{ssec:proof1} we distinguish two cases according to the definition of $\widetilde p$. 

\vskip 1,5mm

\noindent
{\textbf{Separable case}: ${\mathcal H}$ is a separable Hilbert subset {\rm (in particular $n=1$)}.  From  Lemma 12.1.1 and Lemma 12.1.4 from \cite{FrJa}, the separable Hilbert
subset ${\mathcal H}\subset K^r$ contains a Hilbert subset  of the form
\vskip 1mm

\centerline{$\displaystyle H_{K}(f_1,\ldots,f_m)=\left\{ \underline \lambda^\ast \in K^r \hskip 1mm \left | 
\begin{matrix}
f_i(\underline \lambda^\ast, x)\hskip 1mm  \hbox{\rm irreducible in } K[x] \cr
\hskip 1mm  \hbox{\rm for each} \ i=1,\ldots,m, \hfill \cr
\end{matrix} \right.
\right\}$}

\vskip 0,5mm

\hskip 7mm with $f_1,\ldots,f_m$ irreducible polynomials in $K[\underline \lambda,x]$, 

\hskip 7mm separable, monic and of degree at least $2$ in $x$.
\vskip 1,5mm

Set ${\mathcal K}= K(\lambda_3,\ldots,\lambda_{r})$ (with ${\mathcal K}= K$ if $r=2$) and regard $f_1,\ldots,f_m$ as polynomials in the ring ${\mathcal K}(\lambda_{1})[\lambda_2,x]$. By \cite[Proposition 13.2.1]{FrJa}, there exists a nonempty Zariski open subset $U\subset \Aa_{{\mathcal K}}^2$ such that
\vskip 1mm

\centerline{$\{a+b\lambda_{1} \hskip 2pt | \hskip 2pt (a,b) \in U\}\subset H_{{\mathcal K}(\lambda_{1})}(f_1,\ldots,f_m)$.}
\vskip 1mm

\noindent 
Furthermore, up to shrinking $U$, one may require that the polynomials 
\vskip 1mm

\noindent
(4) \hskip 25mm $f_i(\lambda_1,a \lambda_1+b, \lambda_3, \ldots,\lambda_{r},x)$, $i=1,\ldots,m$

\vskip 1mm

\noindent
are separable and of degree at least $2$ in $x$, and that $b\not=0$. As $R=k[u]\subset {\mathcal K}$ is infinite,  
the open subset $U$ contains elements $(a,b)\in R^2$. For such $(a,b)$, the polynomials above in (4)
  are in $K[\lambda_1,\lambda_3,\ldots,\lambda_{r},x]$ and are 
   irreducible in $K(\lambda_1,\lambda_3,\ldots,\lambda_{r})[x]$.
Repeating this procedure provides an  $(r-1)$-tuple $((a_2,b_2),\ldots, (a_r,b_r)) \in (R^2)^{r-1}$ with $b_2\cdots b_r\not=0$ such that the polynomials
\vskip 1,5mm

\centerline{$g_i(\lambda_1,x)=f_i(\lambda_1, a_2\lambda_1+b_2,\ldots, a_r\lambda_1+b_r,x)$, $i=1,\ldots,m$}
\vskip 1,5mm

\noindent
are in $K[\lambda_1,x]$, irreducible in $K(\lambda_1)[x]$, separable and of degree $\geq 2$ in $x$. 

From the proof in situation $r=n=1$ and in the separable case (in \S \ref{ssec:proof1}), there is an integer $\delta_0\geq 1$ with this property:
the Hilbert subset $H_K(g_1,\ldots,g_m)$ contains 
an element $\lambda_1^\ast \in R$ relatively prime to 
$\lambda_0^\ast \cdot b_2 \cdots b_r$ and such that $\deg(\lambda_1^\ast)=\delta_1$ if $\delta_1\geq \delta_0$. Request further to $\delta_0$ to satisfy: 
\vskip 1mm

\noindent
(5) \hskip 30mm $\delta_0 > \max_{2\leq i \leq r} \deg(b_i)$.

\vskip 1mm

Set $d_0 = \delta_0 + \max_{2\leq i \leq r} \deg(a_i)$ and fix an integer $d_1\geq d_0$. It follows from $d_1- \max_{2\leq i \leq r} \deg(a_i)\geq \delta_0$ that the Hilbert subset $H_K(g_1,\ldots,g_m)$ contains 
an element $\lambda_1^\ast \in R$ 
such that $\deg(\lambda_1^\ast)=d_1-\max_{2\leq i \leq r} \deg(a_i)$.

Consequently we have the following:

\noindent
-  the $r$-tuple $\underline \lambda^\ast = (\lambda_1^\ast, a_2\lambda_1^\ast+b_2, \ldots,a_{r-1}\lambda_1^\ast+b_{r-1},a_r \lambda_1^\ast+b_r)\in R^r$ is in the original Hilbert subset ${\mathcal H}$, 
and, 
denoting the $i$-th component of $\underline \lambda^\ast$ by $\lambda_i^\ast$,

\noindent
- $\lambda_1^\ast$ is relatively prime to $\lambda_0^\ast \lambda_2^\ast \cdots \lambda_r^\ast$,

\noindent
- the largest degree of $\lambda_1^\ast, \ldots, \lambda_r^\ast$ is $d_1$ (due to condition (5),
this largest degree is $\max_{2\leq i \leq r} \deg(a_i \lambda_1^\ast)$).

\noindent
This proves that $\underline \lambda^\ast  \in {\mathcal H}_{\lambda_0^\ast, d_1}$.
\vskip 2mm

\noindent
{\textbf{General case}: We will use the Kronecker substitution.  
The Hilbert subset ${\mathcal H}$ contains a Hilbert subset
\vskip 1mm

\centerline{$\displaystyle H_{K}(f_1,\ldots,f_m;g)=\left\{ \underline \lambda^\ast \in K^r \hskip 1mm \left | 
\begin{matrix}
\hskip 1mm f_i(\underline \lambda^\ast, \underline x)\hskip 1mm  \hbox{\rm irreducible in } K[\underline x] \cr
\hskip 1mm \hbox{\rm for each}\ i=1,\ldots,m, \hfill \cr
g(\underline \lambda^\ast) \not=0 \cr
\end{matrix} \right.
\right\}$}

\vskip 0,5mm

\hskip 7mm with $f_1,\ldots,f_m$ irreducible polynomials in $K[\underline \lambda,\underline x]$, 

\hskip 7mm of degree at least $1$ in $\underline x$ and $g\in K[\underline \lambda]$, $g\not=0$.
\vskip 1,5mm

As  in \S \ref{ssec:prop-Hilbertian-UFD}, Proposition \ref{prop:kronecker}, followed by  \cite[Lemma 12.1.4]{FrJa}, provides polynomials $g_1,\ldots,g_\nu$, irreducible in $K[\lambda_1,y]$, 
monic and of degree $\geq 2$ in $y$ with this property. For every $\lambda_1^\ast \in H_K(g_1,\ldots,g_\nu)$,
each of the polynomials
\vskip 1mm

\centerline{$f_i(\lambda_1^\ast,\lambda_2,\ldots,\lambda_r, \underline x)$, $i=1,\ldots,m$,}
\vskip 1mm

\noindent
is irreducible in $K[\lambda_2,\ldots,\lambda_r, \underline x]$. From the proof in situation $r=n=1$ (\S \ref{ssec:proof1}),  
the Hilbert subset $H_K(g_1,\ldots,g_\nu)$ contains infinitely many $\lambda_1^\ast \in R$ relatively prime to 
$\lambda_0^\ast$.
Repeating this argument $(r-2)$ times provides $\lambda_1^\ast,\ldots,\lambda_{r-1}^\ast\in R$ 
such that  $f_i(\lambda_1^\ast,\ldots, \lambda_{r-1}^\ast,\lambda_r, \underline x)$ is irreducible in $K[\lambda_r, \underline x]$ ($i=1,\ldots,m$) and $\lambda_i^\ast$ and $\lambda_0^\ast \lambda_1^\ast \cdots \lambda_{i-1}^\ast$ are relatively prime ($i=1,\ldots,r-1$).

Repeating the argument once more but applying this time the full conclusion of the case $r=n=1$ of the proof including the degree condition, we obtain that there is an integer $d_{0}$, which we may also choose to be larger than $\max_{1\leq i\leq r-1} \deg(\lambda_i^\ast)$, with the following property: if $d_1\geq d_{0}$, there exists an element $\lambda_r^\ast\in R$ such that

\noindent
- $f_i(\lambda_1^\ast,\ldots, \lambda_{r-1}^\ast,\lambda_r^\ast, \underline x)$ is irreducible in $K[\underline x]$, $i=1,\ldots,m$,}

\noindent
- $\lambda_r^\ast$ and $\lambda_0^\ast \lambda_1^\ast \cdots \lambda_{r-1}^\ast$ are relatively prime,

\noindent
- $\deg(\lambda_r^\ast) = \widetilde pd_1$.

\noindent
Finally the $r$-tuple $\underline \lambda^\ast$ is in the original Hilbert subset ${\mathcal H}$, $\lambda_i^\ast$ and $\lambda_0^\ast \lambda_1^\ast \cdots \lambda_{i-1}^\ast$ are relatively prime ($i=1,\ldots,r$), and consequently, $\lambda_1^\ast$ is relatively prime to $\lambda_0^\ast \lambda_2^\ast \cdots \lambda_r^\ast$, and $\max_{1\leq i\leq r} \deg(\lambda_i^\ast) =  \widetilde  p d_1$. Thus
 the set ${\mathcal H}_{\lambda_0^\ast, d_1}$ is nonempty.

\section{Proofs of the main results} \label{sec:Schinzel}

\subsection{Proof of Theorem \ref{thm:schinzel1} and Theorem \ref{thm:schinzel2}} \label{ssec:schinzel_proof}
Recall the notation from \S \ref{sec:preliminary}: $R$ is a UFD with fraction field $K$, $\underline x = (x_1,\ldots,x_n)$, $\lambdabar=(\lambda_0, \lambda_1,\ldots,\lambda_\ell)$ ($n\geq 1$, $\ell \geq 1$) are two tuples of indeterminates, ${\underline Q}= (Q_0, Q_1,\ldots,Q_\ell$), with $Q_0=1$,  is a $(\ell+1)$-tuple of nonzero polynomials in $R[\underline x]$, distinct up to multiplicative 
constants in $K^\times$, $\underline{P}=\{P_1,\ldots, P_s\}$ is a set of $s$ polynomials 
\vskip 1mm

\centerline{$P_i(\underline x,y)= P_{i\degP_i}(\underline x) \hskip 2pt y^{\degP_i} +\cdots+ P_{i1}(\underline x) \hskip 1pt y + P_{i0}(\underline x)$,}
\vskip 1mm

\noindent
irreducible in $R[\underline x,y]$ and of degree $\degP_i \geq 1$ in $y$, $i=1,\ldots,s$. We also set
\vskip 1mm

\centerline{$M(\lambdabar,\xbar) = \sum_{j=0}^{\ell} \lambda_j \hskip 1pt Q_j(\underline x)$}
\vskip 1mm

\noindent
and, for $i=1,\ldots,s$,

\vskip 1mm

\centerline{$F_i(\lambdabar,\xbar) 
= P_i(\underline x, M(\lambdabar,\xbar))= P_i(\underline x, \sum_{j=0}^\ell \lambda_j Q_j(\underline x))$}
\vskip 1mm

\noindent The polynomials $F_1,\ldots,F_s$ are irreducible in $R[\underline \lambda, \underline x]$ (Lemma \ref{lemma:first}). 
Finally, for $\underline F=\{F_1,\ldots,F_s\}$, we introduced the subset 
\vskip 1mm

\centerline{${H}_{R}({\underline F})  \subset R^{{\ell}+1}$} 
\vskip 1mm

\noindent
of all $(\ell+1)$-tuples $\underline \lambda^\ast$ (or equivalently, of polynomials 
$\Lambda (\underline x) = \sum_{j=0}^\ell \lambda_i^\ast \hskip 2pt Q_j(\underline x)$) such that 
$F_i(\underline \lambda^\ast, \underline x) = P_i(\underline x, \Lambda(\underline x))$ is irreducible in $R[\underline x]$, $i=1,\ldots,s$.

Given a nonzero element $\lambda_{-1}^\ast\in R$ and a tuple  $\underline a=(a_0,\ldots,a_\ell)\in R^{\ell+1}$, consider the subset 
\vskip 1mm

\centerline{${H}_{R,\lambda_{-1}^\ast,\underline a}({\underline F}) \subset {H}_{R}({\underline F})$} 
\vskip 1mm

\noindent
of those $(\ell+1)$-tuples $\underline \lambda^\ast=(\lambda_0^\ast,\ldots,\lambda_\ell^\ast) \in {H}_{R}({\underline F})$ which further satisfy
the congruences $\lambda_i^\ast \equiv a_i\hskip 1mm [{\rm mod}\hskip 1mm \lambda_{-1}^\ast \lambda_0^\ast \cdots \lambda_{i-1}^\ast]$, $i=0,\ldots,\ell$.

\vskip 1mm 

Make this additional assumption on $Q_0,\ldots,Q_\ell$ (which implies $\ell \geq 2$):
\vskip 1,5mm

\noindent
(1) \hskip 2mm {\it $Q_0,\ldots,Q_\ell$ are monomials with coefficient $1$, $Q_0=1$ and
\vskip 1mm

\centerline{$\displaystyle \min(\deg(Q_1), \deg(Q_2)) > \max_{1\leq i\leq s} \deg_{\underline x}(P_i)$.}}

\vskip 1,5mm

\begin{theorem} \label{thm:general} Let $\lambda_{-1}$ be a nonzero element of $R$ and  $\underline a = (1,\ldots,1)\in R^{\ell+1}$.
\vskip 0,5mm

\noindent
{\rm (a)} Assume that $R$ is a UFD and a Hilbertian ring and $K$ is imperfect if it is of cha\-rac\-te\-ristic $p>0$. The subset
${H}_{R,\lambda_{-1}^\ast,\underline a}({\underline F})$ is Zariski-dense in $R^{\ell+1}$. 
\vskip 1mm

\noindent
{\rm (b)} If $R=k[u]$ with $k$ an arbitrary field and $d_1$ is a suitably large integer, then
${H}_{R}({\underline F})$  contains
a polynomial $\Lambda = \sum_{j=0}^\ell \lambda_i^\ast \hskip 2pt Q_j(\underline x)$ with $\underline \lambda^\ast=(\lambda_0^\ast, \ldots, \lambda_\ell^\ast)\in R^{\ell+1}$
such that $\lambda_1^\ast$ and $\lambda_{-1}^\ast\lambda_0^\ast \lambda_2^\ast \cdots \lambda_\ell^\ast $ are relatively prime and $\deg_u(\Lambda) = \widetilde  p d_1$.
\end{theorem}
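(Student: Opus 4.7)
The plan is to combine the Hilbertian-ring theorems of \S\ref{sec:Hilbert} with a primitivity argument that upgrades irreducibility of $F_i(\underline\lambda^\ast,\underline x)$ over $K[\underline x]$ to irreducibility over $R[\underline x]$ via Gauss's lemma. The role of hypothesis~(1) on the monomials $Q_0,\ldots,Q_\ell$ and of the coprimality built into the congruences is precisely to guarantee this primitivity.

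By Lemma~\ref{lemma:first}(a) each $F_i$ is irreducible in $R[\underline\lambda,\underline x]$, hence in $K[\underline\lambda,\underline x]$, and of positive $\underline x$-degree; its $K[\underline\lambda]$-content is a divisor of the irreducible $F_i$, so is a unit, and $F_i$ is irreducible in $K(\underline\lambda)[\underline x]$. Thus $H_K(F_1,\ldots,F_s)$ is a Hilbert subset of $K^{\ell+1}$. For part~(a) I would apply Proposition~\ref{prop:I-HilbertUFD}(iii) with $\underline a=(1,\ldots,1)$ to produce $\underline\lambda^\ast\in R^{\ell+1}$ in this Hilbert subset whose coordinates satisfy $\lambda_i^\ast\equiv 1\pmod{\lambda_{-1}^\ast\lambda_0^\ast\cdots\lambda_{i-1}^\ast}$, and are therefore pairwise coprime and coprime to $\lambda_{-1}^\ast$. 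For part~(b) I would apply Theorem~\ref{lemma:hilbert2} (after indexing the tuple so that its distinguished coordinate is our $\lambda_1^\ast$) to obtain, for $d_1$ large enough, a tuple $\underline\lambda^\ast\in R^{\ell+1}$ in the Hilbert subset with $\lambda_1^\ast$ coprime to $\lambda_{-1}^\ast\lambda_0^\ast\lambda_2^\ast\cdots\lambda_\ell^\ast$ and $\max_i\deg_u(\lambda_i^\ast)=\widetilde p\,d_1$; since by (1) the $Q_j$ are monomials in $\underline x$ alone, this last equality reads $\deg_u(\Lambda)=\widetilde p\,d_1$. In both cases, for every prime $p\in R$, at most one of $\lambda_1^\ast,\lambda_2^\ast$ vanishes mod~$p$, so the reduction $\overline{\Lambda}=\sum_j\overline{\lambda_j^\ast}Q_j$ is nonzero in $(R/p)[\underline x]$ with $\deg_{\underline x}(\overline{\Lambda})\geq\min(\deg Q_1,\deg Q_2)>D_0:=\max_i\deg_{\underline x}(P_i)$ by hypothesis~(1).

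The main obstacle is the primitivity upgrade. Fix a prime $p\in R$ and set $D=\deg_{\underline x}(\overline{\Lambda})>D_0$. Each summand of
\[
\overline{F_i(\underline\lambda^\ast,\underline x)}=\sum_{j=0}^{\rho_i}\overline{P_{ij}}(\underline x)\,\overline{\Lambda}(\underline x)^j
\]
has $\underline x$-degree at most $D_0+jD<(j+1)D$, whereas if $\overline{P_{i,j+1}}\neq 0$ the $(j+1)$-st summand has $\underline x$-degree $\deg_{\underline x}(\overline{P_{i,j+1}})+(j+1)D\geq(j+1)D$. The nonzero summands therefore have pairwise distinct $\underline x$-degrees, so the sum vanishes only if every $\overline{P_{ij}}=0$, i.e.\ $p$ divides every coefficient of $P_i\in R[\underline x,y]$; this contradicts the primitivity of $P_i$ w.r.t.\ $R$, itself a consequence of its irreducibility in $R[\underline x,y]$. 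Hence no prime of $R$ divides the content of $F_i(\underline\lambda^\ast,\underline x)$, and Gauss's lemma combined with its irreducibility in $K[\underline x]$ yields irreducibility in $R[\underline x]$; so $\underline\lambda^\ast\in H_R(\underline F)$, and even in $H_{R,\lambda_{-1}^\ast,\underline a}(\underline F)$ in case~(a).

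For the Zariski-density in (a), I would intersect $H_K(F_1,\ldots,F_s)$ with an arbitrary nonempty Zariski open $\{g\neq 0\}\subset\overline K^{\ell+1}$: the resulting $H_K(F_1,\ldots,F_s;g)$ is again a Hilbert subset, to which the argument above applies, producing a point of $H_{R,\lambda_{-1}^\ast,\underline a}(\underline F)$ inside every such open set. The degree hypothesis~(1) on $Q_1,Q_2$ and the pairwise coprimality of the $\lambda_i^\ast$ are the essential ingredients of the leading-term argument: they rule out exactly the ``accidental'' common prime divisor of all coefficients illustrated by the example $(\lambda^2-\lambda)x+(\lambda^2-\lambda+2)$ in \S\ref{ssec:hilbertian_rings}, and this is why the specialization property can be pushed from $K$ down to~$R$.
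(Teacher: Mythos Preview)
Your proof is correct and follows the same approach as the paper: invoke Proposition~\ref{prop:I-HilbertUFD} (resp.\ Theorem~\ref{lemma:hilbert2}) to land in $H_K(\underline F;g)$ with the required coprimality, then use the degree inequality from hypothesis~(1) together with the coprimality of $\lambda_1^\ast,\lambda_2^\ast$ to force primitivity of each $F_i(\underline\lambda^\ast,\underline x)$ w.r.t.\ $R$. Your leading-term argument is in fact a slightly cleaner packaging of the paper's two-case analysis (the paper separates ``$\pi\mid P_{ij}$ for all $j\geq 1$'' from ``some $P_{ij}$ with $j\geq 1$ survives mod~$\pi$''), but the content is identical.
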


\begin{proof} 
The number of monomials $Q_i$ is $\ell+1\geq 3$. Each $F_i$ is of degree $\geq 1$ in $\underline x$ and is irreducible in $K(\underline \lambda)[\underline x]$, $i=1,\ldots,s$ (Lemma \ref{lemma:first}).  Let $g\in K[\underline \lambda]$ be a nonzero 
polynomial and consider the Hilbert subset 
\vskip 1mm

\centerline{${H}_{K}({\underline F};g)  \subset K^{{\ell}+1}$.} 
\vskip 1mm

In situation (a), it follows from Proposition \ref{prop:I-HilbertUFD} that the Hilbert subset ${H}_{K}({\underline F};g)$ contains
an $(\ell +1)$-tuple $\underline \lambda^\ast= (\lambda_0^\ast,\ldots,\lambda_\ell^\ast)\in R^{\ell+1}$ satisfying the congruences 
$\lambda_i^\ast \equiv 1\hskip 1mm [{\rm mod}\hskip 1mm \lambda_{-1}^\ast \lambda_0^\ast \cdots \lambda_{i-1}^\ast]$,  $i=0,\ldots,\ell$.

In situation (b), from Theorem \ref{lemma:hilbert2}, the Hilbert subset ${H}_{K}({\underline F};g)$ contains an $(\ell +1)$-tuple $\underline \lambda^\ast$ such that $\lambda_1^\ast$ and $\lambda_{-1}^\ast\lambda_0^\ast \lambda_2^\ast \cdots \lambda_\ell^\ast$ are relatively prime and $\displaystyle \max_{0\leq i\leq \ell} \deg(\lambda_i^\ast) =  \widetilde  p d_1$, i.e,
$\deg_{u}(\Lambda)  = \widetilde  p d_1$ for $\Lambda = \sum_{j=0}^\ell \lambda_i^\ast \hskip 2pt Q_j(\underline x)$. 

Each $F_i(\underline \lambda^\ast,\underline x)$ being irreducible in $K[\underline x]$, to finish the proof, it suffices to 
show that $F_i(\underline \lambda^\ast,\underline x)$ is primitive \hbox{w.r.t.} $R$ ($i=1,\ldots, s$).

Assume otherwise, \hbox{i.e.,} for some $i=1,\ldots,s$, there is an irreducible element $\pi\in R$ dividing all the coefficients 
of $F_i(\underline \lambda^\ast,\underline x)$. The quotient ring $\overline R=R/(\pi)$ is an integral domain. Use the notation $\overline U$ to denote the class modulo $(\pi)$ of polynomials $U$ with coefficients in $R$.
We have: \vskip 1mm

\noindent
(2) \hskip 12mm $\overline P_{i\degP_i}(\underline x) \hskip 2pt \overline M(\lambdabar^\ast,\xbar)^{\degP_i} +\cdots+ \overline P_{i1}(\underline x) \hskip 1pt \overline M(\lambdabar^\ast,\xbar) = - \overline P_{i0}(\underline x)$.
\vskip 1mm

\noindent
We distinguish two cases.

\vskip 1,5mm
 
 \noindent
  {\it 1st case}: $\pi$ divides all polynomials $P_{ij}(\underline x)$, $j=1,\ldots,\degP_i$. From (2), $\pi$ also
divides $P_{i0}(\underline x)$. This contradicts $P_i(\underline x,y)$ being primitive \hbox{w.r.t.} $R$.
\vskip 1,5mm
 
 \noindent
{\it 2nd case}: there is an index $j\in \{1,\ldots, \degP_i\}$ such that $\pi$ does not divides $P_{ij}(\underline x)$.
As $\lambda_1^\ast$ and $\lambda_2^\ast$ are relatively prime (in both situations (a) and (b)), one of the two is not divisible by $\pi$. Conjoin this with our monomials $Q_i$ being of coefficient $1$ to conclude that $\overline M(\lambdabar^\ast,\xbar)\not=0$ in $R/(\pi)[\underline x]$ and that there is at least one nonzero term $\overline P_{ij}(\underline x) \hskip 2pt \overline M(\lambdabar^\ast,\xbar)^{j}$ with $j\in \{1,\ldots, \degP_i\}$. 
Furthermore we have:
\vskip 1mm

\centerline{$\deg (\overline M(\underline \lambda^\ast,\underline x)) \geq \min (\deg(Q_1), \deg(Q_2))$.}
\vskip 1mm

\noindent
Using next the following inequality (coming from assumption (1)):

\vskip 1,5mm

\centerline{$\displaystyle \min(\deg(Q_1), \deg(Q_2)) > \max_{\genfrac{}{}{0pt}{}{1\leq i\leq s}{1\leq j\leq \degP_i}} \deg(P_{ij})$,}

\vskip 1,5mm

\noindent
we obtain that all nonzero terms $\overline P_{ij}(\underline x) \hskip 2pt \overline M(\lambdabar^\ast,\xbar)^{j}$ with $j\in \{1,\ldots, \degP_i\}$ are of different degrees: otherwise,  for two integers $j,k \in \{1,\ldots, \degP_i\}$ with $k>j$, we would have the following, where $\delta= \deg(\overline M(\lambdabar^\ast,\xbar))$:
\vskip 1,5mm

\centerline{$\displaystyle \max_{\genfrac{}{}{0pt}{}{1\leq i\leq s}{1\leq h\leq \degP_i}} \deg({P}_{ih}) \geq \deg(\overline{P}_{ij})-\deg(\overline{P}_{ik}) = (k-j)\delta \geq \delta$,}
\vskip 1mm

\noindent
which contradicts the preceding inequalities. It follows that the left-hand side of (2) is of degree $\geq \deg (\overline M(\lambdabar^\ast,\xbar))$. 
But then the following inequality (using again assumption (1)):

\vskip 1mm

\centerline{$\displaystyle \deg (\overline M(\underline \lambda^\ast,\underline x)) \geq \min (\deg(Q_1), \deg(Q_2)) >  \max_{1\leq i\leq s} \deg(P_{i0}(\underline x))$.}
\vskip 1mm

\noindent
contradicts identity (2).
\end{proof}

\begin{proof}[Proof of Theorem \ref{thm:schinzel1} and Theorem \ref{thm:schinzel2}] 
From Theorem \ref{lemma:hilbert1}, the assumption on $R$ in Theorem \ref{thm:schinzel1} implies that of Theorem \ref{thm:general}(a), and $R=k[u]$ in both Theorem \ref{thm:schinzel2} and Theorem \ref{thm:general}(b). Theorems \ref{thm:schinzel1} and \ref{thm:schinzel2} then correspond to the special case of Theorem \ref{thm:general} for which, for a given $\underline d\in (\Nn^\ast)^n$,  the $Q_i$ are all the monomials
 $Q_0,Q_1,\ldots,Q_{N_{\underline d}}$ in ${\mathcal Pol}_{k,n,\underline d}$
and $Q_1$, $Q_2$ are monomials of  degree 
$d_1+\cdots+d_n$ and $d_1+\cdots+d_n-1$. Assumption on $d_1,\ldots,d_n$ in Theorems \ref{thm:schinzel1} and \ref{thm:schinzel2} guarantees assumption (1) of Theorem \ref{thm:general}.
\end{proof}

\begin{remark} 
\noindent
The proof shows that Theorem \ref{thm:schinzel1} holds under the more general assumption that $R$ is a UFD, a Hilbertian ring and $K$ is imperfect if of characteristic $p>0$. We note that there exist UFD with a Hilbertian 
fraction field satisfying the imperfectness assumption but not Hilbertian as a ring, e.g. the ring $\Cc[[u_1,\ldots,u_n]]$ of formal power series with $n\geq 2$ \cite[Example 15.5]{FrJa}. It is unclear whether Theorem \ref{thm:schinzel1} holds for these 
rings.
\end{remark}

\subsection{The multivariable Schinzel hypothesis} \label{ssec:multi_schinzel}
Theorem \ref{thm:general} offers more flexibility than Theorem \ref{thm:schinzel1} and Theorem \ref{thm:schinzel2}. Instead
of taking for $Q_0,\ldots,Q_\ell$ all the monomials in ${\mathcal Pol}_{k,n,\underline d}$, one may want to work with a proper subset of them and construct irreducible polynomials of the form $P_i(\underline x, M(\underline x))$ with some of the coefficients  in $M(\underline x)$ equal to $0$.

In this manner one can  extend Theorems \ref{thm:schinzel1} and \ref{thm:schinzel2} to the situation that $P_1,\ldots,P_s$ are polynomials in $m$ variables $y_1,\ldots,y_m$. 

Let $R$ be a UFD with fraction field a field $K$ with the product formula, imperfect if $K$ is of characteristic $p>0$. Let $\underline x=(x_1,\ldots,x_n)$ {\rm (}$n\geq 1${\rm )} and $\underline y=(y_1,\ldots,y_m)$  {\rm (}$m\geq 1${\rm )} be two tuples of 
indeterminates. 

\begin{theorem} \label{thm:schinzel3}
Let $\underline P=\{P_1,\ldots,P_s\}$ be a set of polynomials, irreducible in $R[\underline x,\underline y]$ 
and of degree $\geq 1$ in $\underline y$. 
Let ${\mathcal Irr}_{n}(R,\underline P)$ be the set of all $m$-tuples $\underline M = (M_1,\ldots,M_m)\in R[\underline x]^m$ such that $P_i(\underline x, \underline M(\underline x))$ is irreducible in $R[\underline x]$, $i=1,\ldots, s$.
For every $\underline d\in (\Nn^\ast)^n$ such that 
\vskip 1mm

\centerline{$D := \displaystyle d_1+\cdots+d_n \geq \max_{1\leq i \leq s} (\deg (P_i)+2)$,} 
\vskip 1mm

\noindent
the set 
$ {\mathcal Irr}_{n}(R,\underline P)$ is 
Zariski-dense in ${\mathcal Pol}_{R,n,\underline d} \times \cdots \times {\mathcal Pol}_{R,n,D^{m-1} \underline d}$.
 \end{theorem}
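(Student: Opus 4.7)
The plan is to adapt the strategy of Theorem~\ref{thm:general} from one $y$-variable to $m$ of them, by parametrizing each component of the tuple $\underline M = (M_1, \ldots, M_m)$ linearly. For $j = 1, \ldots, m$, set
$$M_j(\underline\lambda_j, \underline x) = \sum_{\alpha \,:\, 0 \leq \alpha_i \leq D^{j-1} d_i} \lambda_{j,\alpha}\, \underline x^{\alpha},$$
collect the full parameter tuple $\underline\lambda$ in $R^N$ (with $N = \sum_j \prod_i (D^{j-1} d_i + 1)$), and define
$$F_i(\underline\lambda, \underline x) = P_i(\underline x, M_1(\underline\lambda_1, \underline x), \ldots, M_m(\underline\lambda_m, \underline x)), \quad i = 1,\ldots,s.$$
The set ${\mathcal Irr}_n(R,\underline P)$ corresponds to those $\underline\lambda^* \in R^N$ for which every $F_i(\underline\lambda^*, \underline x)$ is irreducible in $R[\underline x]$, and Zariski-density of this set is what must be shown.

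First I would show each $F_i$ irreducible in $R[\underline\lambda, \underline x]$, generalizing Lemma~\ref{lemma:first}(a). The $R[\underline x]$-automorphism of $R[\underline\lambda, \underline x]$ that sends $\lambda_{j, \underline 0} \mapsto \lambda_{j, \underline 0} + \sum_{\alpha \neq \underline 0}\lambda_{j,\alpha}\underline x^\alpha$ for every $j$ and fixes the other generators carries $\lambda_{j,\underline 0}$ to $M_j$, hence $P_i(\underline x, \lambda_{1, \underline 0}, \ldots, \lambda_{m, \underline 0})$ to $F_i$. Since the former is irreducible in $R[\underline x, \underline\lambda]$ (because $P_i(\underline x, \underline y)$ is irreducible in $R[\underline x, \underline y]$ and the remaining $\underline\lambda$-parameters are algebraically independent over it), so is $F_i$. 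Next I invoke Hilbert theory over rings: Theorem~\ref{lemma:hilbert1} gives that $R$ is a Hilbertian ring, and Proposition~\ref{prop:I-HilbertUFD}(iii), applied with target residues $a_i = 1$ and a suitable seed $\lambda_{-1}^* \in R \setminus\{0\}$, furnishes a Zariski-dense subset of $\underline\lambda^* \in R^N$ for which each $F_i(\underline\lambda^*, \underline x)$ is irreducible in $K[\underline x]$ and for which all components $\lambda_{j, \alpha}^*$ are pairwise coprime in $R$.

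The main obstacle is the third step, upgrading irreducibility over $K$ to irreducibility over $R$ by checking primitivity. Suppose a prime $\pi \in R$ divides every coefficient of $F_i(\underline\lambda^*, \underline x)$; reducing modulo $\pi$ yields $\overline{P_i}(\underline x, \overline{M_1}, \ldots, \overline{M_m}) = 0$ in $(R/\pi)[\underline x]$, while $\overline{P_i} \neq 0$ since $P_i$ is primitive with respect to $R$ (being irreducible in $R[\underline x, \underline y]$). Pairwise coprimality forces at most one $\lambda_{j,\alpha}^*$ to vanish modulo $\pi$. Expanding
$$\overline{P_i}(\underline x, \overline{M_1}, \ldots, \overline{M_m}) = \sum_{\underline b} \overline{P_{i,\underline b}}(\underline x)\, \overline{M_1}^{b_1} \cdots \overline{M_m}^{b_m},$$
with $\underline b$ running over the monomial support of $P_i$ (so that $b_j \leq \deg_{y_j}(P_i) < D$ by the hypothesis $D \geq \max_i\deg(P_i)+2$), the Kronecker-type scaling $D^{j-1}\underline d$ of the partial-degree bounds is precisely what makes $\underline b \mapsto \sum_j b_j D^{j-1}$ a genuine base-$D$ expansion, hence injective on the support; combined with $|\underline a| \leq \deg_{\underline x}(P_i) < D$, this isolates a unique leading term in the expansion which cannot vanish --- the required contradiction.

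The delicacy in the primitivity step lies in the ``exceptional'' case where the lone vanishing $\lambda_{j_0, \alpha}^*$ modulo $\pi$ happens to be the coefficient of the naive top-multidegree monomial of $M_{j_0}$, which would drop $\deg(\overline{M_{j_0}})$ by one from the expected $D^{j_0-1}D$. The workaround, in the spirit of the pair $(Q_1, Q_2)$ in Theorem~\ref{thm:general}, is to designate in each $M_j$ two pivot monomials of coordinated multi-degrees --- for $n \geq 2$ two different monomials inside the homogeneous slice of total degree $D^{j-1}D - 1$ (of which there are $n$) --- so that pairwise coprimality guarantees at least one pivot survives modulo $\pi$, restoring the expected top-degree structure of $\overline{M_j}$ under which the Kronecker injectivity still selects a unique dominant term.
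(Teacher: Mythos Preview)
Your approach is a genuine alternative to the paper's, but it is not complete as written, and the paper's route is much shorter.

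\textbf{What the paper does.} The paper's proof is literally one sentence: induct on $m$, using Theorem~\ref{thm:general} to specialize $y_1,\ldots,y_m$ one at a time. At step $j$ one views $P_i(\underline x,M_1(\underline x),\ldots,M_{j-1}(\underline x),y_j,\ldots,y_m)$ as a polynomial in $(\underline x,y_{j+1},\ldots,y_m)$ and the single variable $y_j$, takes the $Q$'s to be the monic monomials in $x_1,\ldots,x_n$ of multi-degree $\le D^{j-1}\underline d$, checks condition~(1) of Theorem~\ref{thm:general} (the degree bound $D\ge\max_i\deg(P_i)+2$ is exactly what propagates through), and applies that theorem. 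The primitivity argument is thus done once, inside Theorem~\ref{thm:general}, and never revisited.

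\textbf{Where your argument is incomplete.} Your direct approach parametrizes all $M_j$ simultaneously and re-runs the primitivity argument in one shot. The irreducibility over $K$ goes through as you say. The gap is in the last two paragraphs. You correctly observe that at most one coefficient $\lambda^*_{j_0,\alpha_0}$ dies modulo $\pi$, so $\delta_j:=\deg(\overline{M_j})$ equals $D^j$ for every $j$, except that possibly $\delta_{j_0}=D^{j_0}-1$. Your proposed fix --- two pivot monomials in the homogeneous slice of degree $D^{j_0}-1$ --- does not ``restore'' $\delta_{j_0}=D^{j_0}$; it only guarantees $\delta_{j_0}\ge D^{j_0}-1$, which you already had. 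What is missing is the actual degree computation in this perturbed case: one must check that the terms $\overline{P_{i,\underline b}}(\underline x)\,\overline{M_1}^{b_1}\cdots\overline{M_m}^{b_m}$ still have pairwise distinct total degrees. This is true, because the degree of such a term is
\[
\deg(\overline{P_{i,\underline b}})+\sum_j b_j D^j - \varepsilon\, b_{j_0}\qquad(\varepsilon\in\{0,1\}),
\]
the values $\sum_j b_j D^j$ are distinct multiples of $D$ (base-$D$ uniqueness, using $b_j\le\deg_{y_j}P_i\le D-2$), and the correction $\deg(\overline{P_{i,\underline b}})-\varepsilon\, b_{j_0}$ lies in $(-D,D)$ since $|\underline b|+\deg(\overline{P_{i,\underline b}})\le\deg P_i\le D-2$. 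So two such degrees can only agree if the base-$D$ parts agree, forcing $\underline b=\underline b'$. This is the sentence your argument needs; the ``two pivots in the same slice'' device is neither necessary nor sufficient.

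\textbf{Comparison.} Both routes work once this is fixed. The inductive argument is cleaner because it reuses Theorem~\ref{thm:general} verbatim and keeps the primitivity analysis one-dimensional; your direct argument is more self-contained but forces you to redo that analysis in a genuinely harder multi-index setting.
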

 
 The proof is an easy induction left to the reader: use Theorem \ref{thm:general} to successively specialize  in $R[\underline x]$ the indeterminates $y_1,\ldots,y_m$. 
 
 \subsection{Proof of Corollary \ref{cor:goldbach} (Goldbach)}  \label{ssec:proof-goldbach}
 
Fix an integral domain $R$ as in Theorem \ref{thm:schinzel1}, an integer $n\geq 1$ and a nonconstant polynomial ${\mathcal Q}\in R[\underline x]$. 

 Let $\underline P = \{P_1,P_2\}$ with $P_1= -y$ and $P_2=y+{\mathcal Q}$. We will proceed as in Theorem \ref{thm:general} but with only two monomials $Q_0,Q_1$  (so $\ell = 1$) and without assuming condition (1) from \S \ref{ssec:schinzel_proof}. 
 
 Assume that we are not in the case $n=1=\deg({\mathcal Q})$; this case is dealt with separately. Let $Q_\infty$ be a monic nonconstant monomial appearing in ${\mathcal Q}$ with a nonzero coefficient. Denote this coefficient by $q_\infty$. Let $Q_1$ be a nonconstant monomial distinct from $Q_\infty$  and of degree $\deg(Q_1) \leq \deg({\mathcal Q})$. Denote the coefficient of $Q_0=1$ in ${\mathcal Q}$ by $q_0$ (the constant coefficient).
 
 As in the proof of Theorem \ref{thm:general}, Proposition \ref{prop:I-HilbertUFD} provides nonzero $\lambda_0^\ast, \lambda_1^\ast$ in $R$ satisfying the following: for $M=\lambda_0^\ast + \lambda_1^\ast Q_1$, both $M$ and $M+{\mathcal Q}$ are irreducible in $K[\underline x]$, $\lambda_0^\ast \equiv 1-q_0\hskip 1mm [{\rm mod}\hskip 1mm  q_\infty]$ and $\lambda_1^\ast \equiv 1\hskip 1mm [{\rm mod}\hskip 1mm  \lambda_0^\ast]$ 
 (the elements $q_\infty, \lambda_0^\ast, \lambda_1^\ast$ play the respective roles of $\lambda_0^\ast, \lambda_1^\ast, \lambda_2^\ast$ from Proposition \ref{prop:I-HilbertUFD}}).

To conclude, it suffices to show that $M$ and $M+{\mathcal Q}$ are primitive. As $\lambda_0^\ast$ and $\lambda_1^\ast$ are relatively prime, $M$ is primitive. As for
$M+{\mathcal Q}$, it follows from this: the coefficients of $Q_\infty$ and $Q_0$ in $M+{\mathcal Q}$ are relatively prime. Indeed the former is  $q_\infty$ and the latter is $\lambda_0^\ast + q_0$ which is congruent to $1$ \hbox{modulo $q_\infty$.}

Finally, in the case $n=1=\deg({\mathcal Q})$, write ${\mathcal Q}=q_1x+ q_0$. We can take:
\vskip 2mm

\centerline{$\displaystyle \left\{\begin{matrix}
&\hbox{\rm if}\  q_1\not=1 \hfill &{\mathcal Q} =  [x + (q_0-1)] + [(q_1-1)x + 1] \hfill \cr
&\hbox{\rm if}\  q_1\not=-1 \hfill &{\mathcal Q} =  [-x + (q_0-1)] + [(q_1+1)x + 1] \hfill \cr
&\hbox{\rm if}\  q_1=1=-1 \hfill &{\mathcal Q} =  [rx + (rq_0+1)] + [(r+1)x + (rq_0+q_0+1)] \hfill \cr
&\hfill & \hskip 2mm \hbox{with} \ r\in R\setminus\{0,1\}. \hfill \cr
\end{matrix}\right.$}
\vskip 2mm

The more specific conclusion, alluded to in \S \ref{ssec:goldbach}, that one can further take $\deg(Q_1)=1$ if $R=K$ is a Hilbertian field, or if $R=K$ is an infinite field and $n\geq 2$, can be obtained from similar arguments but using the  Addendum to Theorem \ref{thm:schinzel1} (in \S \ref{sec:preliminary}) and Theorem \ref{thm:Main2} instead 
of Theorem \ref{thm:general}.

\subsection{Proof of Theorem \ref{thm:Main}} \label{ssec:proofMain}
Retain the notation from \S \ref{ssec:schinzel_proof}
but consider the degree $1$ case. That is, we have, 
for $i=1,\ldots,s$:

\vskip 0,5mm
\centerline{
$\left\{\begin{matrix}
P_i = A_i(\underline x) + B_i(\underline x)\hskip 1pt  y \hfill \\
F_i(\lambdabar,\xbar) = A_i(\xbar) + B_i(\xbar) \left(\sum_{j=0}^\ell \lambda_j Q_j(\underline x)\right). \\
\end{matrix}\right.$
}
\vskip 0,5mm

\noindent
Assume further that 
the polynomials $Q_i$ are the 
monomials
$Q_0,Q_1,\ldots,Q_{N_{\underline d}}$ in ${\mathcal Pol}_{k,n,\underline d}$ for some $\underline d\in (\Nn^\ast)^n$, 
with as before $Q_0=1$ and $Q_1$ and $Q_2$ monomials of  degree 
$d_1+\cdots+d_n$ and $d_1+\cdots+d_n-1$.

\begin{lemma} \label{lemma:degree1_separable} If as above $\deg_y(P_1)=\ldots=\deg_y(P_s) = 1$, then the Hilbert subset $H_K(F_1,\ldots,F_s)\subset K^{N_{\underline d}+1}$ contains a separable Hilbert subset.
\end{lemma}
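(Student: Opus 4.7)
The plan is to apply the Kronecker substitution (Proposition \ref{prop:kronecker}) to each $F_i \in K[\underline \lambda, \underline x]$, reducing to the one-variable ring $K[\underline \lambda, y]$, and then isolate within the irreducible factorization of $S_D(F_i)$ a single $\underline \lambda$-depending factor $\tilde g_i$ that is separable in $y$. By Lemma \ref{lemma:first}(a), $F_i$ is irreducible in $K[\underline \lambda, \underline x]$, and the assumption $\deg_y(P_i)=1$ makes it affine-linear in $\underline \lambda$:
$$F_i = A_i(\underline x) + \sum_{j=0}^\ell \lambda_j B_i(\underline x) Q_j(\underline x).$$
In characteristic $0$ every irreducible polynomial is separable and the conclusion is immediate from Proposition \ref{prop:kronecker}, so henceforth assume $\charac(K) = p > 0$.

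I would pick an integer $D$ strictly larger than every partial $x_j$-degree of each $F_i$, and set $c_i(y) := \gcd\bigl(S_D(A_i), S_D(B_i Q_0), \ldots, S_D(B_i Q_\ell)\bigr) \in K[y]$ and $\tilde g_i := S_D(F_i)/c_i \in K[\underline \lambda, y]$. Then $\tilde g_i$ is primitive in $\underline \lambda$ over $K[y]$ with $\deg_{\underline \lambda}(\tilde g_i) = 1$; a standard argument (if $\tilde g_i=h_1h_2$ with $\deg_{\underline\lambda}(h_1)=0$, then $h_1\in K[y]$ divides every $\underline \lambda$-coefficient of $\tilde g_i$, hence their gcd $=1$) gives irreducibility of $\tilde g_i$ in $K[\underline \lambda, y]$, and hence by Gauss in $K(\underline \lambda)[y]$.

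The only non-routine step is the separability of $\tilde g_i$ in $y$. The key observation is this: since $Q_0 = 1$ and $x_1$ lies among the $Q_j$ (because $d_1 \geq 1$)---say $Q_{j_1} = x_1$---the $\lambda_0$- and $\lambda_{j_1}$-coefficients of $S_D(F_i)$ are $S_D(B_i)$ and $S_D(B_i)\cdot y$ respectively. Dividing by $c_i$, the corresponding coefficients of $\tilde g_i$ are some $\beta_0(y)$ and $y\,\beta_0(y)$, whose ratio is $y$. If $\tilde g_i$ lay in $K(\underline \lambda)[y^p]$, both coefficients would belong to $K[y^p]$, forcing $y \in K(y^p)$---impossible. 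Hence $\tilde g_i \notin K(\underline \lambda)[y^p]$ and $\tilde g_i$ is separable in $y$.

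To finish, Proposition \ref{prop:kronecker} gives $H_K(F_i) \supseteq H_K({\mathcal S}(F_i); \varphi_i)$, where ${\mathcal S}(F_i)$ is the set of irreducible factors of $S_D(F_i)$ in $K[\underline \lambda, y]$. Beyond $\tilde g_i$, these factors come from the content $c_i(y) \in K[y]$ and are irreducible in $K[y]$; the associated Hilbert conditions are independent of $\underline \lambda$ and automatically satisfied, so $H_K({\mathcal S}(F_i); \varphi_i) = H_K(\tilde g_i; \varphi_i)$, which is a separable Hilbert subset. Intersecting over $i = 1, \ldots, s$ yields the separable Hilbert subset $H_K(\tilde g_1, \ldots, \tilde g_s; \varphi_1 \cdots \varphi_s) \subseteq H_K(F_1, \ldots, F_s)$, as required.
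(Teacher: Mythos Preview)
Your proof is correct and follows essentially the same approach as the paper: apply the Kronecker substitution, divide $S_D(F_i)$ by its $K[y]$-content (your $c_i$ coincides with the paper's $\Delta=\gcd(S_D(A_i),S_D(B_i))$ since $Q_0=1$), and check that the resulting polynomial is irreducible and separable in $y$. The paper obtains irreducibility by invoking Lemma~\ref{lemma:first} over $\overline K$, while you use a direct Gauss-type argument from $\deg_{\underline\lambda}=1$; for separability the paper asserts that ``not all exponents of $x$ in $f_i$ are divisible by $p$'' via the generic-polynomial remark, whereas your pairing of the $\lambda_0$- and $\lambda_{j_1}$-coefficients (with ratio $y$) makes the same point more explicitly---both arguments ultimately rest on $Q_0=1$ and $x_1$ being among the $Q_j$.
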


\begin{proof}[Proof of Lemma \ref{lemma:degree1_separable}]
Fix $D>\max_{ { {1\leq j\leq n}\atop
{1\leq i\leq s} } }\deg_{x_j}(F_i)$ and consider the Kronecker substitution:

\vskip 1mm

\centerline{$S_D: {\mathcal Pol}_{K(\underline \lambda), n,\underline D} \rightarrow {\mathcal Pol}_{K(\underline \lambda), 1,D^{n}}$, \hskip 2mm with $\underline D=(D,\ldots,D)$,}

\vskip 1mm

\noindent
mapping  $x_j$ to $x^{D^{j-1}}$, $j=1,\ldots,n$ (introduced in \S \ref{ssec:Hilbertian-UFD}). Fix $i\in \{1,\ldots, s\}$. 
From Proposition \ref{prop:kronecker},  there exist a finite set ${\mathcal S}(F_i)$ of irreducible polynomials in $K[\underline \lambda][x]$ of degree $\geq 1$ in $x$ and a nonzero polynomial $\varphi_i \in K[\underline \lambda]$ such that the Hilbert subset ${H}_{K}(F_i)\subset K^{N_{\underline d}+1}$ contains the Hilbert subset 
$H_{K}({\mathcal S}(F_i);\varphi_i)$.
Furthermore, one can take for ${\mathcal S}(F_i)$ the set of irreducible divisors in $K[\underline \lambda][x]$ of
the following polynomial (in which $M_{\underline d} = \sum_{h=0}^{N_{{\underline d}}} \lambda_h \hskip 2pt Q_h$):

\centerline{$\displaystyle S_D(A_i+B_iM_{\underline d}) = S_D(A_i) + S_D(B_i) \sum_{h=0}^{N_{{\underline d}}} \lambda_h \hskip 2pt S_D(Q_h)$.}
The polynomials $S_D(Q_h)$ are distinct monomials in $x$ (up to multiplicative constants in $K^\times$): this indeed follows from the fact that two different integers between $0$ and $D^{n-1}-1$ have different $D$-adic expansions $a_1+a_2D+\cdots+a_{n-1}D^{n-2}$ with $0\leq a_j \leq D-1$, $j=1,\ldots,n-1$.

Note that $S_D(A_i)$ and $S_D(B_i)$ may not be relatively prime (take for example $A_i=x_2-1$ and $B_i=x_3-1$) and so Lemma \ref{lemma:first} cannot be used directly. Denote the gcd of $S_D(A_i)$ and $S_D(B_i)$ by $\Delta\in K[x]$.
Conclude from Lemma \ref{lemma:first} that the polynomial

\centerline{$\displaystyle f_i := \frac{S_D(A_i+B_iM_{\underline d})}{\Delta} = \frac{S_D(A_i)}{\Delta} + \frac{S_D(B_i)}{\Delta} \sum_{h=0}^{N_{\underline d}} \lambda_h \hskip 2pt S_D(Q_h)$}

\noindent
is irreducible in $\overline K[\underline \lambda,x]$. Since $\Delta\in K[x]$, its irreducible factors $f$ in $K[\underline \lambda,x]$ are in fact in $K[x]$, and so satisfy $H_K(f)=K^{N_{\underline d}+1}$. Conclude that one can take ${\mathcal S}(F_i)=\{f_i\}$ where $f_i$ is the polynomial displayed above.

The polynomial $f_i$ has an additional property: it is separable in $x$. Indeed, if $p>0$, not all exponents of $x$ in $f_i$ are divisible by $p$ (note that $\sum_{h=0}^{N_{{\underline d}}} \lambda_h \hskip 2pt S_D(Q_h)$ is the generic polynomial in one variable of degree $D^n-1$).

We have thus proved that the Hilbert subset $H_K(F_1,\ldots,F_s)\subset K^{N_{\underline d}+1}$ contains the separable Hilbert subset $H_K(f_1,\ldots,f_s; \varphi_1 \cdots \varphi_s)$.
\end{proof}

\begin{proof}[Proof of Theorem \ref{thm:Main}]
The statement is about polynomials in at least $2$ variables that are denoted $x_1,\ldots,x_n$  there. For consistency with the previous notation, we relabel them here $u,x_1,\ldots, x_n$, with $n\geq 1$. Set $R=k[u]$ and view $k[u,x_1,\ldots, x_n]$ as $R[\underline x]$. 

Up to adding it to the given list $(A_1,B_1),\ldots, (A_s,B_s)$ of couples of relatively prime polynomials in $R[\underline x]$, one may assume that the couple $(1,0)$ is in this list; 
this will guarantee that the desired polynomial $M$ is itself irreducible in $R[\underline x]$ as requested.

With the notation from this subsection, Lemma \ref{lemma:degree1_separable} gives that the Hilbert subset $H_K(F_1,\ldots,F_s)\subset K^{N_{\underline d}+1}$ contains a separable Hilbert subset, say $H_K(f_1,\ldots,f_s; \varphi)$.
From the separable case of Theorem \ref{lemma:hilbert2}, there is an integer $d_0$ such that for every integer $\delta \geq d_0$,
$H_K(f_1,\ldots,f_s; \varphi)$ contains a tuple $\underline \lambda^\ast 
\in R^{N_{\underline d}+1}$ such that 
$\lambda_1^\ast$ and $\lambda_2^\ast$ are relatively prime in $R$ and $\deg_u(M_{\underline d}(\underline \lambda^\ast,\underline x))=\delta$. We have {\it a fortiori} $\underline \lambda^\ast \in H_K(F_1,\ldots,F_s)\subset K^{N_{\underline d}+1}$: 
\vskip 1mm

\centerline{$F_i(\underline \lambda^\ast, \underline x) = A_i(\underline x) + B_i(\underline x) M_{\underline d}(\underline \lambda^\ast, \underline x)$ is irreducible in $K[\underline x]$, $i=1,\ldots,s$.}
\vskip 1mm

\noindent
Assume $d_0$ large enough so that,  if $d_i\geq d_0$, $i=1,\ldots,n$, then
\vskip 0,5mm

\centerline{$\displaystyle d_1+\cdots+d_n - 1 > \max_{i=1,\ldots,s} \max(\deg(A_i), \deg(B_i))$.} 
\vskip 0,5mm

\noindent
Irreduciblility of each $A_i(\underline x) + B_i(\underline x) M_{\underline d}(\underline \lambda^\ast, \underline x)$ in $R[\underline x]$ is deduced  by proving it is primitive from $\lambda_1^\ast$, $\lambda_2^\ast$ being relatively prime as in the proof of \hbox{Theorem \ref{thm:general}.}

Finally, up to multiplying $\varphi$ by the coordinate $\lambda_h$ corresponding to the monomial $x_1^{d_1} \cdots x_n^{d_n}$, one guarantees $\deg_{x_i}(M_{\underline d}(\underline \lambda^\ast, \underline x)) = d_i$, $i=1,\ldots,n$. This completes the proof: $M_{\underline d}(\underline \lambda^\ast, \underline x)$ is the requested polynomial.
\end{proof}

\begin{remark} \label{rem:degree1_Hilbertian}
Lemma \ref{lemma:degree1_separable} also shows that the degree $1$ case of the Schinzel hy\-po\-the\-sis holds when $R$ is a Hilbertian field (totally Hilbertian is not needed), thus completing the proof of the addendum to Theorem \ref{thm:schinzel1} in situation (b).
\end{remark}

\subsection{Proof of Corollary \ref{cor:spectre}} \label{sec:spectra}

Assume $n\geq 2$, fix an arbitrary field $k$, a subset  ${\mathcal S}=\{a_1,\ldots,a_t\}\subset k$, $a_0\in \overline k\setminus {\mathcal S}$, separable over $k$, and $V\in k[\underline x]$, $V\not=0$. We will show this more precise version of Corollary \ref{cor:spectre}. 
\vskip 2mm

\noindent
{\bf Corollary \ref{cor:spectre} {\rm (explicit form)}.} 
{\it Let $w_0,\ldots,w_t\in k[\underline x]$ be $t+1$ nonzero polynomials with $w_0=1$. Assume that $(w_i) +(w_j) = k[\underline x]$
for $i\not=j$ and each $w_i$ is relatively prime to $V$. For all suitably large integers $d_1,\ldots,d_n$ {\rm (}larger than some $d_0$ depending on ${\mathcal S}, a_0, V, w_1,\ldots,w_t${\rm )}, there is a polynomial $U\in k[\underline x]$ such that  these three conclusions hold:
\vskip 1mm

\noindent
\hskip -3mm
$\begin{matrix}
& \hbox{{\rm (a)} $U- a_i V = w_iH_i$ with $H_i\in k[\underline x]$ irreducible in $k(a_0)[\underline x]$} \hfill\\
& \hbox{\hskip 35mm and not dividing $w_i$, $i=1,\ldots,t$, }  \hfill\\
& \hbox{{\rm (b)} $\deg(U-a_0V) =\max(\deg(U),\deg(V))$,}  \hfill\\
& \hbox{{\rm (c)} $\deg_{x_i}(U)=d_i$, $i=1,\ldots,n$.} \hfill \\
\end{matrix}$

}
\vskip 2mm
 
In order to obtain the version of Corollary \ref{cor:spectre} from \S \ref{sec:intro}, it suffices to choose $w_1,\ldots,w_t$ as in the statement 
but not in $k$ and 
\vskip 1mm

\centerline{$d_1 > \max(\deg(V), \deg(w_1), \ldots, \deg(w_t))$.}
\vskip 1mm

\noindent
It then follows from $\deg(U)\geq \deg_{x_1}(U) = d_1$ (using (c)) that $\deg(U- a_i V) = \deg(U)$,
and next from (a) that $U- a_i V$ is reducible, $i=1,\ldots,t$.

\begin{remark} The assumption $(w_i) +(w_j) = k[\underline x]$ is necessary when $V=1$: if we have $U - a_i V = w_iH_i$ and $U - a_j V = w_jH_j$ for two distinct indices $i,j$, then $w_iH_i - w_j H_j = (a_j-a_i) V$. 
\end{remark}

\begin{proof} 
As $(w_i) +(w_j) = k[\underline x]$, $i\not=j$, the Chinese Remainder Theorem may be used to conclude that there is a polynomial $U_0\in k[\underline x]$
such that 
\vskip 1mm

\centerline{$U_0- a_iV = w_i p_i$ with $p_i\in k[\underline x]$, $i=1,\ldots,t$.}
\vskip 1mm

\noindent 
As $w_0=1$, we also have $U_0- a_0V = w_0 p_0$ 
for some $p_0$, but here $p_0$ is in $k(a_0)[\underline x]$. Furthermore the polynomials $U \in k(a_0)[\underline x]$
satisfying the same $(t+1)$ conditions are of the form
\vskip 1mm

\centerline{$\displaystyle U(\underline x) = U_0(\underline x) + M(\underline x) \prod_{i=0}^t w_i(\underline x)$}
\vskip 1mm

\noindent
for some $M\in k(a_0)[\underline x]$. For such a polynomial $U$, we have 
\vskip 1mm

\centerline{$U- a_iV= w_i \hskip 2pt (p_i + M \hskip 1pt \prod_{j\not=i} w_i(\underline x)), \hskip 2mm i=0,\ldots,t$.}
\vskip 1mm

\noindent
Up to changing $U_0$, we may assume that $p_0,\ldots,p_t$ are nonzero.

For each $i=0,\ldots,t$, the polynomials $A_i= p_i$ and $B_i = \prod_{j\not=i} w_i(\underline x)$ are relatively prime in $k(a_0)[\underline x]$. 
Namely if $\pi \in k(a_0)[\underline x]$ is a common irreducible divisor in $k(a_0)[\underline x]$ of these two polynomials, then $\pi$ divides $p_i$ and $\pi$
divides $w_j$ for some $j\not=i$ and hence, $\pi$ is a common divisor of $U_0-a_iV$ and 
$U_0-a_jV$. Therefore $\pi$ divides $V$ and $w_j$, which contradicts the assumption $(V,w_j)=1$.

Set  $R=k(a_0)[x_n]$, $K=k(a_0)(x_n)$, $\underline x=(x_1,\ldots,x_{n-1})$ and, for $\underline d\in (\Nn^\ast)^{n-1}$ and $i=0,\ldots, t$,

\vskip 1mm
\centerline{
$\left\{\begin{matrix}
P_i = A_{i}(\underline x) + B_{i}(\underline x)\hskip 1pt  y \hfill \\
F_i(\lambdabar,\xbar) = A_{i}(\xbar) + B_{i}(\xbar) \left(\sum_{j=0}^{N_{\underline d}} \lambda_j Q_j(\underline x)\right).\\
\end{matrix}\right.$
}
\vskip 1mm

\noindent
As in the proof of Theorem \ref{thm:Main}, 
the Hilbert subset $H_K(F_0,\ldots,F_t)$ contains a separable Hilbert subset 
$H_K(f_0,\ldots,f_t, \varphi)$ with $f_0,\ldots,f_t\in K[\underline \lambda, x]$ of degree $\geq 1$ in $x$ and $\varphi \in K[\underline \lambda]$, $\varphi \not=0$.

The field extension $k(a_0)/k$ is finite and separable. Setting $R_0=k[x_n]$ and $K_0 = k(x_n)$, so is the extension $K/K_0$. 
From \cite[Corollary 12.2.3]{FrJa},
 $H_K(f_0,\ldots,f_t, \varphi)$ contains a separable Hilbert subset ${\mathcal H}_{K_0}$ of $K_0^{N_{\underline d}+1}$.

Proceed as in the proof of Theorem \ref{thm:Main} to conclude that there is an integer $d_0$ with the following property:
if $\delta_1$, $\delta_2,\ldots,\delta_n$ are integers $\geq d_0$, the Hilbert subset ${\mathcal H}_{K_0}$, and so the Hilbert subset $H_K(F_0,\ldots,F_t)$ too, contains a tuple $\underline \lambda^\ast = (\lambda_0^\ast,\ldots,\lambda_{N_{\underline d}}^\ast) \in R_0^{N_{\underline d}+1}$ such that $\lambda_1^\ast$ and $\lambda_2^\ast$ are irreducible in $R_0$, and $\deg_{x_i}(M_{\underline d}(\underline \lambda^\ast,\underline x)) = \delta_{i}$, $i=1,\ldots,n$. Choosing again for $Q_1,Q_2$ monomials of respective degrees $d_1+\cdots + d_{n-1}$ and  $d_1+\cdots + d_{n-1}-1$ and assuming $d_0$ suitably large, we obtain as for Theorem \ref{thm:Main} that each of the polynomials

\vskip 1mm

\centerline{$F_i (\underline x)= A_{i}(\underline x)+ B_{i}(\underline x) M_{\underline d}(\underline \lambda^\ast, \underline x)$}
\vskip 1mm
 
\noindent
is irreducible in $k(a_0)[x_n][x_1,\ldots,x_{n-1}]$, $i=0,\ldots,t$.

Up to increasing $d_0$,  one can further guarantee that $\delta_1,\ldots,\delta_n$ are large enough so that 
$\deg(M_{\underline d}(\underline \lambda^\ast,\underline x)) > \deg(U_0)$ and $F_i$ does not divide $w_i$, $i=1,\ldots,s$.  
The polynomial

\vskip 1mm

\centerline{$U(\underline x)=U_0(\underline x)+ M_{\underline d}(\underline \lambda^\ast, \underline x) \prod_{i=0}^t w_i(\underline x) $}
\vskip 1mm
 
\noindent
is in $k[\underline x]$ and satisfies  the required condition $U- a_i V = w_iH_i$, with $H_i= F_i$ irreducible in $k(a_0)[\underline x]$, $i=0,\ldots,t$. Up to replacing the Hilbert subset $H_K(f_0,\ldots,f_t, \varphi)$ by a Zariski open subset of it, one can also request that $\deg(U-a_0V) =\max(\deg(U),\deg(V))$.
Finally $\deg_{x_i}(U)=\delta_i+ \sum_{j=1}^t \deg_{x_i}(w_j)$ can be taken to be any given suitably large integer $d_i$, $i=1,\ldots,n$. 
\end{proof}

\bibliography{progress}

\begin{thebibliography}{BDN17}

\bibitem[BDN09]{BDN1}
Arnaud Bodin, Pierre D\`ebes, and Salah Najib.
\newblock Irreducibility of hypersurfaces.
\newblock {\em Comm. Algebra}, 37(6):1884--1900, 2009.

\bibitem[BDN17]{BDN3}
Arnaud Bodin, Pierre D\`ebes, and Salah Najib.
\newblock Families of polynomials and their specializations.
\newblock {\em J. Number Theory}, 170:390--408, 2017.

\bibitem[Bod08]{bodin_isrJ}
Arnaud Bodin.
\newblock Reducibility of rational functions in several variables.
\newblock {\em Israel J. Math.}, 164:333--347, 2008.

\bibitem[BS09]{Bary-Soroker_Dirichlet}
Lior Bary-Soroker.
\newblock Dirichlet's theorem for polynomial rings.
\newblock {\em Proc. Amer. Math. Soc.}, 137:73--83, 2009.

\bibitem[BS12]{liorSchinzel}
Lior Bary-Soroker.
\newblock Irreducible values of polynomials.
\newblock {\em Adv. Math.}, 229(2):854--874, 2012.

\bibitem[BW05]{bender-wittenberg}
Andreas~O. Bender and Olivier Wittenberg.
\newblock A potential analogue of {S}chinzel's hypothesis for polynomials with
  coefficients in {${\Bbb F}_q[t]$}.
\newblock {\em Int. Math. Res. Not.}, (36):2237--2248, 2005.

\bibitem[CTS82]{colliot-sansuc}
Jean-Louis Colliot-Th{\'e}l{\`e}ne and Jean-Jacques Sansuc.
\newblock Sur le principe de {H}asse et l'ap\-pro\-xi\-ma\-tion faible, et sur
  une hypoth{\`e}se de {S}chinzel.
\newblock {\em Acta Arith.}, 41:33--53, 1982.

\bibitem[D{\`e}b99]{DeIndian}
Pierre D{\`e}bes.
\newblock Density results for {H}ilbert subsets.
\newblock {\em Indian J. Pure Appl. Math.}, 30(1):109--127, 1999.

\bibitem[FJ]{FrJanew}
Michael~D. Fried and Moshe Jarden.
\newblock {\em Field arithmetic}.
\newblock next edition.

\bibitem[FJ08]{FrJa}
Michael~D. Fried and Moshe Jarden.
\newblock {\em Field arithmetic}.
\newblock Ergebnisse der Mathematik und ihrer Grenzgebiete. A Series of Modern
  Surveys in Mathematics, 11. Springer-Verlag, Berlin, 3rd edition, 2008.
\newblock Revised by Jarden. xxiv+792 pp.

\bibitem[HW16]{harpaz-wittenberg2016}
Yonatan Harpaz and Olivier Wittenberg.
\newblock On the fibration method for zero-cycles and rational points.
\newblock {\em Ann. of Math.}, 183:229--295, 2016.

\bibitem[KL19]{kornblum}
Heinrich Kornblum and Edmund Landau.
\newblock {\"U}ber die {P}rimfunktionen in einer arithmetischen {P}rogression.
\newblock {\em Math. Z.}, 5(1-2):100--111, 1919.

\bibitem[Lan83]{langFG}
Serge Lang.
\newblock {\em Fundamentals of {D}iophantine geometry}.
\newblock Springer-Verlag, New York, 1983.

\bibitem[Naj04]{salah1}
Salah Najib.
\newblock Sur le spectre d'un polyn\^{o}me \`a plusieurs variables.
\newblock {\em Acta Arith.}, 114(2):169--181, 2004.

\bibitem[Naj05]{salah2}
Salah Najib.
\newblock Une g\'{e}n\'{e}ralisation de l'in\'{e}galit\'{e} de
  {S}tein-{L}orenzini.
\newblock {\em J. Algebra}, 292(2):566--573, 2005.

\bibitem[Pol11]{pollack}
Paul Pollack.
\newblock On polynomial rings with a {G}oldbach property.
\newblock {\em Amer. Math. Monthly}, 118(1):71--77, 2011.

\bibitem[Ros02]{rosen}
Michael Rosen.
\newblock {\em Number theory in function fields}, volume 210 of {\em Graduate
  Texts in Mathematics}.
\newblock Springer-Verlag, New York, 2002.

\bibitem[Sch82]{schinzel-book}
Andrzej Schinzel.
\newblock {\em Selected topics on polynomials}.
\newblock University of Michigan Press, Ann Arbor, Mich., 1982.

\bibitem[Sch00]{schinzel-book2000}
Andrzej Schinzel.
\newblock {\em Polynomials with special regard to reducibility}, volume~77 of
  {\em Encyclopedia of Mathematics and its Applications}.
\newblock Cambridge University Press, Cambridge, 2000.
\newblock With an appendix by Umberto Zannier.

\bibitem[SS58]{schinzel}
Andrzej Schinzel and Waclaw Sierpi\'{n}ski.
\newblock Sur certaines hypoth\`eses concernant les nombres premiers.
\newblock {\em Acta Arith.}, 4:185--208; erratum 5 (1958), 259, 1958.

\bibitem[Swa62]{swan}
Richard~G. Swan.
\newblock Factorization of polynomials over finite fields.
\newblock {\em Pacific J. Math.}, 12:1099--1106, 1962.

\bibitem[Uch80]{uchida}
K\^{o}ji Uchida.
\newblock Separably {H}ilbertian fields.
\newblock {\em Kodai Math. J.}, 3(1):83--95, 1980.

\end{thebibliography}
\bibliographystyle{alpha}

\end{document}